\newcommand{\Z}{\mathbb Z}
\newcommand{\SL}{\operatorname{SL}}
\renewcommand{\Re}{\operatorname{Re}}
\renewcommand{\(}{\left\(}
\renewcommand{\)}{\right\)}
\renewcommand{\[}{\left\[}
\renewcommand{\]}{\right\]}
\numberwithin{equation}{section}
 \theoremstyle{plain}
\newtheorem{theorem}{Theorem}[section]
\newtheorem{lemma}[theorem]{Lemma}
\newtheorem*{proposition*}{Proposition}
\newtheorem*{theorem*}{Theorem}
\newtheorem{defn}[theorem]{Definition}
\newtheorem{corollary}[theorem]{Corollary}
\newtheorem{remark}[theorem]{Remark}
\newtheorem{proposition}[theorem]{Proposition}
\def\proof{\@ifnextchar[{\@oproof}{\@nproof}}
\def\@oproof[#1][#2]{\trivlist\item[\hskip\labelsep\textit{#2 Proof of\
#1.}~]\ignorespaces}
\def\@nproof{\trivlist\item[\hskip\labelsep\textit{Proof.}~]\ignorespaces}
\title[Summation formulas for mock modular forms]{Summation formulas for Hurwitz class numbers and other mock modular coefficients}
\author{Olivia Beckwith}\address{Department of Mathematics,
Tulane University, New Orleans, LA 70118}
\email{obeckwith@tulane.edu}
\author{Nikolaos Diamantis}
\address{Room C22 The Mathematical Sciences Building, University Park,
Nottingham,
NG7 2RD,
UK }
\email{nikolaos.diamantis@nottingham.ac.uk}
\author{Rajat Gupta}
\address{University of Maine, USA,
Room 321, Neville Hall}
\email{rajat.gupta@maine.edu}
\author{Larry Rolen}
\address{Department of Mathematics,
1420 Stevenson Center,
Vanderbilt University,
Nashville, TN 37240 }
\email{larry.rolen@vanderbilt.edu }
\author{Kalani Thalagoda}\address{Department of Mathematics,
Tulane University, New Orleans, LA 70118}
\email{kthalagoda@tulane.edu}
\date{\today}
\begin{document}

\begin{abstract}
We prove a formula for weighted sums of the first $n$ coefficients of mock modular forms of moderate growth and apply it to Hurwitz class numbers and coefficients of negative half integral weight Eisenstein series, which take the form of certain quadratic Dirichlet $L$-values. Our formula is a mock modular version of a Bessel-sum identity proved by Chandrasekharan and Narasimhan for Dirichlet series satisfying a functional equation. Our proof utilizes $L$-functions for mock modular Eisenstein series defined by Shankadhar and Singh. 
\end{abstract}

\maketitle

\section{Introduction}
In this paper, we develop a new theory of summation formulas for a class of distinguished mock modular forms. To do so, we use ideas inspired by insights from an old paper of Chandrasekharan and Narasimhan \cite{ChandNaras61} and combine them with the modern toolkit of harmonic Maass forms. We will describe the general framework in Section \ref{Section1.2}. Before this, we begin by describing two key applications of the general results which formed the primary motivation for writing this paper. 

\subsection{Motivating Example: Class Numbers}
Class numbers of imaginary quadratic fields have long been of central interest in number theory. First studied by Gauss through the lens of binary quadratic forms under the action of $\mathrm{SL}_2(\mathbb Z)$, the class number $h(-D)$
quantifies the failure of the rings of integers in $\mathbb Q(\sqrt{-D})$ to have unique factorization. Gauss conjectured that $h(-D)\rightarrow \infty$ as $D\rightarrow \infty$. 
Gauss's conjecture was proven in general by Heilbronn \cite{Heilbronn}, which Watkins used to compute exact lists of discriminants with class numbers up to 100 \cite{Watkins}.

As a sequence, 
the class numbers $h(D)$ and variations of them, such as the Hurwitz class numbers $H(n)$ (Sect. \ref{Hurwitz}) have many interesting arithmetic and analytic properties.  For example, they satisfy many beautiful recurrence formulas such as that of Hurwitz and Kronecker:
\begin{equation}
\label{HurwitzKronecker}
\sum_{r\in\mathbb Z}H(4n-r^2)+\lambda_1(n)=2\sigma_1(n).
\end{equation}
Here 
$\lambda_1(n):=\sum_{d|n}\min(d,n/d)$, and $\sigma_k(n):=\sum_{d|n}d^k$.

A natural question, of central interest in analytic number theory, is to ask for good bounds on class numbers. That is, how precise can one make Heilbronn's theorem? In \cite{Siegel}, Siegel gave a celebrated first answer  to this question by proving the ineffective bound
$
h(-D)\geq c(\varepsilon) D^{\frac12-\varepsilon}$ (for $D >0$ sufficiently large).
On the other hand, 
Goldfeld, Gross, and Zagier \cite{GoldfeldGrossZagier} proved the effective bound:
$$
h(-D) > \frac{1}{7000}\left(\log D\right) \prod_{\substack{p\mid D\\ p\neq D}} \left(
1-\frac{[2\sqrt{p}]}{p+1}\right).
$$
As one can see, there is a large gap between what is asymptotically true and what can be proven effectively. Thus, as is typical in analytic number theory, many papers have studied asymptotics of sums and moments of class numbers. 
For example, Wolke \cite{Wolke} proved that for all $\alpha, \varepsilon>0,$ there is a constant $c(\alpha)$ such that
$$
\sum_{n \le X} h(-n)^{\alpha} = c(\alpha) X^{1 + \frac{\alpha}{2}} + O_{\alpha}(X^{\frac{3}{4} + \frac{\alpha}{2}+\varepsilon}).
$$
Another important recent result is due to Walker \cite{Walker-2024}, who proved that
$$
\sum_{n\leq X}H(n)H(n+\ell)=\frac{\pi^2X^2}{252\zeta(3)}\left(2\sigma_{-2}(\ell/4)-\sigma_{-2}(\ell/2)+\sigma_{-2}(\ell_o)\right)+O\left(X^{\frac53+\varepsilon}+X^{1+\varepsilon}\ell\right),
$$
for any $\varepsilon>0$ $\ell\in \mathbb N$, and where $\ell_o$ denotes the odd part of $\ell$. To give one more example, Ono, Saad, and Saikia \cite{OnoSaadSaikia} used formulas and asymptotic for moments of the form
$
\sum_{s} H\left(\frac{n - 4s^2}{4}\right) s^{2m}
$
for non-negative integers $m$
to study the distribution of values of Gaussian hypergeometric functions.

This paper proves new analytic estimates for sums of class numbers by studying them from a mock modular standpoint.
A hint of the modularity properties of $H(n)$ is provided by \eqref{HurwitzKronecker}; the right-hand side is essentially the $n$-th Fourier coefficient of the weight $2$ (quasi)modular Eisenstein series $E_2(\tau)$
while, in the left-hand side, we recognize a convolution of the generating function $\mathcal H^+(\tau):=\sum_{D\geq0}H(D)$ with the weight $1/2$ Jacobi theta function $\vartheta(\tau):=\sum_{n\in\Z}q^{n^2}$.
However, the modular objects involved are more complicated than classical holomorphic modular forms:
50 years ago, Zagier \cite{zagier75} proved that $\mathcal H^+(\tau)$  can be ``completed'' by adding a non-holomorphic integral of $\vartheta(\tau)$ to 
a function which transforms as a weight $3/2$ modular form. 
In modern language, 
the Hurwitz class number generating function is a {\it mock modular form}, and it can be completed to give a {\it harmonic Maass form} (see Sect. \ref{PolyGrowthHMFs}).


Here, we exploit this interpretation to study sums of coefficients of distinguished mock modular forms such as $\mathcal H^+(\tau)$. In particular, our main applications to class numbers can be found below in Theorems~\ref{Prof82} and \ref{applHur}.
For example, Theorem~\ref{applHur} states the following.
\begin{theorem*} \label{thm:intro} Let $H(n)$ denote the Hurwitz class number.
For $\rho>1$ and $\epsilon > 0$, we have
\begin{align}\label{IntroAs}
&\frac{1}{\Gamma(\rho+1)}\sum_{n\leq x}H(n)(x-n)^{\rho} 
=\frac{ \pi^{\frac{3}{2}} x^{\rho + \frac{3}{2}}  }{24 \Gamma \left (\rho + \frac{5}{2} \right )}
+ O(x^{\rho+1 + \epsilon}).
\end{align}    
\end{theorem*}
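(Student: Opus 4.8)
The plan is to express the left-hand side of \eqref{IntroAs} as a Mellin--Barnes integral against the $L$-function
\begin{equation*}
L(\mathcal{H}^+,s):=\sum_{n\geq 1}\frac{H(n)}{n^{s}},
\end{equation*}
the $L$-function attached to the mock modular form $\mathcal{H}^+$ in the sense of Shankadhar and Singh and of the general framework of Section \ref{Section1.2}, and then to extract the asymptotic by shifting the contour past its rightmost pole. Since $H(0)=-\tfrac1{12}$ contributes only $-\tfrac{1}{12\,\Gamma(\rho+1)}x^{\rho}=O(x^{\rho})$, the standard Riesz-mean identity gives, for any $c>\tfrac32$,
\begin{equation*}
\frac{1}{\Gamma(\rho+1)}\sum_{n\leq x}H(n)(x-n)^{\rho}
=\frac{1}{2\pi i}\int_{(c)}\frac{\Gamma(s)}{\Gamma(s+\rho+1)}\,L(\mathcal{H}^+,s)\,x^{s+\rho}\,ds+O(x^{\rho}),
\end{equation*}
the Dirichlet series converging absolutely on the line of integration because $H(n)\ll n^{1/2+\epsilon}$.

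Next I would bring in the analytic input on $L(\mathcal{H}^+,s)$ supplied by the theory of $L$-functions of mock modular forms: meromorphic continuation to $\mathbb{C}$; a functional equation exchanging $s$ with $\tfrac32-s$ in which, because $\mathcal{H}^+$ is only mock modular, the holomorphic Eichler integral of the shadow $\vartheta$ of the harmonic Maass form $\widehat{\mathcal{H}}$ contributes additional, explicitly controlled terms; polynomial growth in vertical strips (Phragm\'en--Lindel\"of for the completed $L$-function, using absolute convergence to the right of the critical strip); and the location of the poles --- the unique pole with $\Re(s)>1$ is a simple pole at $s=\tfrac32$, with residue $\tfrac{\pi}{12}$, arising from the constant term of Zagier's weight $3/2$ Eisenstein series, while any further singularities (e.g.\ a subsidiary pole at $s=1$ and one at $s=\tfrac12$ coming from the shadow) lie to the left of the contour used below.

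I would then push the line of integration to $\Re(s)=1+\epsilon$. Only the pole at $s=\tfrac32$ is crossed, and it contributes
\begin{equation*}
\frac{\Gamma\!\left(\tfrac32\right)}{\Gamma\!\left(\rho+\tfrac52\right)}\cdot\frac{\pi}{12}\cdot x^{\rho+\frac32}
=\frac{\pi^{3/2}}{24\,\Gamma\!\left(\rho+\tfrac52\right)}\,x^{\rho+\frac32},
\end{equation*}
which is exactly the main term of \eqref{IntroAs}. On the new line Stirling's formula gives $\Gamma(s)/\Gamma(s+\rho+1)\ll_{\rho}(1+|s|)^{-\rho-1}$, so for $\rho>1$ the polynomial bound for $L(\mathcal{H}^+,s)$ makes the remaining integral absolutely convergent and $O(x^{\rho+1+\epsilon})$. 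The extra terms produced by the non-holomorphic part of the functional equation are handled the same way: they are governed by $\vartheta$, whose coefficients are $O(1)$ and whose Dirichlet series is essentially $2\zeta(2s)$, with only the pole at $s=\tfrac12$, so those contributions are $O(x^{\rho+\frac12})$ and are absorbed into the error. Equivalently, in the Chandrasekharan--Narasimhan-type language of Section \ref{Section1.2}, the entire ``dual'' side is a Bessel series in the $\vartheta$-coefficients which for $\rho>1$ converges and is $o(x^{\rho})$.

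I expect the real work to lie in the treatment of the mock (non-holomorphic) contributions: establishing --- or extracting from the general theory --- the precise functional equation for $L(\mathcal{H}^+,s)$ with the shadow $\vartheta$ inserted, and verifying that the ensuing correction terms are of genuinely lower order $O(x^{\rho+1+\epsilon})$ rather than polluting the main term. A secondary point is to pin down the residue at $s=\tfrac32$ (equivalently, the constant in $\sum_{n\leq X}H(n)\sim\tfrac{\pi}{18}X^{3/2}$) and to confirm there is no pole in the strip $1<\Re(s)<\tfrac32$, so that this single residue produces the complete main term. The restriction $\rho>1$ is precisely what makes the contour-shift estimate and the dual Bessel series converge comfortably; reaching smaller $\rho$, and in particular ordinary partial sums, would need subconvex or pointwise input beyond what is required here.
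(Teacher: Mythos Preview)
Your approach is correct and matches the paper's: both express the Riesz mean via Perron's formula, invoke the Shankhadhar--Singh functional equation for the completed $L$-function $\Lambda(\mathcal{H},s)$ (Theorem~\ref{thm:ShankadharSingh}), shift the contour past the pole at $s=\tfrac32$ (your residue $\pi/12$ for $L^+$ agrees with the paper's $b^+(0)$-contribution to $Q_\rho$), and control the remainder via Phragm\'en--Lindel\"of. The only tactical difference is that the paper shifts all the way to $\Re(s)=k-\alpha$ to obtain the exact Bessel-sum identity of Theorems~\ref{thm:summationformula} and \ref{Prof82} and then bounds those sums (Theorem~\ref{th:SummationAsymptotic}), whereas you stop at $\Re(s)=1+\epsilon$ and estimate the integral directly; one point to correct is that the dominant dual Bessel series is in the coefficients $b^+(n)$ of $\mathcal{H}|w_4$, which by Lemma~\ref{H} are combinations of $H(n)$ and $L(\chi_{-n},1)$, not $\vartheta$-coefficients --- only the subsidiary $b^-(n)$-series and the $g_\rho$-sum on the left (bounded via Lemma~\ref{suppl}) come from the shadow.
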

This theorem corresponds to the case of weight $3/2$, and, in Section \ref{neg1/2}, we prove its counterpart for {\it positive} half-integral weights (Theorem \ref{th:AsymShimEis}). Instead of Hurwitz class numbers, this involves explicit products of special vales of Dirichlet $L$-functions, which can be thought of as versions of the generalized Hurwitz class numbers (cf. \cite{beckwith-mono}, \cite{mono}).

In the next section, we will give an overview of the more general version of our results, and of the context within the theory of mock modular forms.

\subsection{Summation formulas for mock modular forms}\label{Section1.2}
The purpose of this paper is to prove a new class of ``summation formulas,'' from which the last theorem follows. 
Summation formulas are now essential tools in analytic number theory (see \cite{MS2004} for a beautiful survey of their history and connection to modularity).


An early example of such a formula is Voronoi's enhancement of Dirichlet's estimate for the number of divisors $d(n):=\sigma_0(n)$
with the inclusion of {\it test functions} $f(x)$:
\begin{equation}
\label{Voronoi}
\sum_{a\leq n\leq b}d(n)f(n)=\int_a^bf(x)(\log x+2\gamma)dx+\sum_{n\geq1}
d(n)\int_a^bf(x)\left(4K_0(4\pi \sqrt{nx})-2\pi Y_0(4\pi\sqrt{nx})\right)dx.
\end{equation}
Here, $\gamma$ is the Euler-Mascheroni constant, $Y_0$ and $K_0$ are the standard Bessel functions and the summation on the left-hand side is understood to be 
an average of the left and right-handed limits at points where $f(n)$ is discontinuous.
The added flexibility allowed by the test functions in this summation formula leads to
power savings over Dirichlet's estimate.

Since then, summation formulas have been the focus of intense research from a variety of viewpoints one of the most important of which 
is based on the language of modular forms. However, as yet, they have not played a significant role in the theory of { \it mock} modular forms and harmonic Maass forms, and, here, we aim to rectify this. 
The systematic theory of mock modular forms was initiated by 
Zwegers \cite{zwegers}, who solved an 80-year-old mystery about the modularity of Ramanujan's mock theta functions. Important milestones in the development of the subject included Bruinier--Funke's  general theory of harmonic Maass forms \cite{brufu02}, Bringmann--Ono's application of the theory for the proof of the Andrews–-Dragonette conjecture \cite{BringmannOno}, Bruinier--Ono's applications to the vanishing of central $L$-values and $L$-derivatives of rational elliptic curves \cite{BruO10}, Duke--Imamo\u{g}lu--T\'oth's deep
connections to real quadratic class numbers \cite{DIT} etc.

Although these forms have many of the features of classical holomorphic and Maass wave forms such as
a robust theory of differential and Hecke operators,
some of the key features of classical modular forms were missing until recently. This included a systematic theory of $L$-series, which was significant because in the classical setting such $L$-series underlie most of the fundamental results. 
The first versions of $L$-series for these forms were given by Bringmann,
Fricke, and Kent \cite{BFK14} (in the weakly holomorphic case) and 
by Shankhadhar and Singh \cite{ShanSingh22} (for some non-holomorphic harmonic Maass forms). 
The first completely general definition of $L$-series for harmonic Maass forms was given by Lee, Raji, and two of the authors in \cite{DLRR}. The approach adopted there used test functions and allowed for several of the familiar features of $L$-series such as  
functional equations, Weil-type converse theorems for harmonic Maass forms and a first instance of a summation formula. 

The aim of this work is to study specifically the last aspect, namely summation formulas, with a view towards arithmetic applications  of the same nature as those derived by their classical counterparts outlined in the last section. To this end, we focus on the simplest case of 
{\it harmonic Maass forms of polynomial growth} (see Definition \ref{def:hmfpg}). On the one hand, this space is small enough to allow for a clear understanding of the mechanism of summation formulas and, on the other, it includes objects of clear arithmetic importance, such as the generating function of Hurwitz class numbers mentioned above. 

The main summation formula will be stated below in Theorem \ref{thm:summationformula}, but here we give the following specialization to the Hurwitz class numbers case that implies \eqref{IntroAs}:
\begin{theorem*} Let $H(n)$ denote the Hurwitz class number, $\chi_{-n}$ the character associated with $\mathbb Q(\sqrt{-n})$ and let $b^+(n)$ be a certain explicit linear combination of $H(n)$ and $L(\chi_{-n}, 1)$
(see Theorem \ref{Prof82}). Then, for $\rho>0$. 
\begin{align*}
&\frac{1}{\Gamma(\rho+1)}\sum_{n\leq x}H(n)(x-n)^{\rho} - \frac{1}{4  } \sum_{n \le \sqrt{x}} n (x+ n^2)^{\rho} \left( \frac{2}{\Gamma(\rho+1)} + \frac{1}{ \sqrt{\pi} \Gamma(\rho + \frac{3}{2} )} B \left(  \frac{2}{x+1}, - \frac{1}{2}, \rho + \frac{3}{2} \right) \right) \nonumber
\\&-x^{\rho} \left ( \frac{1}{12 \Gamma(\rho+1)}+\frac{3xi^{\frac{3}{2}}(1+i)}{16 \Gamma(\rho+2)}-\frac{\sqrt{2 \pi x}}{8 \pi \Gamma(\rho+3/2)}-\frac{(1+i)(\pi i x)^{\frac{3}{2}}}{12\sqrt{8} \Gamma(\rho+5/2)}\right ) \nonumber \\
&=  \frac{-i^{\frac{3}{2}} (1+i)x^{\frac{2\rho+3}{4}}}{2 \sqrt{2}\pi^{\rho}}\sum_{n \ge 1} \frac{b^+(n)}{n^{\frac{2\rho+3}{2}}}J_{\rho+\frac{3}{2}}\left(2\pi\sqrt{nx}\right) \nonumber 
\\
&-
\frac{i^{\frac{3}{2}}(1+i)x^{\frac{\rho+1}{2}}}{4 \sqrt{2}\pi^{\rho+1}}\sum_{n \ge 1}\frac{1}{n^{\rho+1}}
\int_{0}^{1/2}\frac{u^{\frac{\rho}{2}}}{(1-u)^{\frac{1+\rho}{2}}
}J_{\rho+1}\left(2\pi n \sqrt{\frac{x(1-u)}{u}}\right)du
\end{align*}
where $B(a, b, c)$ is the incomplete Beta function and $J_\nu(x)$ the usual $J$-Bessel function.
\end{theorem*}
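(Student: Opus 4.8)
The plan is to obtain this statement by specializing the main summation formula, Theorem~\ref{thm:summationformula}, to the completed Hurwitz class number form. First I would recall the explicit harmonic Maass form $\widehat{\mathcal H}$ of weight $3/2$ on $\Gamma_0(4)$ whose holomorphic part is $\mathcal H^+(\tau)=\sum_{n\ge0}H(n)q^n$ (with $H(0)=-\tfrac1{12}$) and whose shadow is a constant multiple of $\vartheta(\tau)=\sum_{n\in\Z}q^{n^2}$; by Zagier's completion (recalled in Section~\ref{Hurwitz}) its non-holomorphic part consists of a term indexed by the squares $n^2$, with coefficients proportional to $n$ times an incomplete gamma factor, together with a single term proportional to $y^{-1/2}$. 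I would then assemble the input data that Theorem~\ref{thm:summationformula} requires: the weight $k=3/2$; the Fricke involution $\tau\mapsto -1/(4\tau)$ together with its half-integral multiplier, which is the source of the factors $i^{3/2}$, $(1+i)$ and the powers of $2$ and $\pi$ appearing throughout the final identity; the behavior of $\widehat{\mathcal H}$ at the cusps; and the $L$-series of $\widehat{\mathcal H}$ in the sense of \cite{ShanSingh22} and \cite{DLRR}, which packages $\sum_{n\ge1}H(n)n^{-s}$ together with the Mellin transform of the non-holomorphic piece.

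Feeding this into Theorem~\ref{thm:summationformula}, the term $\tfrac1{\Gamma(\rho+1)}\sum_{n\le x}H(n)(x-n)^\rho$ is the Riesz-type sum attached to the holomorphic coefficients, and its image under the functional equation is the first Bessel series on the right-hand side, whose coefficients are precisely the $b^+(n)$ built from $H(n)$ and $L(\chi_{-n},1)$ as in Theorem~\ref{Prof82}; here one uses the classical evaluation of $\sum_{n\ge1}H(n)n^{-s}$ in terms of the Riemann zeta function and quadratic Dirichlet $L$-values, which is exactly what forces $L(\chi_{-n},1)$ to enter $b^+(n)$. The non-holomorphic part of $\widehat{\mathcal H}$ contributes two further pieces: its square-supported coefficients produce the finite sum $-\tfrac14\sum_{n\le\sqrt x}n(x+n^2)^\rho\big(\cdots\big)$, in which the incomplete Beta function $B\big(\tfrac2{x+1},-\tfrac12,\rho+\tfrac32\big)$ encodes the incomplete gamma factor after integration against the kernel $(x-t)^\rho$, while its dual is the remaining Bessel integral $\int_0^{1/2}(\cdots)\,du$ on the right. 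Finally, the elementary $x^\rho\big(\cdots\big)$ term collects the residue contributions: from the constant term $-\tfrac1{12}$ of $\mathcal H^+$, from the $y^{-1/2}$-type term, and from the poles of the completed $L$-function, each evaluated by a standard residue computation in the Perron-type contour integral underlying Theorem~\ref{thm:summationformula}.

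The main obstacle will be bookkeeping rather than anything conceptual: correctly normalizing the non-holomorphic Eichler integral of $\vartheta$ and then tracking the resulting constants — the $i^{3/2}$, the $(1+i)$, the powers of $2$ and $\pi$, and the various $\Gamma$-quotients — through the Fricke transformation and the Mellin--Perron inversion, since a single misplaced factor corrupts every line of the identity. A secondary issue needing care is that, unlike in the classical Chandrasekharan--Narasimhan situation, the relevant functional equation is that of a harmonic Maass form, so one must justify the convergence of the rearranged series and the legitimacy of termwise Mellin inversion for $\rho>0$; this is where the polynomial-growth hypothesis of Definition~\ref{def:hmfpg}, together with the known growth of $H(n)$ and of the incomplete-gamma coefficients of $\widehat{\mathcal H}$, is used.
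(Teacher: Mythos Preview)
Your overall strategy is the paper's: specialize Theorem~\ref{thm:summationformula} to Zagier's completed form $\mathcal H$ with $k=3/2$, $N=4$, compute the Fricke transform $\mathcal H|w_4$, read off $a^{\pm}(n)$ and $b^{\pm}(n)$, and identify each piece of the formula. Two points need correction.

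First, the source of $L(\chi_{-n},1)$ in $b^+(n)$ is not ``the classical evaluation of $\sum H(n)n^{-s}$.'' The $b^+(n)$ are by definition the holomorphic Fourier coefficients of $g=\mathcal H|w_4$, and the paper computes these by writing $\mathcal H$ via Hirzebruch--Zagier as a combination of the Eisenstein series $E_{3/2,0}$ and $F_{3/2,0}$ (Lemma~\ref{H}), then quoting the known Fourier expansion of $E_{3/2,0}$ from \cite{HZ,Zens}, whose coefficients are explicit multiples of $L(\chi_{-n},1)$. No Dirichlet-series identity for $\sum H(n)n^{-s}$ is invoked.

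Second, and more seriously, you underestimate what is needed for the range $\rho>0$. With $\mu_f^{\pm}=\mu_g^{\pm}\approx 1/2$ and $k=3/2$, Theorem~\ref{thm:summationformula} only yields the identity for $\rho>\rho_0-\tfrac12\approx 1$. Polynomial growth of $\mathcal H$ and the trivial bound on $H(n)$ are already built into that threshold and do not push it lower. To reach $\rho>0$ the paper invokes Theorem~\ref{thm:extendedsummation}, which is specific to $k=3/2$ and relies on two nontrivial ingredients: the Serre--Stark basis theorem (forcing $a^-(n),b^-(n)$ to be supported on squares, see Remark~\ref{SeSt}), and the Chandrasekharan--Narasimhan equiconvergence machinery (Theorem~II of \cite{ChandNaras61}) to handle the conditionally convergent $F_{\rho+k}$-series. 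Your last paragraph should acknowledge that this extension is a separate theorem, not a routine convergence check.
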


An important feature of our method is that it uses the old approach of \cite{ChandNaras61} on arithmetical identities derived from functional equations. We further use some of their results on trigonometric series to enlarge the range of validity of our summation formula in the important special case of weight $3/2$ (see Theorem \ref{thm:extendedsummation}).

The remainder of the paper is organized as follows. In Section~\ref{ComplexAnalysisBackground}, we establish the complex analytic 
prerequisites needed in the sequel, including new results for some key special functions. 
In Section~\ref{PolyGrowthHMFs} we describe the basic facts and notation for harmonic Maass forms and their $L$-series. We then prove our main summation formula in Section~\ref{SummationSection}. In Section~\ref{extendingrange}, we extend this result to a larger range. These formulas are analyzed to give simpler asymptotic estimates for sums of coefficients of our harmonic Maass forms in Section~\ref{sec:AsymptoticAnalysis}. We apply this to the particular examples such as class numbers and Cohen--Eisenstein series in Section~\ref{application}. Finally, we conclude in Section~\ref{Converse} by showing that, in analogy with the results of Chandrasekharan--Narasimhan, our summation formulas imply functional equations, and hence modularity, of sequences of purported coefficients of harmonic Maass forms.

\section*{Acknowledgments}
The authors are grateful to Andreas Mono for many helpful comments on an earlier version of this manuscript. The first author was supported by NSF grant DMS-2401356 and Simons grant \#953473. The third author was supported by an AMS-Simons Travel Grant. The fourth author was supported by a grant from the Simons Foundation (853830, LR).


\section{Analytic preliminaries and some special functions}\label{ComplexAnalysisBackground}
\subsection{Preliminaries}
We fix the notation $z=x+iy$ for $z \in \mathbb C.$
We will use the following formulation of the Phragm\'en-Lindel\"of principle. 
\begin{proposition}\label{thm:pl} [\protect{\cite[\S 5.65]{Titch}}]
    Suppose that $f(z)$ is continuous in the two half-strips $S$ defined by $a \le x \le b$, $|y|>\eta$, analytic in the interior of $S$ and that, for all $\epsilon>0$, we have $f(z)=O(e^{\epsilon |y|})$ in $S$.    
    Then, if $f(a+iy)=O(|y|^{k_1})$ and $f(b+iy)=O(|y|^{k_2})$, for $|y|>\eta$, we have 
 $$f(x+iy)=O\left (|y|^{\frac{k_1-k_2}{a-b}x+\frac{k_2a-k_1b}{a-b}} \right), \qquad \text{for $|y|>\eta$,}$$ uniformly for $x \in [a, b]$.
\end{proposition}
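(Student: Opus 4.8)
The plan is to reconstruct the classical proof of this half-strip Phragm\'en--Lindel\"of principle: divide $f$ by an auxiliary \emph{weight} that exactly absorbs the boundary growth on the two vertical sides, and then run a maximum modulus argument along the (non-compact) half-strip. By the symmetry $y \mapsto -y$ it suffices to treat the upper half-strip, and since the conclusion concerns the growth as $|y| \to \infty$, I would fix some $Y_0 > \eta$ and prove $|f(x+iy)| = O(y^{l(x)})$ uniformly on $R := \{\,a \le x \le b,\ y \ge Y_0\,\}$, where $l(x) := \frac{k_1-k_2}{a-b}x + \frac{k_2 a - k_1 b}{a-b}$ is the affine function with $l(a) = k_1$ and $l(b) = k_2$. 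Extending $l$ to $l(z) = cz + d$ with $c = \frac{k_1-k_2}{a-b}$ and $d = \frac{k_2 a - k_1 b}{a-b}$ real (so that $l(x+iy) = l(x) + icy$), I would set $g(z) := f(z)\,(-iz)^{-l(z)} = f(z)\exp\big(-l(z)\operatorname{Log}(-iz)\big)$ with $\operatorname{Log}$ the principal branch; this makes sense since $\Re(-iz) = y > 0$ on $R$, so $g$ is holomorphic in the interior of $R$ and continuous on $R$.

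The key computation is $\Re\big(l(z)\operatorname{Log}(-iz)\big) = l(x)\cdot\tfrac12\log(x^2+y^2) - c\,y\arg(y-ix)$. Here $|y\arg(y-ix)| = y\,|\arctan(x/y)| \le |x| \le \max(|a|,|b|)$ and $\tfrac12\log(x^2+y^2) = \log y + O(y^{-2})$, so this real part equals $l(x)\log y + O(1)$ uniformly on $R$; consequently $|(-iz)^{-l(z)}| \asymp y^{-l(x)}$ with constants depending only on $a, b, k_1, k_2, Y_0$. From this I would read off three facts: (i) on the side $x = a$, the hypothesis $f(a+iy) = O(y^{k_1})$ together with $l(a) = k_1$ forces $g(z) = O(1)$, and similarly $g(z) = O(1)$ on $x = b$; (ii) $f$ is continuous on the compact segment $\{a \le x \le b,\ y = Y_0\}$ and the weight is bounded there, so $g$ is bounded on that segment --- hence $M := \sup_{\partial R}|g| < \infty$; (iii) in the interior, absorbing the polynomially bounded weight into the bound $f = O(e^{\epsilon y})$ gives $g(z) = O_\epsilon(e^{\epsilon y})$ for every $\epsilon > 0$.

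Finally I would close with a maximum modulus argument along $R$. For $\delta > 0$ put $h_\delta(z) := g(z)\,e^{i\delta z}$; since $|e^{i\delta z}| = e^{-\delta y} \le 1$ on $R$ we get $|h_\delta| \le M$ on $\partial R$, and since $|h_\delta(z)| = |g(z)|\,e^{-\delta y} = O_\epsilon(e^{(\epsilon - \delta)y})$, taking $\epsilon = \delta/2$ shows $h_\delta(z) \to 0$ as $y \to \infty$. Applying the maximum modulus principle on the rectangles $\{a \le x \le b,\ Y_0 \le y \le T\}$ and letting $T \to \infty$ yields $|h_\delta| \le M$ throughout $R$, hence $|g(z)| \le M e^{\delta y}$; letting $\delta \to 0^+$ gives $|g| \le M$ on $R$. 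Therefore $|f(x+iy)| = |g(z)|\,|(-iz)^{l(z)}| \le M\,|(-iz)^{l(z)}| = O(y^{l(x)})$ uniformly on $R$, which is the asserted bound; the lower half-strip follows verbatim with $-iz$ replaced by $iz$ (which has positive real part there) and $e^{i\delta z}$ by $e^{-i\delta z}$. I expect the main obstacle to be designing the auxiliary weight with its complex linear exponent so that it reduces to exactly $y^{-k_1}$ and $y^{-k_2}$ on the two vertical edges while its spurious phase factor $y\arg(y \mp ix)$ stays uniformly bounded --- it is precisely this that forces the conclusion to carry the interpolated exponent $l(x)$; the non-compactness of the strip is the secondary point, handled routinely by the damping factor $e^{\pm i\delta z}$ and the limits $T \to \infty$, $\delta \to 0^+$.
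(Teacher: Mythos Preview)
Your proof is correct and is essentially the classical argument found in Titchmarsh. Note that the paper does not supply its own proof of this proposition: it is quoted as a preliminary result with a direct citation to \cite[\S 5.65]{Titch}, so there is nothing in the paper to compare your argument against beyond that reference.
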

 We will also use Stirling’s formula for the Gamma function.
\begin{proposition}\label{thm:stirling}[\protect{\cite[(5.11.9)]{DLMF}}]
For any $\sigma \in \mathbb{R}$, as $|t| \to \infty$ we have
\begin{align}\label{thm:st}
\Gamma(\sigma+it)\sim |t|^{\sigma-1/2}e^{-\pi|t|/2}
\end{align}
uniformly for bounded real values of $\sigma.$
\end{proposition}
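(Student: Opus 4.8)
Since the left-hand side is complex while the right-hand side is a positive real, the symbol $\sim$ here should be read as ``equal in modulus up to a multiplicative factor tending to $1$'' (in fact with the sharp constant $\sqrt{2\pi}$), and in this form the statement is precisely \cite[(5.11.9)]{DLMF}; the plan is to indicate how it drops out of the leading term of Stirling's series. I would start from the asymptotic $\Gamma(z)\sim\sqrt{2\pi}\,z^{z-1/2}e^{-z}$, valid as $|z|\to\infty$ uniformly in each sector $|\arg z|\le\pi-\delta$. For $z=\sigma+it$ with $\sigma$ confined to a fixed bounded interval and $|t|\ge\eta$, the point $z$ remains in such a sector (and $\arg z\to\pm\pi/2$); moreover $\Gamma(\overline z)=\overline{\Gamma(z)}$, so it suffices to treat $t>0$.

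Next I would pass to moduli. Writing $z=re^{i\theta}$ with $r=\sqrt{\sigma^2+t^2}$ and $\theta=\arg z\in(0,\pi/2)$, the identity $z^{z-1/2}e^{-z}=\exp\bigl((z-\tfrac12)(\log r+i\theta)-z\bigr)$ gives
\begin{equation*}
\bigl|z^{z-1/2}e^{-z}\bigr|=\exp\bigl((\sigma-\tfrac12)\log r-t\theta-\sigma\bigr).
\end{equation*}
I would then expand each ingredient in powers of $1/t$, uniformly for $|\sigma|\le M$: from $r=t(1+\sigma^2/t^2)^{1/2}$ one gets $\log r=\log t+O(t^{-2})$, and from $\theta=\tfrac{\pi}{2}-\arctan(\sigma/t)$ one gets $t\theta=\tfrac{\pi t}{2}-\sigma+O(t^{-2})$. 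Substituting, the exponent equals $(\sigma-\tfrac12)\log t-\tfrac{\pi t}{2}+O(t^{-2})$, whence
\begin{equation*}
|\Gamma(\sigma+it)|=\sqrt{2\pi}\,|t|^{\sigma-1/2}e^{-\pi|t|/2}\bigl(1+o(1)\bigr),\qquad |t|\to\infty,
\end{equation*}
uniformly for bounded $\sigma$, which is the assertion.

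The only point requiring care --- and it is not a genuine obstacle --- is the uniformity in $\sigma$: one must know that the relative error in Stirling's expansion is uniform on $\{\sigma+it:|\sigma|\le M,\ |t|\ge\eta\}$, which holds because that region lies in a fixed sector bounded away from the negative real axis, and that the implied constants in the elementary estimates for $\log(1+\sigma^2/t^2)$ and $\arctan(\sigma/t)$ depend only on $M$; both are routine. The purpose of recording the proposition is its later use, in tandem with Phragm\'en--Lindel\"of (Proposition \ref{thm:pl}), as an input for controlling the growth of the relevant $L$-series in vertical strips.
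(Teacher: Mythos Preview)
Your derivation is correct and is the standard way to extract the vertical-line asymptotic from Stirling's series. Note, however, that the paper does not supply its own proof of this proposition at all: it is simply quoted from \cite[(5.11.9)]{DLMF} as a known result and used as a black box thereafter. So there is nothing to compare against; your argument correctly fills in what the paper leaves to the reference.
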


Our method generalizes the method in \cite{ChandNaras61}, which is based on the following Perron formula.
\begin{proposition}\label{thm:perron}[\cite{ChandNaras61}, Lemma 1]
If  
$\varphi(s) = \sum_{n=1}^{\infty}a_n\lambda_n^{-s}$ with $\sum_{n=1}^{\infty}|a_n|\lambda_n^{-\alpha}<\infty$  for some $\alpha$ then for $\rho\geq 0, \sigma>0,$ and $\sigma\geq \alpha$ we have
$$ \frac{1}{\Gamma(\rho+1)} \sum{\vphantom{\sum}}'_{\lambda_n\leq x} a_n(x-\lambda_n)^{\rho} = \frac{1}{2\pi i}\int_{\sigma-i\infty}^{\sigma + i\infty} \frac{\Gamma(s)\varphi(s)x^{s+\rho}}{\Gamma(\rho+1+s)} ds,$$
where prime notation is indicating that the last term of the sum is multiplied by $1/2$ if $\rho=0$ and $x=\lambda_n$. 
\end{proposition}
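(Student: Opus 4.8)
The plan is to read Proposition~\ref{thm:perron} as a Riesz-mean (Ces\`aro-smoothed) form of Perron's formula, whose engine is the classical Mellin--Barnes evaluation of a discontinuous integral. \textbf{Step 1 (the discontinuous integral).} I would first prove that, for $w>0$, $\sigma>0$ and $\rho\geq0$,
\begin{equation}\label{eq:discint}
\frac{1}{2\pi i}\int_{\sigma-i\infty}^{\sigma+i\infty}\frac{\Gamma(s)}{\Gamma(\rho+1+s)}\,w^{-s}\,ds=
\begin{cases}
\dfrac{(1-w)^{\rho}}{\Gamma(\rho+1)}, & 0<w<1,\\
\dfrac{1}{2\,\Gamma(\rho+1)}, & w=1,\ \rho=0,\\
0, & w>1,\ \text{or}\ (w=1,\ \rho>0).
\end{cases}
\end{equation}
For $w>1$ one shifts the contour to $\Re s=\sigma'\to+\infty$: the integrand is holomorphic in $\Re s>0$ (the poles of $\Gamma(s)$ lie at $s=0,-1,-2,\dots$), a routine consequence of Stirling's formula (Proposition~\ref{thm:stirling}) is $|\Gamma(s)/\Gamma(\rho+1+s)|\ll(1+|s|)^{-\rho-1}$ uniformly in right half-planes, so the integral over $\Re s=\sigma'$ is $\ll\sigma'^{-\rho}w^{-\sigma'}\to0$ and the connecting horizontal segments vanish as their height tends to $\infty$. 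For $0<w<1$ one instead shifts to $\Re s=-N-\tfrac12$, $N\to\infty$, collecting the residues $\Res_{s=-k}\big(\Gamma(s)w^{-s}/\Gamma(\rho+1+s)\big)=(-1)^{k}w^{k}/(k!\,\Gamma(\rho+1-k))$; since $\Gamma(\rho+1)/(k!\,\Gamma(\rho+1-k))=\binom{\rho}{k}$, the residue sum is $\Gamma(\rho+1)^{-1}\sum_{k\geq0}\binom{\rho}{k}(-w)^{k}=\Gamma(\rho+1)^{-1}(1-w)^{\rho}$ by the binomial theorem, while the shifted integral again tends to $0$ (now because $w^{-\sigma'}=w^{|\sigma'|}\to0$). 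At $w=1$ the integrand is $O(|t|^{-\rho-1})$ on the line, so for $\rho>0$ the value is $0$ by dominated convergence as $w\to1^{-}$, whereas for $\rho=0$ it is the classical conditionally convergent Dirichlet integral $\frac{1}{2\pi i}\int_{(\sigma)}w^{-s}s^{-1}\,ds=\tfrac12$.

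\textbf{Step 2 (summation).} Writing $\varphi(s)x^{s}=\sum_{n\geq1}a_n(\lambda_n/x)^{-s}$ and inserting $w=\lambda_n/x$ into \eqref{eq:discint}, it remains to interchange the sum over $n$ with the $s$-integral. On $\Re s=\sigma$ one has $\sum_n|a_n|\lambda_n^{-\sigma}\leq\sum_n|a_n|\lambda_n^{-\alpha}<\infty$ (as $\sigma\geq\alpha$), so $\varphi$ is there an absolutely convergent series; for $\rho>0$ one moreover has $\int_{(\sigma)}|\Gamma(s)/\Gamma(\rho+1+s)|\,|ds|<\infty$, so Fubini's theorem applies and gives
\begin{align*}
\frac{1}{2\pi i}\int_{\sigma-i\infty}^{\sigma+i\infty}\frac{\Gamma(s)\varphi(s)x^{s+\rho}}{\Gamma(\rho+1+s)}\,ds
&=x^{\rho}\sum_{n\geq1}a_n\cdot\frac{1}{2\pi i}\int_{\sigma-i\infty}^{\sigma+i\infty}\frac{\Gamma(s)}{\Gamma(\rho+1+s)}\Big(\frac{\lambda_n}{x}\Big)^{-s}ds\\
&=\frac{x^{\rho}}{\Gamma(\rho+1)}\sum_{\lambda_n\leq x}a_n\Big(1-\frac{\lambda_n}{x}\Big)^{\rho}\\
&=\frac{1}{\Gamma(\rho+1)}{\sum}'_{\lambda_n\leq x}a_n(x-\lambda_n)^{\rho},
\end{align*}
which is the assertion for $\rho>0$; the case $w=1$, $\rho>0$ of \eqref{eq:discint} contributes $0$, consistent with the prime convention.

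\textbf{Main obstacle.} The delicate case is the endpoint $\rho=0$: then $\Gamma(s)/\Gamma(1+s)=1/s$ is not absolutely integrable on the vertical line, so the termwise interchange in Step~2 is not directly licensed. I would treat it by the classical effective truncated Perron formula, namely $\frac{1}{2\pi i}\int_{\sigma-iT}^{\sigma+iT}w^{-s}x^{s}s^{-1}\,ds=\mathbf 1_{\{w<1\}}+O\big(\min\{1,(T|\log w|)^{-1}\}\big)$ for $w\neq1$ (and $\tfrac12+O(1/T)$ for $w=1$): one interchanges the \emph{finite}-height integral with the absolutely convergent Dirichlet series, sums the resulting error terms over $n$, and lets $T\to\infty$, recovering ${\sum}'_{\lambda_n\leq x}a_n$ and hence Proposition~\ref{thm:perron}. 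Since the statement is quoted verbatim as \cite[Lemma~1]{ChandNaras61}, one may instead simply cite their argument; the above reconstructs it.
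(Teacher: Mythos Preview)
Your proposal is correct. In the paper this proposition is not proved at all: it is quoted directly as \cite[Lemma~1]{ChandNaras61}, so the ``paper's own proof'' is a bare citation. Your argument faithfully reconstructs the standard proof behind that lemma---the Mellin--Barnes evaluation of the discontinuous integral (which the paper itself invokes later in the special form \eqref{invMe}, citing \cite[\S7.3, eq.~20]{erd}), followed by Fubini for $\rho>0$ and the truncated Perron device for $\rho=0$. Your own closing remark that one may simply cite \cite{ChandNaras61} is exactly what the authors do.
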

Finally, we recall the asymptotics of the $J$-Bessel function.
\begin{proposition}\label{thm:besselj-asymptotics} [ 
(10.17.3)\cite{DLMF}]
For $\nu \in \mathbb C,$ set
    $(\nu,m) := \frac{\Gamma(\nu + m +1/2)}{\Gamma(m+1)\Gamma(\nu-m + 1/2)}$. Then, as $z \to \infty$ with $|\arg z| \le \pi-\delta$ (for some $\delta>0$), we have
    $$
        J_{\nu}(z)
        \sim 
        \left(\frac{2}{\pi z} \right)^{1/2} \left[\cos\left(z-\frac{\nu\pi}{2}-\frac{\pi}{4}\right)\sum_{m=0}^{\infty} \frac{(-)^{m}(\nu,2m)}{(2z)^{2m}} 
        - \sin\left(z-\frac{\nu\pi}{2}-\frac{\pi}{4}\right)\sum_{m=0}^{\infty} \frac{(-)^m(\nu,2m+1)}{(2z)^{2m + 1}}  \right].
    $$
\end{proposition}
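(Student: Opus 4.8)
The statement is the classical large-argument expansion of the $J$-Bessel function (DLMF (10.17.3)), so the plan is to reproduce the standard steepest-descent derivation, organized so that the coefficients $(\nu,m)$ and the even/odd split into $\cos$ and $\sin$ emerge transparently. Write $\omega := z - \frac{\nu\pi}{2} - \frac{\pi}{4}$. First I would reduce to the Hankel functions through $J_\nu = \frac{1}{2}\left(H_\nu^{(1)} + H_\nu^{(2)}\right)$, so that it suffices to expand $H_\nu^{(1)}$ and $H_\nu^{(2)}$ separately and recombine. The two are governed by the two saddle points $w = \pm i\pi/2$ of the phase $z\sinh w - \nu w$ in the Sommerfeld/Schläfli contour representation $H_\nu^{(1,2)}(z) = \frac{1}{\pi i}\int_{C_{1,2}} e^{z\sinh w - \nu w}\, dw$ (for suitable contours $C_{1,2}$), and the values $\sinh(\pm i\pi/2) = \pm i$ are exactly what produce the $e^{\pm iz}$ behavior.

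The engine of the expansion is the integral representation obtained after deforming $C_1$ through the saddle at $w = i\pi/2$,
$$H_\nu^{(1)}(z) = \left(\frac{2}{\pi z}\right)^{1/2} \frac{e^{i\omega}}{\Gamma\left(\nu + \tfrac12\right)} \int_0^\infty e^{-t}\, t^{\nu - 1/2}\left(1 + \frac{it}{2z}\right)^{\nu - 1/2}\, dt,$$
valid for $\Re\nu > -\tfrac12$, where the prefactor $\left(2/(\pi z)\right)^{1/2}$ and the term $-\pi/4$ inside $\omega$ come from the Gaussian factor at the saddle together with the $1/(\pi i)$ normalization. I would then apply Watson's lemma by binomial-expanding $\left(1 + \frac{it}{2z}\right)^{\nu - 1/2} = \sum_{m \ge 0}\binom{\nu - 1/2}{m}\left(\frac{it}{2z}\right)^m$ and integrating term by term against $e^{-t} t^{\nu - 1/2}$. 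Using $\int_0^\infty e^{-t} t^{\nu - 1/2 + m}\, dt = \Gamma\left(\nu + m + \tfrac12\right)$ and $\binom{\nu - 1/2}{m} = \frac{\Gamma(\nu + 1/2)}{\Gamma(m+1)\Gamma(\nu - m + 1/2)}$, the factor $\Gamma(\nu + 1/2)$ cancels and the $m$-th coefficient becomes precisely $(\nu, m)\, i^m/(2z)^m$, giving $H_\nu^{(1)}(z) \sim \left(2/(\pi z)\right)^{1/2} e^{i\omega}\sum_{m \ge 0} i^m (\nu, m)/(2z)^m$. The conjugate saddle at $-i\pi/2$ yields the same series with $i$ replaced by $-i$ and $e^{i\omega}$ by $e^{-i\omega}$, i.e. $H_\nu^{(2)}(z) \sim \left(2/(\pi z)\right)^{1/2} e^{-i\omega}\sum_{m \ge 0}(-i)^m(\nu,m)/(2z)^m$.

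Averaging the two and separating $m = 2k$ from $m = 2k+1$ finishes the argument: for even $m$ one has $i^{2k} = (-i)^{2k} = (-1)^k$, so the exponentials combine into $2(-1)^k\cos\omega$, while for odd $m$ one has $i^{2k+1} = (-1)^k i$ and $(-i)^{2k+1} = (-1)^{k+1} i$, so they combine into $-2(-1)^k\sin\omega$; dividing by $2$ reproduces exactly the stated expansion, and one can check the first correction $(\nu,1) = \nu^2 - \tfrac14$ against the classical value $\frac{4\nu^2 - 1}{8z}$. The main obstacles are analytic rather than algebraic. The first is justifying the Poincaré (asymptotic) nature of the expansion with a remainder bound that is \emph{uniform} in the full sector $|\arg z| \le \pi - \delta$: this requires truncating the binomial series with an explicit integral remainder and bounding $\left(1 + \frac{it}{2z}\right)^{\nu - 1/2 - M}$ uniformly for $z$ in the sector, which is precisely where the restriction away from the negative real axis is needed. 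The second is the extension to all $\nu \in \mathbb C$, since the Laplace integral above converges only for $\Re\nu > -\tfrac12$; I would lift this either by replacing $\int_0^\infty$ with a Hankel loop contour that renders the integral entire in $\nu$, or by analytic continuation in $\nu$ combined with the recurrence $J_{\nu-1}(z) + J_{\nu+1}(z) = \frac{2\nu}{z} J_\nu(z)$, which propagates the expansion across strips of width one in $\Re\nu$.
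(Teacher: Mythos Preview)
The paper does not prove this proposition; it simply quotes the result from the DLMF with the citation [(10.17.3)] and uses it as a black box throughout. Your outline is the standard textbook derivation (Hankel decomposition, the Laplace-type integral for $H_\nu^{(1)}$, Watson's lemma, then recombination into the $\cos/\sin$ form), and the coefficient bookkeeping you give is correct, including the check $(\nu,1)=\nu^2-\tfrac14$. So there is nothing to compare against in the paper itself: you have supplied a proof where the authors chose to cite one.
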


\subsection{Special function arising from the non-holomorphic part}
The results in \cite{ShanSingh22} show that the $L$-function associated to a harmonic Maass form $f$ of polynomial growth involves the special function $W_\nu(s)$, for some $\nu \in \mathbb R$. This plays the role of the $\Gamma$-function for the contribution of the ``non-holomorphic part" of $f$ to the completed $L$-function of $f$. It is defined by the following formula for $\Re(s)>\max\{-\nu,0\}$:
\begin{equation}\label{eq:Wdef}
W_{\nu}(s):=\int_{0}^{\infty}\Gamma(\nu,2x)e^{x}x^{s-1}dx
\end{equation}
where $\Gamma(s,z)$ is the incomplete Gamma function, given, for $\operatorname{Re}(s) > 0$, by
\begin{align} \label{eq:incompleteGammaDef}
\Gamma(s,z) := \int_z^{\infty} t^{s-1} e^{-t} dt, \qquad z \in \mathbb{C}.
\end{align}
\begin{remark}
The function $W_{\nu}$ could be compared with $\mathbf \Gamma_s(y)$ of \cite{BDR} which can also be thought of as an iterated version of $\Gamma(s, z)$ and plays a key role in the sesquiharmonic Maass forms studied there.    
\end{remark}

To see that the integral in \eqref{eq:Wdef} converges for $\Re(s)>\max\{-\nu,0\}$, use the asymptotic behavior of $\Gamma(\nu,x)$ as $x\to 0$ (\cite{DLMF} 8.7.3) and as $x\to \infty$ (\cite{DLMF} 8.11.2). Note that, for $\nu \in -\mathbb{N}_0$, we draw this conclusion, through \cite{DLMF} 8.4.15:
$$\Gamma(\nu,z) = \frac{(-1)^n}{(-\nu)!} ( \psi(1-\nu) -\log z) + O(z^{\nu}) \qquad \text{if $\nu \in -\mathbb N_0$ as $z \to 0$}$$
where $\psi(z)$ is the Euler digamma function. We will now prove several properties of $W_{\nu}(s)$.

First we have the formula relating $W_{\nu}(s)$ to the Gamma function, the incomplete beta function $B(z, a, b)$ and Gauss's hypergeometric function $_2F_1(a, b, c; s)$:
\begin{lemma}\label{thm:Wformula}
For $\Re(s)>\max\{-\nu,0\}$, we have the formulas
\begin{align}\label{W}
W_{\nu}(s)&=B\left(\frac{1}{2};s,1-s-\nu\right)\Gamma(s+\nu)
= \frac{\Gamma(s+\nu)}{2^s s}\, _2F_1 \left(s, s+\nu, 1+s; \frac{1}{2} \right).
\end{align}
\end{lemma}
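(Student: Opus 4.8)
The plan is to start from the definition \eqref{eq:Wdef} and insert the integral representation \eqref{eq:incompleteGammaDef} of the incomplete Gamma function, producing a double integral over the region $\{(x,t): 0<x<\infty,\ t>2x\}$. After justifying absolute convergence on $\Re(s)>\max\{-\nu,0\}$ (so that Fubini applies), I would switch the order of integration: for fixed $t>0$, $x$ ranges over $(0,t/2)$, giving
$$
W_\nu(s)=\int_0^\infty t^{\nu-1}e^{-t}\left(\int_0^{t/2} e^{x}x^{s-1}\,dx\right)dt.
$$
The inner integral is an incomplete-Gamma-type expression; substituting $x=tu$ (or directly $x\mapsto$ the variable of the Beta integral) and then $t\mapsto$ a rescaled variable should collapse the whole thing. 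Concretely, writing $x = t u/2$... actually the cleaner route is the substitution $t = x/ w$ after swapping, or simply to substitute in the \emph{original} double integral $t = x\, v$ with $v>2$, so that
$$
W_\nu(s)=\int_0^\infty\!\!\int_2^\infty x^{s+\nu-1} e^{x}e^{-xv} v^{\nu-1}\,dv\,dx
=\int_2^\infty v^{\nu-1}\left(\int_0^\infty x^{s+\nu-1}e^{-x(v-1)}\,dx\right)dv
=\Gamma(s+\nu)\int_2^\infty \frac{v^{\nu-1}}{(v-1)^{s+\nu}}\,dv.
$$
This already exhibits the factor $\Gamma(s+\nu)$ cleanly, provided $\Re(s+\nu)>0$, which holds on the stated half-plane.

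Next I would identify the remaining one-dimensional integral with the incomplete Beta function. Substituting $v = 1/w$ (so $v\in(2,\infty)$ corresponds to $w\in(0,1/2)$, $dv = -w^{-2}dw$) turns $\int_2^\infty v^{\nu-1}(v-1)^{-s-\nu}\,dv$ into $\int_0^{1/2} w^{s-1}(1-w)^{-s-\nu}\,dw$, which is exactly $B\!\left(\tfrac12; s, 1-s-\nu\right)$ in the normalization $B(z;a,b)=\int_0^z w^{a-1}(1-w)^{b-1}\,dw$ used in the paper. That establishes the first equality in \eqref{W}. For the second equality I would invoke the standard hypergeometric representation of the incomplete Beta function, $B(z;a,b)=\frac{z^a}{a}\,{}_2F_1(a,1-b,a+1;z)$, with $a=s$, $b=1-s-\nu$, $z=\tfrac12$, which gives $B\!\left(\tfrac12;s,1-s-\nu\right)=\frac{1}{2^s s}\,{}_2F_1\!\left(s,s+\nu,1+s;\tfrac12\right)$; multiplying by $\Gamma(s+\nu)$ yields the claimed closed form.

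The only genuinely delicate point is the convergence bookkeeping needed to justify Fubini and the interchange of limits: near $x=0$ the factor $x^{s-1}$ requires $\Re(s)>0$, while the $v$-integral near $v=\infty$ behaves like $\int^\infty v^{\nu-1-s-\nu}\,dv=\int^\infty v^{-s-1}\,dv$, needing $\Re(s)>0$ again, and the $x$-integral near $x=\infty$ converges because $v-1\ge 1>0$ on the relevant range — but one must also handle the case $\nu\in -\mathbb N_0$, where the $x\to0$ asymptotics of $\Gamma(\nu,2x)$ pick up a logarithmic term (as recalled in the excerpt via \cite{DLMF} 8.4.15); there the condition $\Re(s)>0=\max\{-\nu,0\}$ still secures absolute convergence since $x^{s-1}\log x$ is integrable at $0$ for $\Re(s)>0$. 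I expect that verifying absolute convergence of the double integral on $\Re(s)>\max\{-\nu,0\}$, so that all the substitutions above are legitimate, will be the main (though routine) obstacle; once that is in place the identity is a short computation, and both displayed forms in \eqref{W} then extend by analytic continuation wherever both sides are defined.
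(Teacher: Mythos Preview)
Your proposal is correct and follows essentially the same route as the paper: substitute $t=xv$ in the incomplete Gamma integral to obtain $\Gamma(s+\nu)\int_2^\infty v^{\nu-1}(v-1)^{-s-\nu}\,dv$, then identify this with $B(\tfrac12;s,1-s-\nu)$ via $v=1/w$ (the paper cites a table entry for this last step), and finally invoke the standard hypergeometric representation of the incomplete Beta function. Your convergence discussion is in fact more thorough than the paper's, which simply appeals to absolute convergence for $\Re(s+\nu)>0$ and Fubini.
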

\begin{proof}
We obtain the first equality using a straightforward calculation involving a series of $u$-substitutions:
	\begin{align*}
		W_{\nu}(s)&=\int_{0}^{\infty}\Gamma(\nu,2x)e^{x}x^{s-1}dx \\
&=\int_{0}^{\infty} \left(\int_{2}^{\infty} e^{-tx}t^{\nu-1}dt\right) e^{x}x^{s-1+\nu}dx\\
&= \int_{2}^{\infty} t^{\nu-1} \left(\int_{0}^{\infty} e^{(1-t)x}x^{s-1+\nu} dx\right) dt \\ 
&= \int_{2}^{\infty} t^{\nu-1}(t-1)^{-s-\nu} \left(\int_{0}^{\infty} e^{-w}w^{s-1+\nu} dw\right) dt\\ 
&=\Gamma(s+\nu)B(1/2,s,1-s-\nu).
\end{align*}
We justify the integral swaps by the absolute convergence of the integral when Re$(s+\nu)>0$ and the application of Fubini's theorem.  We use \cite[Eq. 8.391]{table} to derive both the last line of the above calculation and the second formula in the lemma.  
\end{proof}
This lemma has a corollary, the second part of which is an analogue of Stirling's formula for $W_{\nu}.$
\begin{corollary}\label{thm:Wbound}
The function $W_{\nu}(s)$, defined by \eqref{eq:Wdef} for $\Re(s)>\max\{-\nu,0\}$, extends meromorphically to the entire complex plane with poles at $s \in S:=\{0,-1,-2,...\} \cup \{ -\nu,-\nu-1,-\nu-2,... \}$. The order of the pole at $s \in S$ is given by the multiplicity of $s$ in $S$ viewed as a multiset. Further, let $\nu,\alpha \in \mathbb{R}$, assume $\alpha > 0$. Then as $|t| \to \infty$, we have
$$
W_{\nu}(\alpha+it) \ll_{\alpha,\nu} |t|^{\alpha + \nu - \frac{1}{2}} e^{-\pi |t| /2}.
$$
\end{corollary}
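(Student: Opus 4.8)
The plan is to deduce both assertions from the closed form in Lemma~\ref{thm:Wformula}: for $\Re(s)>\max\{-\nu,0\}$ one has
\begin{align*}
W_\nu(s)=\Gamma(s+\nu)\,B\!\left(1/2;\,s,\,1-s-\nu\right),\qquad B\!\left(1/2;\,a,\,b\right)=\int_0^{1/2}u^{a-1}(1-u)^{b-1}\,du,
\end{align*}
and I will take the right-hand side as the definition of the meromorphic continuation.

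First I would continue $g(s):=B(1/2;s,1-s-\nu)=\int_0^{1/2}u^{s-1}(1-u)^{-s-\nu}\,du$. For $\Re(s)>0$, replace $(1-u)^{-s-\nu}$ by its Taylor expansion $\sum_{m\ge 0}\frac{(s+\nu)_m}{m!}u^m$ (with $(x)_m$ the Pochhammer symbol) and subtract the first $N+1$ terms: the remaining integrand is $O(u^{N+1})$ as $u\to 0$, so the corresponding integral is holomorphic for $\Re(s)>-N-1$, and what is subtracted off is the explicit finite sum $\sum_{m=0}^{N}\frac{(s+\nu)_m}{m!}\cdot\frac{2^{-s-m}}{s+m}$, meromorphic with at most simple poles at $s=0,-1,\dots,-N$. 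Letting $N\to\infty$ shows $g$, and hence $W_\nu=\Gamma(s+\nu)\,g$, is meromorphic on $\mathbb C$ with poles contained in $S=\{0,-1,-2,\dots\}\cup\{-\nu,-\nu-1,\dots\}$. For the orders: $\Gamma(s+\nu)$ contributes a simple pole at each point of $\{-\nu,-\nu-1,\dots\}$ and is holomorphic and nonzero elsewhere, while $g$ contributes a pole only at $s_0\in\{0,-1,\dots\}$, where its order is governed by the single term $m=-s_0$, whose residue $\frac{(s_0+\nu)_{-s_0}}{(-s_0)!}$ I would write out explicitly. Reading off leading Laurent coefficients then gives that the order of $W_\nu$ at $s_0\in S$ is the number of the two listed sets containing $s_0$, i.e.\ the multiplicity of $s_0$ in $S$ viewed as a multiset. (An alternative route to the continuation is the functional equation $s\,W_\nu(s)=2^\nu\Gamma(s+\nu)-W_\nu(s+1)$, obtained from \eqref{eq:Wdef} by a single integration by parts; this propagates the pole set one vertical strip at a time.)

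For the asymptotic bound, fix $\alpha>0$, put $s=\alpha+it$, and note that on the line $\Re(s)=\alpha$ the function $W_\nu$ has at most one pole (only possibly at $t=0$), so it suffices to estimate it for $|t|\to\infty$. From the displayed identity, since $|u^{it}(1-u)^{-it}|=1$ for $u\in(0,1)$,
\begin{align*}
\left|W_\nu(\alpha+it)\right|=\left|\Gamma(\alpha+\nu+it)\right|\cdot\left|\int_0^{1/2}u^{\alpha-1+it}(1-u)^{-\alpha-\nu-it}\,du\right|\le\left|\Gamma(\alpha+\nu+it)\right|\int_0^{1/2}u^{\alpha-1}(1-u)^{-\alpha-\nu}\,du,
\end{align*}
and the last integral is a finite constant $C_{\alpha,\nu}$: the condition $\alpha>0$ makes it convergent at $u=0$, and $1/2$ is an interior point of $(0,1)$ so there is no issue at the upper limit. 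Applying Stirling's formula (Proposition~\ref{thm:stirling}) to $\Gamma(\alpha+\nu+it)$ gives $|\Gamma(\alpha+\nu+it)|\sim|t|^{\alpha+\nu-1/2}e^{-\pi|t|/2}$ as $|t|\to\infty$, and multiplying the two estimates yields $W_\nu(\alpha+it)\ll_{\alpha,\nu}|t|^{\alpha+\nu-1/2}e^{-\pi|t|/2}$, as required.

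The continuation and the vertical-line estimate are both essentially immediate once Lemma~\ref{thm:Wformula} is available; the step that needs genuine care is the exact matching of pole orders, where one must check via the explicit residues that the finitely many singular contributions at a coincidence point of $\{0,-1,\dots\}$ and $\{-\nu,-\nu-1,\dots\}$ do not partially cancel.
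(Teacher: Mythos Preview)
Your proposal is correct and follows essentially the same approach as the paper: both deduce everything from Lemma~\ref{thm:Wformula}, bound the asymptotic by taking absolute values inside the beta integral and applying Stirling to the $\Gamma(s+\nu)$ factor, and locate the poles by separating the $\Gamma$-contribution from the incomplete-beta contribution. The only cosmetic difference is that the paper invokes the second identity in Lemma~\ref{thm:Wformula} and cites the known pole structure of ${}_2F_1(s,s+\nu,1+s;1/2)$ and of $1/s$, whereas you unpack the equivalent beta integral by Taylor expansion; your route is slightly more self-contained, while the paper's is shorter by appeal to standard references.
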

\begin{proof} We consider the second equality of Lemma \ref{thm:Wformula}. Then the analytic continuations of the Gamma function and $_2F_1$ hypergeometric function give us the analytic continuation of $W_{\nu}(s)$. Further, the Gamma function has poles at $s+\nu\in -\mathbb{N}_{0}$ \cite[5.2.1]{NIST:DLMF} and $_2F_1$ hypergeometric function has poles at $1+s\in -\mathbb{N}_{0}$ \cite[15.2.1]{NIST:DLMF}. Therefore, $W_{\nu}(s)$ has poles at $s\in -\nu-\mathbb{N}_{0}$, $s\in -1-\mathbb{N}_{0}$ and at $s=0$. The asymptotic is immediate from Proposition~\ref{thm:st} and Lemma \ref{thm:Wformula}:
\begin{align*}
|W_{\nu}(\alpha+it)| \ll
\left (\int_0^{\frac{1}{2}} u^{\alpha - 1} (1-u)^{-\alpha - \nu} du \right ) |t|^{\alpha + \nu - \frac{1}{2}} e^{-\pi |t| /2}
\end{align*}
\end{proof}

We will use the following analogue of Proposition~\ref{thm:perron} for the $W_{\nu}(s)$ function. 
\begin{theorem}
\label{thm:perrongen}
Let $k, \rho, r, \alpha \in \mathbb{R}$ with $r, \alpha>0$. Assume that $\alpha + 1 -k > 0$, $k + \rho > 1$, and also that on the line $Re(s) = \alpha$, $L(s) = \sum_{n=1}^{\infty} b(n) n^{-s}$ is absolutely convergent. Then we have
\begin{align}\label{g(n, x)}
\sum_{n \le r} b(n) g_{\rho}(n,r) = \int_{(\alpha)} \frac{L(s) W_{1-k}(s)}{\Gamma(\rho + 1 + s)} r^s ds
\end{align}
where
$$g_{\rho}(n,r)=\frac{2 \pi i}{\Gamma(k+\rho)} \left (1+\frac{n}{r} \right )^{\rho}\int_{\frac{2n}{r+n}}^1 v^{-k} (1-v)^{k+\rho-1} dv.$$
When $k \not \in \mathbb{N}$, $g_{\rho}(n,r)$ equals
$$\frac{2 \pi i}{\Gamma(k+\rho)} \left (1+\frac{n}{r} \right )^{\rho}\left (\frac{\Gamma(1-k)\Gamma(\rho+k)}{\Gamma(\rho+1)}-B \left (\frac{2}{r+1}, 1-k, \rho+k \right ) \right ).
$$
\end{theorem}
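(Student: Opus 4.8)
The plan is to mimic the proof of the Perron-type formula in Proposition~\ref{thm:perron}, but with the kernel $\Gamma(s)$ replaced by $W_{1-k}(s)$, exploiting the integral representation of $W_{1-k}(s)$ from Lemma~\ref{thm:Wformula}. First I would start from the right-hand side of \eqref{g(n, x)} and insert the Dirichlet series $L(s) = \sum_n b(n) n^{-s}$, which converges absolutely on $\Re(s) = \alpha$ by hypothesis; this lets me interchange the sum and the contour integral, reducing matters to evaluating, for each fixed $n$,
$$
I_n := \int_{(\alpha)} \frac{W_{1-k}(s)}{\Gamma(\rho+1+s)} \left(\frac{r}{n}\right)^s ds.
$$
The key move is to substitute the first formula of Lemma~\ref{thm:Wformula}, namely $W_{1-k}(s) = \Gamma(s+1-k)\, B\!\left(\tfrac12; s, k-s\right)$, and then expand the incomplete Beta function via its defining integral $B(\tfrac12; s, k-s) = \int_0^{1/2} v^{s-1}(1-v)^{k-s-1}\,dv$. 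After swapping this $v$-integral with the $s$-integral (justified by absolute convergence, using the Stirling-type decay of $W_{1-k}$ from Corollary~\ref{thm:Wbound} together with the decay of $1/\Gamma(\rho+1+s)$ along vertical lines), the inner $s$-integral becomes a Mellin--Barnes integral of the shape
$$
\frac{1}{2\pi i}\int_{(\alpha)} \frac{\Gamma(s+1-k)}{\Gamma(\rho+1+s)} \left(\frac{r(1-v)}{nv}\right)^s ds,
$$
which is a standard Mellin--Barnes representation (a shifted Beta integral / binomial series) evaluating to a power of $1 - \tfrac{nv}{r(1-v)}$ times $1/\Gamma(k+\rho)$ when the argument exceeds $1$, and to $0$ otherwise; the cutoff $r(1-v) > nv$, i.e.\ $v < \tfrac{r}{r+n}$, is exactly what produces the lower limit $\tfrac{2n}{r+n}$ after also imposing $v < \tfrac12$. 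Collecting the surviving range $\tfrac{2n}{r+n} \le v \le 1$ (and noting the contribution vanishes unless $\tfrac{2n}{r+n} < 1$, i.e.\ $n < r$) and changing variables to match normalizations yields precisely $g_\rho(n,r)$, with the factor $\left(1+\tfrac{n}{r}\right)^\rho$ emerging from the power of $r/n$ combined with $(1-v)$, $v$ terms.

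For the closed-form expression when $k \notin \mathbb{N}$, I would simply recognize that $\int_{2n/(r+n)}^{1} v^{-k}(1-v)^{k+\rho-1}\,dv = \int_0^1 - \int_0^{2n/(r+n)}$, where $\int_0^1 v^{-k}(1-v)^{k+\rho-1}\,dv = B(1-k, \rho+k) = \tfrac{\Gamma(1-k)\Gamma(\rho+k)}{\Gamma(\rho+1)}$ converges precisely because $\alpha + 1 - k > 0$ forces $1-k > 0$ in the relevant regime (and $k+\rho > 1 > 0$), and the second integral is $B\!\left(\tfrac{2n}{r+n}; 1-k, \rho+k\right)$ by definition of the incomplete Beta function; substituting into $g_\rho(n,r)$ gives the stated formula, with the incomplete Beta argument $\tfrac{2}{r+1}$ matching the case $n=1$ as it appears in the applications, or more precisely $\tfrac{2n}{r+n}$ in general (I would double-check the normalization of the argument against the statement).

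The main obstacle I anticipate is rigorously justifying the iterated interchanges of the three operations — the sum over $n$, the contour integral over $s$, and the Beta-integral over $v$ — on the nose, since the Mellin--Barnes kernel $\Gamma(s+1-k)/\Gamma(\rho+1+s)$ decays only polynomially in $|\Im(s)|$ (the exponential factors $e^{-\pi|t|/2}$ cancel between numerator and denominator), so absolute convergence on vertical lines is delicate and one must use the hypotheses $\alpha+1-k>0$ and $k+\rho>1$ sharply to get enough polynomial decay, exactly as in \cite{ChandNaras61}. A secondary subtlety is handling the boundary behavior near $v = \tfrac12$ and near the cutoff $v = \tfrac{r}{r+n}$ when these coincide, and confirming that the "vanishing when $n \ge r$" phenomenon drops out cleanly rather than leaving a boundary term; I expect this to follow from the standard fact that a Mellin--Barnes integral of binomial type returns $0$ when its argument is $\le 1$, contour-shifted to the right with no poles crossed.
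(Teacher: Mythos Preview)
Your proposal is correct and follows essentially the same route as the paper: swap the Dirichlet series into the contour integral, insert the Beta-integral representation of $W_{1-k}(s)$ from Lemma~\ref{thm:Wformula}, interchange the $s$- and $u$-integrals, evaluate the inner Mellin--Barnes integral via the standard identity \eqref{invMe}, and then perform the changes of variable $v=u(1+y)$, $t=1/v$ to land on the stated $g_\rho(n,r)$. Your diagnosis of where the hypotheses $\alpha+1-k>0$ and $k+\rho>1$ are used (polynomial decay of $\Gamma(s+1-k)/\Gamma(\rho+1+s)\sim |t|^{-k-\rho}$ along the vertical line) is exactly right; the minor sign and cutoff-direction slips in your sketch would wash out in a careful write-up, and you are also correct to flag the incomplete-Beta argument, which should read $\tfrac{2n}{r+n}$ rather than $\tfrac{2}{r+1}$.
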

\begin{proof}
From Stirling's formula (Proposition~\ref{thm:stirling}) and the previous result, we deduce that the integral $\int_{(\alpha)}  \frac{W_{1-k}(s)}{\Gamma(\rho + 1 + s)} y^s ds$ is absolutely convergent. Using the absolute convergence of $L(s)$ on $Re(s) = \alpha,$ $\int_{(\alpha)} \frac{L(s) W_{1-k}(s)}{\Gamma(\rho + 1 + s)} r^sds$ is also absolutely convergent, and we can swap the sum and integral as follows:
\begin{align*}
\int_{(\alpha)} \frac{L(s) W_{1-k}(s)}{\Gamma(\rho + 1 + s)} r^sds &= \sum_{n=1}^{\infty} b(n) \int_{(\alpha)}  \frac{W_{1-k}(s)}{\Gamma(\rho + 1 + s)} (r/n)^s ds.
\end{align*}
We evaluate the more general integral with $r/n$ replaced by any $y>1$. Using Lemma~\ref{thm:Wformula}, we have 
\begin{align*}
\int_{(\alpha)}  \frac{W_{1-k}(s)}{\Gamma(\rho + 1 + s)} y^s ds &
= \int_{(\alpha)}  \frac{\Gamma( s + 1 -k) \int_0^{\frac{1}{2}} u^{s-1} (1-u)^{k-s-1} du }{\Gamma(\rho + 1 + s)} y^s ds \\ 
&= \int_0^{\frac{1}{2}}  (1-u)^{k-1} u^{-1} \left(\int_{(\alpha)}  \frac{\Gamma( s + 1 -k)}{\Gamma(\rho + 1 + s)} \left(\frac{1-u}{uy}\right)^{-s} ds  \right) du.
\end{align*}
 By \cite[sec 7.3, eq. 20]{erd}, for $\gamma, a>0$, we have
\begin{align}\label{invMe}
\frac{1}{2\pi i}\int_{(\gamma)}  \frac{\Gamma(s)}{\Gamma(s + a)} y^{-s} ds &= \frac{(1-y)^{a-1}}{\Gamma(a)}\mathbf 1_{(0, 1)}(y),
\end{align}
where $\mathbf 1_{S}(y)$ is the indicator function of the set $S$. This means, that 
if $\alpha>k-1$ and $\rho +k>0$, then, with the change of variables $v=u(1+y)$, we get 
\begin{multline}\label{preform}
\frac{2 \pi i}{\Gamma(k+\rho)} \int_{\frac{1}{1+y}}^{\frac{1}{2}} \left(1 -  \left(\frac{1-u}{uy}\right) \right)^{-1+k+\rho}  \left(\frac{1-u}{uy}\right)^{1-k}  (1-u)^{k-1} u^{-1} du \\
= \frac{2 \pi i }{\Gamma(k+\rho)} \left ( \frac{1+y}{y} \right )^{\rho}\int_{1}^{\frac{y+1}{2}}  v^{-1-\rho} (v-1)^{-1+k+\rho} dv.
\end{multline} 
With the change of variables $t=1/v$, the integral becomes
$\int_{\frac{2}{y+1}}^1  v^{-k} (1-v)^{k+\rho-1} dv$
which, in turn, if $k \not \in \mathbb N$, equals
\begin{equation*}
 \left (\int_0^1-\int_0^{\frac{2}{y+1}} \right ) v^{-k} (1-v)^{\rho+k-1} dv=
B(1, 1-k, \rho+k)-B \left (\frac{2}{y+1}, 1-k, \rho+k \right ).
\end{equation*} 
From this, combined with \eqref{preform} and the formula 
\cite{DLMF} 5.12.1 expressing $B(1, a, b) = B(a,b)$ in terms of the Gamma function, we deduce the result. 
\end{proof}

Finally, we note that the function $W_\nu(s)$ satisfies a recursive formula analogous to the functional equation for $\Gamma(s)$.
\begin{lemma}
For any $\Re(s)>0$ and $\nu > 0$, we have
$$sW_{\nu}(s) = 2^{\nu}\Gamma(s+\nu)-W_{\nu}(s+1)$$
\end{lemma}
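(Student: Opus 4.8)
The plan is to work directly from the integral definition \eqref{eq:Wdef} and integrate by parts, exploiting the fact that the incomplete Gamma function $\Gamma(\nu,2x)$ has an elementary derivative in $x$. First I would recall that $\frac{d}{dx}\Gamma(\nu,2x) = -2(2x)^{\nu-1}e^{-2x}$, which follows immediately from differentiating \eqref{eq:incompleteGammaDef} under the integral sign (the lower limit is $2x$, and the chain rule contributes the factor $2$). Then, starting from
$$
W_{\nu}(s) = \int_0^{\infty} \Gamma(\nu,2x)\, e^{x} x^{s-1}\, dx,
$$
I would integrate by parts with $u = \Gamma(\nu,2x)$ and $dv = e^{x}x^{s-1}\,dx$, but it is cleaner to instead take $dv = x^{s-1}\,dx$ after first writing $s W_\nu(s) = \int_0^\infty \Gamma(\nu,2x)e^{x}\frac{d}{dx}(x^{s}) \, dx$ — actually the most transparent route is to antidifferentiate $x^{s-1}$ to $x^s/s$ and differentiate the product $\Gamma(\nu,2x)e^{x}$. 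Carrying this out, $s W_\nu(s)$ equals the boundary term $\big[\Gamma(\nu,2x)e^{x}x^{s}\big]_0^{\infty}$ minus $\int_0^\infty \frac{d}{dx}\!\big(\Gamma(\nu,2x)e^{x}\big) x^{s}\,dx$. The boundary term vanishes at both ends for $\Re(s)>0$ and $\nu>0$: at $x\to\infty$ because $\Gamma(\nu,2x)e^{x} \sim (2x)^{\nu-1}e^{-x} \to 0$, and at $x\to 0$ because $x^{s}\to 0$ while $\Gamma(\nu,2x)$ stays bounded (here $\nu>0$ is exactly what we need so that $\Gamma(\nu,0)=\Gamma(\nu)$ is finite).

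Next I would expand the derivative inside the remaining integral via the product rule:
$$
\frac{d}{dx}\!\big(\Gamma(\nu,2x)e^{x}\big) = -2(2x)^{\nu-1}e^{-2x}e^{x} + \Gamma(\nu,2x)e^{x} = -2^{\nu}x^{\nu-1}e^{-x} + \Gamma(\nu,2x)e^{x}.
$$
Substituting this back gives
$$
s W_\nu(s) = 2^{\nu}\int_0^{\infty} x^{s+\nu-1}e^{-x}\,dx - \int_0^{\infty}\Gamma(\nu,2x)e^{x}x^{s}\,dx = 2^{\nu}\Gamma(s+\nu) - W_{\nu}(s+1),
$$
where the first integral is recognized as $\Gamma(s+\nu)$ by \eqref{eq:incompleteGammaDef} (valid since $\Re(s+\nu)>0$) and the second is $W_\nu(s+1)$ by \eqref{eq:Wdef} (its integral converges for $\Re(s+1)>\max\{-\nu,0\}$, which holds). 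This is exactly the claimed identity.

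I do not anticipate a serious obstacle here; the only point requiring a little care is the vanishing of the boundary term at $x=0$, which is where the hypothesis $\nu>0$ (rather than merely $\Re(s)>\max\{-\nu,0\}$) is used — for $\nu \le 0$ the factor $\Gamma(\nu,2x)$ blows up logarithmically or like a power as $x\to 0$, and one would need $\Re(s)$ correspondingly larger to kill the boundary contribution, so restricting to $\nu>0$ keeps the statement clean. One could alternatively justify everything by first establishing the identity on a right half-plane by the above computation and then invoking the meromorphic continuation from Corollary~\ref{thm:Wbound} to extend it, but since the lemma is only asserted for $\Re(s)>0$ and $\nu>0$ the direct integration-by-parts argument suffices as stated.
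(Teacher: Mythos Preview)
Your proof is correct and follows essentially the same approach as the paper: integrate by parts with $u=\Gamma(\nu,2x)e^{x}$ and $dv=x^{s-1}\,dx$, use the derivative $\frac{d}{dx}\Gamma(\nu,2x)=-2(2x)^{\nu-1}e^{-2x}$, and identify the resulting integrals as $2^{\nu}\Gamma(s+\nu)$ and $W_{\nu}(s+1)$. Your justification of the boundary terms is slightly more detailed than the paper's (you explicitly treat $x\to 0$ and explain why $\nu>0$ is needed there), but the argument is the same.
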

\begin{proof} Since, by (8.11.2) of \cite{NIST:DLMF}, $\Gamma(a, x) \ll e^{-x}x^{a-1}$, as $x \to \infty,$ integration by parts gives
    \begin{align*}
	sW_{\nu}(s)&=\int_{0}^{\infty}\Gamma(\nu,2x)e^{x}sx^{s-1}dx\\
	&=\Gamma(\nu,2x) e^x x^s |_{0}^{\infty}  - \int_{0}^{\infty} ((\Gamma(\nu,2x))'+\Gamma(\nu,2x))e^{x}x^{s}dx\\
	&=2^{\nu} \int_{0}^{\infty} x^{s+\nu-1}e^{-x}dx-\int_{0}^{\infty}\Gamma(\nu,2x)e^{x}x^{s} dx.
	\end{align*}	
The result follows from the definitions of the Gamma function. 
\end{proof}

\section{Polynomial growth harmonic Maass forms}\label{PolyGrowthHMFs}
\subsection{Notation}
Throughout, $k \in \frac{1}{2} \mathbb{Z}$, $\tau = x+iy \in \mathbb{H}$, with $x,y  \in \mathbb{R}$, $y>0$ and $N \in \mathbb N.$ Further $q:=e^{2 \pi i \tau}.$

We recall that $\SL_2(\mathbb{Z})$ acts on $\mathbb{H}$ by 
$$
\left(\begin{matrix} a & b \\ c & d \end{matrix} \right ) \cdot z := \frac{az+b}{cz+d}.
$$
We let $\Gamma_0(N)$ be the subgroup of $\SL_2(\mathbb{Z})$ defined by
$$
\Gamma_0(N) := \left\{ \left ( \begin{matrix} a & b \\ c & d \end{matrix} \right ) \in \SL_2(\mathbb{Z}): N|c \right\}.
$$

We choose the principal branch of the square-root throughout. The {\it (Petersson) slash operator} is defined as 
\begin{align*}
\left(f\vert_k\gamma\right)(\tau) := \begin{cases}
(c\tau+d)^{-k} f(\gamma\tau) & \text{if } k \in \mathbb{Z}, \\
\left(\frac{c}{d}\right)\varepsilon_d^{2k}(c\tau+d)^{-k} f(\gamma\tau) & \text{if } k \in \frac{1}{2}+\mathbb{Z},
\end{cases}
\quad \gamma = \left(\begin{matrix} a & b \\ c& d \end{matrix}\right) \in \begin{cases}
\operatorname{SL}_2(\mathbb{Z}) & \text{if } k \in \mathbb{Z}, \\
\Gamma_0(4) & k \in \frac{1}{2}+\mathbb{Z},
\end{cases}
\end{align*}
where $\left(\frac{c}{d}\right)$ denotes the Kronecker symbol, and
\begin{align*}
\varepsilon_d := \begin{cases}
1 & \text{if} \ d \equiv 1 \pmod{4}, \\
i & \text{if} \ d \equiv 3 \pmod{4},
\end{cases}
\end{align*}
($d$ is guaranteed to be odd whenever $\gamma \in \Gamma_0(4)$).  The {\it weight $k$ hyperbolic Laplace operator} is given by
\begin{align*}
\Delta_k := -y^2\left(\frac{\partial^2}{\partial x^2}+\frac{\partial^2}{\partial y^2}\right) + iky\left(\frac{\partial}{\partial x} + i\frac{\partial}{\partial y}\right),
\end{align*}
and this operator decomposes as 
\begin{align} \label{eq:Deltasplitting}
\Delta_{k} = - \xi_{2-k} \xi_{k},
\end{align}
where $\xi_k$ acts on functions $f(\tau)$ by $$\xi_k(f)(\tau)=2iy^k\overline{\frac{\partial f}{\partial \overline\tau}}.$$
In particular, holomorphic functions are automatically annihilated by $\Delta_k$.
Further, this operator intertwines with the slash operator in ``dual weights'' $k$ and $2-k$ in the sense that for all $f\colon \mathbb H\rightarrow\mathbb C$ and all $\gamma\in\Gamma,$
$$
\xi_k(f|_k\gamma)=\left(\xi_k(f)\right)|_{2-k}\gamma
.
$$

\subsection{Harmonic Maass forms of polynomial growth}
Suppose that $4|N$ whenever $k \in \frac{1}{2} + \mathbb{Z}$. Let $f\colon \mathbb{H} \to \mathbb{C}$ be a smooth function. 
\begin{defn} Let $\chi$ be a character of $\Gamma_0(N).$ We call $f$ a {\it harmonic Maass form of polynomial growth of weight $k$ and character $\chi$} on $\Gamma_0(N)$, if $f$ satisfies the following conditions:
\begin{enumerate}
\item For all $\gamma \in \Gamma_0(N)$, we have $f|_k \gamma = \chi(\gamma)f$.
\item We have $\Delta_k f = 0$.
\item The function $f$ has polynomial growth at all the cusps of $\Gamma_0(N)$.
\end{enumerate} \label{def:hmfpg}
\end{defn}
We let $H_k^{\text{Eis}}(N,\chi)$ denote the space of such functions. Sometimes we refer to harmonic Maass forms of polynomial growth as harmonic Maass--Eisenstein series, which is the basis for our notation $H_k^{\text{Eis}}$.

We retrieve the space of weight $k$ holomorphic modular forms for level $N$ as the spaces
$$M_k(N,\chi):=\{f \in H_k^{\text{Eis}}(N,\chi); \text{$f$ is holomorphic in $\mathbb H$}\}.$$ 

\begin{lemma}\label{lem:HM_with_moderate_classicalMF}
    If $k > 2$, then $H_k^{\text{Eis}}(N,\chi) = M_k(N,\chi)$. 
\end{lemma}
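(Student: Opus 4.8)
The plan is to show that membership in $H_k^{\text{Eis}}(N,\chi)$ with $k>2$ forces holomorphicity, which is the only non-trivial inclusion since $M_k(N,\chi) \subseteq H_k^{\text{Eis}}(N,\chi)$ holds by definition. First I would use the decomposition $\Delta_k = -\xi_{2-k}\xi_k$ from \eqref{eq:Deltasplitting}: if $f \in H_k^{\text{Eis}}(N,\chi)$ then $\Delta_k f = 0$, so $g := \xi_k(f)$ satisfies $\xi_{2-k}(g) = 0$, meaning $g$ is holomorphic. Moreover, by the intertwining property $\xi_k(f|_k\gamma) = (\xi_k f)|_{2-k}\gamma$, the function $g$ transforms like a holomorphic modular form of weight $2-k$ and character $\overline{\chi}$ on $\Gamma_0(N)$. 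The key point is then to control the growth of $g$ at the cusps: since $f$ has at most polynomial growth at every cusp and $\xi_k$ involves the factor $y^k$ together with a $\overline\tau$-derivative, one checks that $g$ also has at most polynomial growth at every cusp (the antiholomorphic derivative of a polynomially bounded harmonic function is again polynomially bounded, and the $y^k$ factor only contributes a further polynomial power).

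Next I would invoke the standard fact that a holomorphic function on $\mathbb H$ transforming with weight $2-k < 0$ under $\Gamma_0(N)$ and having at most polynomial growth at the cusps must be identically zero. This follows because the Fourier expansion of $g$ at each cusp has the form $\sum_{n \geq 0} c_n q_h^{n}$ (polynomial growth rules out principal parts and forces the expansion to start at $n\geq 0$ in the local parameter $q_h = e^{2\pi i \tau/h}$), so $g$ is a holomorphic modular form of negative weight $2-k$; but $M_{2-k}(N,\overline\chi) = \{0\}$ for $2-k<0$ by the valence formula, since a nonzero holomorphic modular form of negative weight would have negative total order of vanishing. Hence $g = \xi_k(f) = 0$, which by the definition $\xi_k(f) = 2iy^k\overline{\partial f/\partial\overline\tau}$ means $\partial f/\partial\overline\tau = 0$, i.e.\ $f$ is holomorphic on $\mathbb H$. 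Combined with the transformation law and polynomial growth at the cusps, this places $f \in M_k(N,\chi)$.

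The main obstacle I anticipate is the careful bookkeeping around the cusps: one must verify that "polynomial growth of $f$ at all cusps of $\Gamma_0(N)$" genuinely transfers to "polynomial growth of $g = \xi_k(f)$ at all cusps," and then that polynomial growth for a \emph{holomorphic} form is equivalent to having no principal part in its cuspidal Fourier expansions. The first requires noting that the slash operator conjugates the cusp at $\mathfrak{a} = \sigma_\mathfrak{a}\infty$ to the cusp at $\infty$ compatibly for $f$ (in weight $k$) and for $g$ (in weight $2-k$) via the intertwining relation, so a local growth bound $f|_k\sigma_\mathfrak{a} = O(y^M)$ as $y\to\infty$ yields $g|_{2-k}\sigma_\mathfrak{a} = \xi_k(f|_k\sigma_\mathfrak{a}) = O(y^{M'})$ for some $M'$, because differentiating a $q_h$-expansion with polynomially bounded coefficients and applying $y^k$ preserves polynomial boundedness. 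Once this is in place, the vanishing of $M_{2-k}(N,\overline\chi)$ for $2-k<0$ finishes the argument; this last input is entirely classical and can simply be cited.
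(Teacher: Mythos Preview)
Your proposal is correct and follows essentially the same approach as the paper's proof: use the factorization $\Delta_k = -\xi_{2-k}\xi_k$ and the intertwining property to see that $\xi_k(f)$ lies in $M_{2-k}(N,\overline{\chi})$, then invoke $M_{2-k}=\{0\}$ for $2-k<0$ to conclude $f$ is holomorphic. Your treatment is in fact more careful than the paper's (you track the character correctly as $\overline{\chi}$ and spell out the cusp-growth verification), but the strategy is identical.
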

\begin{proof}
If $f \in H_k^{Eis}(N,\chi)$, then by the above intertwining property of $\xi_k$, the decomposition of $\Delta_k$ into a composition of two $\xi$-operators, and the fact that the kernel of $\xi_k$ is the set of holomorphic functions, we have that $\xi_k(f) \in M_{2-k}(N,\chi)$. Since $k > 2$, $M_{2-k}(N,\chi)=\{0\}$ and thus $f$ is holomorphic.
\end{proof}

The polynomial growth condition and the equation $\Delta_k f = 0$ immediately imply the following canonical Fourier expansion of a harmonic Maass form of polynomial growth.
\begin{lemma}
    If $f \in H_k^{\text{Eis}}(N,\chi)$, then $f(\tau)$ has a Fourier expansion of the form
    \begin{equation}\label{eq:FourierExpansion}
    f(\tau) = \sum_{n=0}^{\infty} c^+(n) q^n + c^{-}(0) y^{1-k} + \sum_{n=1}^{\infty} c^-(n)\Gamma(1-k, 4 \pi n y) q^{-n}. 
    \end{equation}
\end{lemma}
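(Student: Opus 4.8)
The plan is to derive the Fourier expansion directly from conditions (2) and (3) in Definition \ref{def:hmfpg}. First I would use periodicity: condition (1) applied to $\gamma = \left(\begin{smallmatrix} 1 & 1 \\ 0 & 1 \end{smallmatrix}\right)$ (or the appropriate translation in $\Gamma_0(N)$, noting the character is trivial on it) shows $f(\tau+1) = f(\tau)$, so $f$ admits a Fourier expansion $f(\tau) = \sum_{n \in \mathbb{Z}} a_n(y) e^{2\pi i n x}$ with $y$-dependent coefficients. Substituting this into the equation $\Delta_k f = 0$ and separating variables, each $a_n(y)$ must satisfy a second-order linear ODE in $y$: for $n = 0$ one gets $-y^2 a_0'' + iky \cdot i a_0' = -y^2 a_0'' - ky a_0' = 0$, whose solutions are spanned by $1$ and $y^{1-k}$; for $n \neq 0$ one obtains an ODE whose two solutions can be written in terms of Whittaker functions, one of exponential growth and one of exponential decay in $|n|y$.

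Next I would impose the polynomial growth condition (3), at least at the cusp $\infty$, to kill the exponentially growing solutions. This forces the $n \neq 0$ part of $f$ to be a combination of the decaying Whittaker solutions only; the standard identity expressing the decaying Whittaker function in terms of the incomplete Gamma function then rewrites these terms as $c^-(n)\Gamma(1-k, 4\pi n y) q^{-n}$ for $n > 0$ and as $c^+(n) q^n$ for $n > 0$ (the holomorphic exponentials, which are trivially of polynomial growth). For the constant term $n = 0$, polynomial growth allows both $1$ and $y^{1-k}$ (the latter grows only polynomially), giving $c^+(0) + c^-(0) y^{1-k}$. Collecting these pieces yields exactly \eqref{eq:FourierExpansion}.

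The main obstacle — really the only subtle point — is the correct bookkeeping of which Whittaker solution is which and matching normalizations so that the incomplete Gamma function appears with the stated arguments; this is where one must be careful with the sign of $n$, the branch of $y^{1-k}$, and the precise form of $\Delta_k$. One should note that $\Gamma(1-k, 4\pi n y) e^{2\pi i n \tau} \sim e^{-2\pi n y} \cdot (4\pi n y)^{-k} e^{-2\pi n y}$ up to constants, i.e. it decays, so it is consistent with polynomial (indeed rapid) growth; the genuinely growing Whittaker solution is the one proportional to $e^{2\pi n y} q^{-n}$ type behavior, which is the one excluded. A clean way to organize this is to recall that $\xi_k f$ is holomorphic of weight $2-k$ (as in the proof of Lemma \ref{lem:HM_with_moderate_classicalMF}), so $f$ differs from a holomorphic form by a fixed non-holomorphic Eichler-type integral of a weight $2-k$ form; expanding that integral term-by-term produces precisely the incomplete Gamma terms. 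Either route is routine once the ODE analysis is in place, so I would present the separation-of-variables argument as the backbone and invoke the Whittaker-to-incomplete-Gamma identity to finish.
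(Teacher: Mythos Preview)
Your proposal is correct and matches the paper's approach: the paper does not give a detailed proof but simply states that the Fourier expansion follows immediately from the polynomial growth condition together with $\Delta_k f = 0$, which is precisely the separation-of-variables/Whittaker argument you outline. Your sketch is the standard way to unpack that ``immediately,'' and there is nothing to correct.
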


The functional equation for the $L$-series attached to modular forms involves the Fricke involution $w_N$. We define the action of $w_N$ on $f \in H_k^{\text{Eis}}(N,\chi)$ as
\begin{equation}\label{eq:fricke}
  f|w_N (\tau) :=  N^{k/2} (N \tau)^{-k} f(-1/N\tau)  
\end{equation}

\begin{proposition}\label{f|}
If $f \in H_k^{\text{Eis}}(N,\chi)$, then $$f|w_N \in 
\begin{cases}
   H_k^{\text{Eis}}(N, \overline{\chi} ) & k \in \mathbb{Z} \\
   H_k^{\text{Eis}}(N,\overline{\chi} \left ( \frac{N}{\bullet} \right ) ) & k \in \frac{1}{2} + \mathbb{Z}.
\end{cases}$$
\end{proposition}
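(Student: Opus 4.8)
The plan is to verify the three defining conditions of Definition \ref{def:hmfpg} for $g := f|w_N$, transferring each from the corresponding property of $f$. The cleanest route is to realize $w_N$ as (essentially) the slash action of the matrix $W_N = \left(\begin{smallmatrix} 0 & -1 \\ N & 0 \end{smallmatrix}\right)$ up to the automorphy factor conventions, and to use the fact that $W_N$ normalizes $\Gamma_0(N)$: for $\gamma = \left(\begin{smallmatrix} a & b \\ c & d \end{smallmatrix}\right) \in \Gamma_0(N)$ one computes $W_N \gamma W_N^{-1} = \left(\begin{smallmatrix} d & -c/N \\ -bN & a \end{smallmatrix}\right) \in \Gamma_0(N)$. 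First I would treat the integral-weight case, where $f|w_N = N^{-k/2} f|_k W_N$ and the slash operator is a genuine right action (no multiplier subtleties). Then $g|_k\gamma = N^{-k/2} f|_k (W_N\gamma) = N^{-k/2} f|_k (\gamma' W_N)$ with $\gamma' = W_N\gamma W_N^{-1} \in \Gamma_0(N)$, so $g|_k\gamma = \chi(\gamma')\, g$; since $\gamma \mapsto \chi(W_N\gamma W_N^{-1})$ is the character $\overline\chi$ (because $W_N^2 = -N\cdot I$ acts trivially in weight $k$ up to the scalar $(-1)^{-k}$, and $\Gamma_0(N)$ is its own ``opposite'' here — more precisely one checks $\chi(W_N\gamma W_N^{-1}) = \overline{\chi(\gamma)}$ using that $\chi$ is a Dirichlet character in $d \bmod N$ and the lower-left entry swap sends $d \mapsto a \equiv d^{-1} \bmod N$), we get condition (1) with character $\overline\chi$.

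For the half-integral weight case, the same computation goes through but one must track the theta-multiplier carefully: the Petersson slash for $k \in \frac12 + \mathbb Z$ is only a projective action on $\Gamma_0(4) \subseteq \Gamma_0(N)$, and conjugating by $W_N$ introduces the extra Kronecker symbol $\left(\frac{N}{\bullet}\right)$. Concretely, I would use the standard cocycle relation for $\varepsilon_d$ and the Kronecker symbol under the decomposition $W_N\gamma = \gamma' W_N$, which produces exactly the twist of $\overline\chi$ by $\left(\frac{N}{\bullet}\right)$ asserted in the statement; this bookkeeping — essentially the computation underlying the classical fact that the Fricke involution intertwines $\Gamma_0(N)$-forms of half-integral weight with forms of twisted character — is the main technical obstacle, and I would either cite a standard reference (e.g. Shimura's theory of half-integral weight forms) or do it by hand for the matrix $W_N$.

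Conditions (2) and (3) are then routine. For (2): the operator $\Delta_k$ is invariant under the weight-$k$ slash action of $\mathrm{GL}_2^+(\mathbb R)$ (it is built from the hyperbolic Laplacian, which commutes with all isometries of $\mathbb H$), so $\Delta_k g = \Delta_k(f|w_N) = (\Delta_k f)|w_N = 0$. For (3): the map $\tau \mapsto -1/(N\tau)$ permutes the cusps of $\Gamma_0(N)$ (it is an Atkin--Lehner involution on $X_0(N)$), and the automorphy factor $N^{k/2}(N\tau)^{-k}$ contributes only a power of the local parameter at each cusp; hence polynomial growth of $f$ at every cusp is equivalent to polynomial growth of $g$ at every cusp. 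I would state both of these as one-line consequences. Finally, I would remark that applying $w_N$ twice recovers $f$ up to a nonzero scalar (from $W_N^2 = -N I$), which is consistent with the involutivity of the character assignment $\chi \mapsto \overline\chi$ (resp. $\overline\chi\left(\frac{N}{\bullet}\right) \mapsto \chi$ since $\left(\frac{N}{\bullet}\right)^2$ is trivial on $(\mathbb Z/N\mathbb Z)^\times$), giving a sanity check on the half-integral weight twist.
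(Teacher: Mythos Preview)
Your proposal is correct and covers all three conditions; the paper's proof is considerably more terse and slightly different in emphasis. For condition (1), the paper simply cites \cite{ShanSingh22} in the integral case and Proposition~1.4 of \cite{Shimura1973} in the half-integral case, whereas you sketch the underlying conjugation-by-$W_N$ argument explicitly (and correctly identify that the half-integral cocycle bookkeeping is exactly Shimura's computation). For condition (2), you invoke the $\mathrm{GL}_2^+(\mathbb R)$-invariance of $\Delta_k$ under the weight-$k$ slash; the paper instead proves the intertwining relation $\xi_k(f|_k w_N) = Ni\,\xi_k(f)|_{2-k}w_N$ (their \eqref{eq:intertwining-property}) and deduces harmonicity via the decomposition $\Delta_k = -\xi_{2-k}\xi_k$. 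Both routes are valid, but note that the paper's $\xi_k$-intertwining formula is not merely a proof device here: it is reused later (see the computation of $f_1$ before Lemma~\ref{thm:shadow}), so if you go with your $\Delta_k$-invariance argument you would still need to establish \eqref{eq:intertwining-property} separately at that point. For condition (3), the paper leaves polynomial growth implicit in its citations, while your cusp-permutation argument makes it explicit; this is fine and arguably clearer.
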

\begin{proof}
This was stated in \cite{ShanSingh22} for integral $k$. For the half-integral $k$, the transformation law is a consequence of Proposition 1.4 of \cite{Shimura1973}. Further,
working as in the proof of Lemma 5.2 of \cite{thebook}, we deduce
\begin{align}\label{eq:intertwining-property}
     \xi_k (f|_k w_N) 
    = N i \xi_k (f) |_{2-k} w_N,
\end{align}
which verifies that $f|w_N$ is harmonic. 
\end{proof} 

We will also need the growth of coefficients of polynomial growth harmonic Maass forms. For negative integer $k$, we quote the following from \cite{ShanSingh22}.
\begin{lemma}[Proposition 3.1 (iv) of \cite{ShanSingh22}]
If $k \in \mathbb{Z}_{<0}$ and $f \in H_k^{\text{Eis}}(N, \chi)$ 
with Fourier expansion as in \eqref{eq:FourierExpansion}, then $c^{\pm}(n) = O(1)$ as $n \to \infty.$   
\end{lemma}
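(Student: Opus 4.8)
The plan is to treat the holomorphic coefficients $c^+(n)$ and the non-holomorphic coefficients $c^-(n)$ separately; in each case I would produce a genuine \emph{holomorphic} modular form of weight $2-k$ whose $n$-th Fourier coefficient is $n^{1-k}c^{\pm}(n)$ times a fixed nonzero constant, and then quote the classical growth estimate for coefficients of holomorphic modular forms. Recall that a holomorphic modular form of weight $w\ge 2$ for $\Gamma_0(N)$ has $n$-th Fourier coefficient $O(n^{w-1})$: its Eisenstein component has coefficients of exact order $n^{w-1}$, and its cuspidal component has coefficients $O(n^{w/2})$ by the Hecke bound. We use this with $w=2-k\ge 3$, where $w/2\le w-1=1-k$ because $k\le 0$, so that both pieces are $O(n^{1-k})$.

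For the $c^-(n)$ I would use the shadow operator. Since holomorphic functions lie in the kernel of $\xi_k$ and $\Delta_k=-\xi_{2-k}\xi_k$, the function $g:=\xi_k(f)$ is holomorphic; the intertwining relation $\xi_k(f|_k\gamma)=\xi_k(f)|_{2-k}\gamma$ together with the polynomial growth of $f$ show $g\in M_{2-k}(N,\chi')$ for the appropriate character $\chi'$, with $2-k\ge 3$ since $k\in\mathbb Z_{<0}$. Applying $\xi_k$ termwise to \eqref{eq:FourierExpansion} — using $\xi_k(y^{1-k})=1-k$ and, after a short computation, $\xi_k\bigl(\Gamma(1-k,4\pi ny)q^{-n}\bigr)=-(4\pi n)^{1-k}q^{n}$ — gives $g=(1-k)\overline{c^-(0)}-(4\pi)^{1-k}\sum_{n\ge 1}\overline{c^-(n)}\,n^{1-k}q^{n}$. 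The growth estimate applied to $g$ then forces $\overline{c^-(n)}\,n^{1-k}=O(n^{1-k})$, i.e.\ $c^-(n)=O(1)$.

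For the $c^+(n)$ I would use Bol's identity. Because $k$ is a negative integer, $1-k\ge 2$ is a positive integer, so $D^{1-k}:=\bigl(\tfrac1{2\pi i}\tfrac{d}{d\tau}\bigr)^{1-k}$ satisfies $D^{1-k}(f|_k\gamma)=(D^{1-k}f)|_{2-k}\gamma$, hence $D^{1-k}f$ transforms as a weight-$(2-k)$ form with character $\chi$. The crux is that $D^{1-k}$ annihilates the non-holomorphic part of \eqref{eq:FourierExpansion} up to an additive constant: the terms $\sum_{n\ge1}c^-(n)\Gamma(1-k,4\pi ny)q^{-n}$ form a non-holomorphic Eichler integral of the cuspidal part of $g$, whose $\tau$-dependence — apart from a $\partial_\tau$-constant conjugate endpoint — is a polynomial of degree $-k$, one less than the order of differentiation, so $\partial_\tau^{1-k}$ kills it; and $D^{1-k}(y^{1-k})$ is a constant. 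Therefore $D^{1-k}f=\mathrm{const}+\sum_{n\ge 1}c^+(n)\,n^{1-k}q^{n}$, a holomorphic $q$-expansion with non-negative exponents which, by polynomial growth of $f$ at the cusps, lies in $M_{2-k}(N,\chi)$. The same growth estimate then gives $c^+(n)\,n^{1-k}=O(n^{1-k})$, hence $c^+(n)=O(1)$.

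The step I expect to require the most care is the interaction of $D^{1-k}$ (and of $\xi_k$) with the non-holomorphic part of \eqref{eq:FourierExpansion} — in particular the claim that $D^{1-k}$ sends it to a mere constant — which must be verified through the explicit Fourier expansion together with the standard identification of that part with the non-holomorphic Eichler integral of $g$ (one also needs that $D^{1-k}f$ is genuinely holomorphic, not just weakly holomorphic, which is where polynomial growth is used); once this bookkeeping is complete, everything reduces to the classical Hecke bound. An alternative route, avoiding Bol's identity, is to first describe $H_k^{\text{Eis}}(N,\chi)$ for $k<0$ as spanned by explicit Eisenstein-type objects whose holomorphic Fourier coefficients are divisor sums $\sigma_{k-1}(n)$ (and $L$-value multiples of these), which are bounded because $k-1<-1$; this is essentially the viewpoint taken in \cite{ShanSingh22}.
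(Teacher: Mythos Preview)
Your argument is correct. The paper does not supply its own proof of this lemma; it is quoted verbatim from \cite{ShanSingh22}, so there is no in-paper argument to compare term by term. That said, your treatment of $c^-(n)$ via the shadow $\xi_k f\in M_{2-k}$ is precisely the mechanism the paper invokes a few lines later in the proof of Proposition~\ref{thm:MockCoeffsBound} for the remaining weights. Your treatment of $c^+(n)$ via Bol's identity is genuinely different from, and sharper than, the device used there: in Proposition~\ref{thm:MockCoeffsBound} the paper simply extracts the $n$-th Fourier coefficient by integration, bounds $|f(x+iy)|$ by a power of $y$, and sets $y=1/n$, which only yields $c^+(n)=O(n^B)$ for some $B$, not $O(1)$. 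The Bol operator $D^{1-k}$ requires $1-k\in\mathbb N$, which is exactly why the $O(1)$ statement is isolated to $k\in\mathbb Z_{<0}$; your observation that for such $k$ one has $\Gamma(1-k,4\pi ny)q^{-n}=(-k)!\,\bar q^{\,n}\sum_{j=0}^{-k}(4\pi ny)^j/j!$, a polynomial of degree $-k$ in $\tau$ times an antiholomorphic factor, so annihilated by $\partial_\tau^{1-k}$, is the clean way to justify this. One point worth making explicit in your write-up: concluding $D^{1-k}f\in M_{2-k}(N,\chi)$ rather than merely weakly holomorphic uses that the expansion of $f$ at \emph{every} cusp has the shape \eqref{eq:FourierExpansion}, so the same computation shows $D^{1-k}f$ has no principal part anywhere; this is exactly what polynomial growth at the cusps encodes, and it is the step you flagged as needing care.
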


We will also need to consider $k=0, 2$ and half-integer values of $k < 2$.
\begin{proposition}\label{thm:MockCoeffsBound}
  Let $k=0, \, 2$ or $k \in \frac{1}{2}+\mathbb Z$ with $k<2$. If $f \in H_k^{\text{Eis}}(N,\chi)$ with Fourier expansion as in \eqref{eq:FourierExpansion}, then $c^+(n) = O(n^{\mu^{+}})$ and $c^-(n) = O(n^{\mu^-})$ as $n \to \infty$ for some $\mu^{\pm} \in \mathbb{R}$.
\end{proposition}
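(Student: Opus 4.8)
The plan is to bound the coefficients $c^{\pm}(n)$ by relating $f$ to classical modular objects via the $\xi$-operator and by exploiting the Eisenstein-type nature forced by polynomial growth. First I would apply $\xi_k$ to $f$. By the decomposition $\Delta_k = -\xi_{2-k}\xi_k$ and the hypothesis $\Delta_k f = 0$, the function $g := \xi_k(f)$ lies in the kernel of $\xi_{2-k}$, hence is holomorphic; by the intertwining property $\xi_k(f|_k\gamma) = (\xi_k f)|_{2-k}\gamma$ together with condition (1), $g$ transforms like a weight $2-k$ form with the conjugate character, and the polynomial growth of $f$ at the cusps forces $g$ to have at worst polynomial (in fact, since $\xi_k$ kills the $y^{1-k}$ and $q^n$ terms appropriately, moderate) growth. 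Reading off the Fourier expansion \eqref{eq:FourierExpansion}, one computes $\xi_k(f) = -(1-k)\overline{c^-(0)}\,\cdot(\text{const}) + (4\pi)^{1-k}\sum_{n\ge1}\overline{c^-(n)}\,n^{1-k}q^n$ up to normalization, so $g$ is a holomorphic modular form of weight $2-k$ whose $n$-th coefficient is (a constant times) $\overline{c^-(n)}\,n^{1-k}$. For the three cases $k=0,2$ and $k\in\frac12+\mathbb Z$, $k<2$, the weight $2-k$ is $2,0$, or a half-integer $>0$, and the relevant spaces $M_{2-k}(N,\overline\chi)$ are finite-dimensional; the classical Hecke/Eisenstein bound (coefficients of holomorphic modular forms of weight $\kappa$ grow like $O(n^{\kappa-1+\epsilon})$, and polynomially in any case) then gives $c^-(n) = O(n^{\mu^-})$ for a suitable $\mu^-$. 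The case $k=0$, weight $2$, and $k=2$, weight $0$ (constants), are handled the same way, noting $M_0 = \mathbb C$.

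For the holomorphic part $c^+(n)$, I would argue that $f^+(\tau) := \sum_{n\ge0} c^+(n)q^n$ is a \emph{mock modular form} whose shadow is (a multiple of) $g = \xi_k(f)$, i.e. $f$ is the harmonic Maass form completing $f^+$. The standard strategy is then to subtract off a holomorphic modular form and an Eisenstein-type piece so that what remains is bounded. Concretely, since $f$ has polynomial growth at every cusp and $\Delta_k f=0$, the ``non-cuspidal'' behavior is captured by finitely many data at the cusps; one can write $f = (\text{holomorphic Eisenstein series, or Maass–Eisenstein series, in } H_k^{\text{Eis}}) + h$ where $h$ decays at all cusps. A harmonic Maass form of weight $k\le 2$ that is exponentially decaying at all cusps and has a shadow in the finite-dimensional space $M_{2-k}$ has holomorphic part with coefficients of polynomial growth — this follows from the circle method / Rademacher-type expansion for the coefficients of a mock modular form, which expresses $c^+(n)$ as a convergent series of Kloosterman-type sums times Bessel functions, yielding $c^+(n) = O(n^{\mu^+})$ with $\mu^+$ depending only on $k$ and the principal parts (which here are trivial since $f$ has no terms with $n<0$ in the holomorphic part). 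Alternatively, and perhaps more cleanly in this polynomial-growth setting, one notes that the completed object $\widehat f$ is a component of a vector-valued Maass–Eisenstein series attached to $\Gamma_0(N)$, for which the Fourier coefficients are known explicitly in terms of divisor-type sums and Dirichlet $L$-values (as the later sections of the paper make explicit for the Hurwitz case), and these are manifestly of polynomial growth.

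The main obstacle is the holomorphic part $c^+(n)$: the shadow argument immediately controls $c^-(n)$, but $c^+(n)$ is only the holomorphic part of a harmonic Maass form, not a coefficient of an actual modular form, so one cannot invoke classical bounds directly. The resolution I expect to use is that \emph{polynomial growth at the cusps} is a strong constraint: it restricts $f$ to essentially the span of genuine Eisenstein series (holomorphic and non-holomorphic), whose coefficients are explicit and polynomially bounded; equivalently, for $k=0,2$ and half-integral $k<2$ the space $H_k^{\text{Eis}}(N,\chi)$ is finite-dimensional and spanned by forms whose coefficients are understood. I would therefore structure the proof as: (i) $\xi_k f \in M_{2-k}(N,\overline\chi')$ (finite-dimensional, polynomial coefficient growth) $\Rightarrow$ bound on $c^-(n)$; (ii) identify $H_k^{\text{Eis}}(N,\chi)$ as finite-dimensional, spanned by (Maass–)Eisenstein series with explicit polynomially-bounded Fourier expansions, using the known classification (e.g. from \cite{ShanSingh22} for $k=0,2$ and the classical theory of half-integral weight Eisenstein series / Zagier's weight $3/2$ form for $k\in\frac12+\mathbb Z$) $\Rightarrow$ bound on $c^+(n)$; with $\mu^{\pm}$ taken to be the maximum of the finitely many exponents arising. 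The delicate point to get right is the half-integral case $k<2$, where one must invoke the appropriate nonholomorphic Eisenstein series and confirm the coefficient asymptotics, e.g. via Shimura/Cohen-type formulas expressing them through $L$-values, which are $O(n^{\epsilon})$ times polynomial factors by the convexity bound (Proposition~\ref{thm:pl}) applied to the relevant Dirichlet $L$-functions.
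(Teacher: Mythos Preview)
Your treatment of $c^-(n)$ matches the paper exactly: apply $\xi_k$, land in $M_{2-k}(N,\overline\chi)$, and invoke the polynomial coefficient growth of holomorphic modular forms.

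For $c^+(n)$, however, you are working much harder than necessary, and the routes you sketch are not fully justified. The paper's argument is the one-line ``trivial Hecke bound'': from
\[
c^+(n)e^{-2\pi n y}=\int_0^1 f(x+iy)e^{-2\pi i n x}\,dx
\]
and the uniform polynomial bound $|f(x+iy)|\le C y^{-A}$ (coming from polynomial growth at \emph{all} cusps, in particular as $y\to 0$), one sets $y=1/n$ and reads off $|c^+(n)|\ll n^{A}$. No structure theory, no Rademacher expansion, no explicit Eisenstein coefficients are needed.

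Your first alternative (Rademacher/circle method) is problematic here: those expansions are set up for harmonic Maass forms with nontrivial principal parts, and in the polynomial-growth setting with no negative-index terms the Poincar\'e series machinery does not straightforwardly produce $c^+(n)$. Your second alternative (classify $H_k^{\text{Eis}}$ as a span of explicit Eisenstein series with known coefficients) is correct in spirit and would eventually work, but it requires establishing finite-dimensionality and exhibiting a spanning set case by case---substantial work that you only gesture at, and which the elementary argument above renders unnecessary. The key point you missed is that ``polynomial growth at all cusps'' already gives a global polynomial bound on $|f|$ in the strip $0<y\le 1$, and that is all one needs.
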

\begin{proof}
 The operator $\xi_k$ maps any $f \in H_k^{\text{Eis}}(N, \chi)$ to a modular form of weight $2-k$. On the other hand, the $n$-th Fourier coefficient of $f$ is (a constant multiple of) $\overline{c^-(n)}n^{1-k}$. Since the Fourier coefficients of cusp forms have polynomial growth, so will $c^-(n).$ 
 For $c^+(n)$, we note that 
\begin{equation}\label{coeff}
c^+(n)e^{-2 \pi n y}=\int_0^1f(x+iy)e^{-2 \pi i n x}dx.
\end{equation}
Since $f$ has polynomial growth for all cusps, we deduce that there are $A, C>0$, such that $|f(\tau)| \le Cy^{A}$, for all $\tau \in \mathbb H.$ Therefore,
$$|c^+(n)|e^{-2 \pi n y} \le C y^A$$ and, for $y=1/n$, 
$c^+(n) \ll n^B$ for some $B.$
\end{proof}

\subsection{$L$-functions for polynomial growth harmonic Maass forms}
We set
$$f(\tau) = \sum_{n=0}^{\infty} a^+(n) q^n + a^{-}(0) y^{1-k}+ \sum_{n=1}^{\infty} a^-(n)\Gamma(1-k, 4 \pi n y) q^{-n} \in H_k^{Eis} (N, \chi),$$
and
\begin{align*}
g(\tau) &=f|w_N(\tau)= N^{k/2} (N \tau)^{-k} f(-1/N\tau) \\
&= \sum_{n=0}^{\infty} b^+(n) q^n + b^-(0) y^{1-k} + \sum_{n=1}^{\infty} b^-(n) \Gamma(1-k, 4 \pi n y) q^{-n}
\end{align*}
which, by Proposition \ref{f|} belongs to $H_k^{Eis} (N, \bar \chi),$ if $k \in \mathbb Z$ or to $H_k^{Eis} (N, \bar{\chi}  
\left ( \frac{N}{\bullet}\right )),$ if $k \in \frac{1}{2}+\mathbb Z.$  
For $Re(s) > \max \{ \mu^+_f, \mu^-_f\}+1,$
\begin{equation}\label{eq:completedLfcn}
\Lambda(f,s) := \left( \frac{\sqrt{N}}{2 \pi } \right)^s  \left( \Gamma(s) L^+(f,s) + W_{1-k}(s) L^-(f,s) \right),
\end{equation}
where
$W_{\nu} (s)$ is defined in the previous section and 
$L^{\pm}(f,s) := \sum_{n=1}^{\infty} \frac{a^{\pm} (n)}{n^s}.$

We can now establish the functional equation for $\Lambda(f, s)$.
\begin{theorem}\label{thm:ShankadharSingh}
Let $k \in \frac{1}{2}\mathbb{Z}$ 
. The function $\Lambda(f,s)$ has a meromorphic continuation to $\mathbb{C}$ with poles at  
$s =0,1,k-1,k$, with residues $-a^+(0), i^k b^-(0)N^{\frac{k-1}{2}}, -a^-(0) N^{\frac{k-1}{2}},i^k b^+(0)$.  Moreover, we have the functional equation
\begin{equation}\label{eq:fcnleq}
\Lambda ( f, s) = i^k \Lambda(g, k-s).
\end{equation}
\end{theorem}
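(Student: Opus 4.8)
The plan is to follow the classical Hecke--Riemann integral-representation argument, adapted to harmonic Maass forms of polynomial growth, in the style already used by Shankhadhar--Singh. First I would write down the Mellin transform
\[
I(s):=\int_0^{\infty} \left(f(it/\sqrt N)-a^+(0)-a^-(0)(t/\sqrt N)^{1-k}\right) t^{s-1}\,dt,
\]
and split the range of integration at $t=1$. On the range $t\in[1,\infty)$ the integral converges for all $s$ because the subtracted function decays exponentially; on the range $t\in(0,1]$ I would apply the Fricke transformation $f(it/\sqrt N)=N^{k/2}(it)^{-k}\,\big((i/t)^{?}\big)\cdots$, more precisely use \eqref{eq:fricke} in the form $f(-1/N\tau)=N^{-k/2}(N\tau)^k g(\tau)$ with $\tau=it/\sqrt N$, to convert the small-$t$ behaviour into an integral of $g$ over $[1,\infty)$, again exponentially convergent after subtracting the constant term and the $y^{1-k}$ term of $g$. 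Term-by-term Mellin inversion against the Fourier expansion \eqref{eq:FourierExpansion}, using $\int_0^\infty e^{-2\pi n t}t^{s-1}dt=\Gamma(s)(2\pi n)^{-s}$ for the holomorphic part and $\int_0^\infty \Gamma(1-k,4\pi n t)e^{2\pi n t}t^{s-1}dt=(2\pi n)^{-s}W_{1-k}(s)$ (which is exactly \eqref{eq:Wdef} after the substitution $x=2\pi n t$) for the non-holomorphic part, identifies $I(s)$ with $\Lambda(f,s)$ up to the elementary contributions of the subtracted constant/polynomial terms.

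Next I would collect those elementary contributions. Integrating the subtracted terms $a^+(0)$, $a^-(0)(t/\sqrt N)^{1-k}$ over $(0,1]$ and the corresponding terms $b^+(0)$, $b^-(0)$ from the $g$-side over $(0,1]$ after the Fricke flip produces four explicit rational functions of $s$ with simple poles at $s=0$, $s=1$, $s=k-1$, $s=k$; computing their residues gives precisely $-a^+(0)$, $i^k b^-(0)N^{(k-1)/2}$, $-a^-(0)N^{(k-1)/2}$, $i^k b^+(0)$ (the factors $i^k$ and $N^{(k-1)/2}$ coming from the constants in \eqref{eq:fricke} and, in the half-integral case, from the automorphy factor $\varepsilon_d$ bookkeeping recorded in Proposition \ref{f|}). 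Since $g=f|w_N$ and, by the involutive property of the Fricke operator, $g|w_N=i^{-2k}f$ (equivalently $g|w_N=(-1)^{-k}f$), the same integral $I(s)$ computed starting from the $g$-expansion equals $i^{k}\Lambda(g,k-s)$; matching the two evaluations yields the functional equation \eqref{eq:fcnleq}. Meromorphic continuation to all of $\mathbb C$ is then automatic: the two half-line integrals are entire, and the only poles are the explicit ones just described.

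The routine but slightly delicate points are: (i) justifying the interchange of sum and integral on $(0,1]$, which requires the polynomial growth of $a^\pm(n)$, $b^\pm(n)$ from Proposition \ref{thm:MockCoeffsBound} together with the exponential decay of $e^{-2\pi n t}$ and of $\Gamma(1-k,4\pi n t)e^{2\pi n t}$ (the latter decaying like $t^{-k}e^{-2\pi nt}$ by \cite{NIST:DLMF} 8.11.2) to get dominated convergence uniformly on vertical strips; and (ii) pinning down the exact constant $i^k$ and the $N$-powers in the half-integral-weight case, where the multiplier system makes $w_N$ square to $i^{-2k}$ rather than to the identity and the slash factor $\big(\tfrac cd\big)\varepsilon_d^{2k}$ must be tracked through \eqref{eq:fricke} and Proposition 1.4 of \cite{Shimura1973}. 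I expect step (ii)---the precise normalization of the Fricke involution and the resulting root number---to be the main obstacle; everything else is a standard unfolding once the integral representation $I(s)=\Lambda(f,s)$ and its mirror $I(s)=i^k\Lambda(g,k-s)$ are in place. I would present the integral $I(s)$ once, derive both identities from it by the two ways of splitting and transforming, and read off continuation, poles, residues, and \eqref{eq:fcnleq} simultaneously.
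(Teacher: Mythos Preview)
Your proposal is correct and follows essentially the same approach as the paper: write $\Lambda(f,s)$ as the Mellin transform of $f(it/\sqrt N)$ minus its constant and $y^{1-k}$ terms, split at $t=1$, apply the Fricke relation $f(i/(u\sqrt N))=i^k u^k g(iu/\sqrt N)$ on the $(0,1]$ piece, and read off the four polar terms and the functional equation (the latter by applying the same identity to $g$ via $g|w_N=i^{-2k}f$). Your worry about tracking $\varepsilon_d$ in step~(ii) is unnecessary here, since the Fricke involution \eqref{eq:fricke} is defined directly and the factor $i^k$ falls out of $(N\tau)^{-k}$ evaluated on the imaginary axis without any multiplier-system bookkeeping.
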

\begin{remark}
The analogue for negative integers $k$ is Theorem 1 of \cite{ShanSingh22}.
\end{remark}
\begin{proof}
While this result is stated for negative integral weights, the method in \cite{ShanSingh22} of splitting up the integral and making $u$-substitutions  works for half-integral $k < 1$ as well. 

Let $f^*(\tau):= f(\tau) - a^+(0) - a^-(0)y^{1-k}$ and $g^*(\tau):= g(\tau) - b^+(0) - b^-(0)y^{1-k}$. Then for $Re(s) > \max \{\mu_f^{\pm} +1,k-1\}$, we find that
\begin{align*}
\int_0^{\infty} &\left( f(it/\sqrt{N}) - a^+(0) - \frac{a^-(0)}{N^{(1-k)/2}} t^{1-k} \right) t^{s-1} dt \\
&= \sum_{n=1}^{\infty} a^+ (n) \int_0^{\infty} e^{-2 \pi nt/\sqrt{N}} t^{s-1} dt  + \sum_{n=1}^{\infty} a^- (n) \int_0^{\infty} e^{2 \pi nt/\sqrt{N}} \Gamma(1-k, 4 \pi nt/\sqrt{N}) t^{s-1} dt \\
&= \left( \frac{\sqrt{N}}{2 \pi } \right)^s  \left( \Gamma(s) L^+(f,s) + W_{1-k}(s) L^-(f,s) \right)= \Lambda(f,s).
\end{align*}
Since $f(i/n\sqrt{N})=f(-1/(N(ui/\sqrt{N}))=g(iu/\sqrt{N})(iu)^k,$ the change of variables $u=1/t$ gives
\begin{align} \label{eq:gamma_as_integrals}
\Lambda(f,s) 
&= \int_0^{1} f^*(it/\sqrt{N}) t^{s-1} dt  + \int_1^{\infty} f^*(it/\sqrt{N}) t^{s-1} dt \\ \nonumber
&= \int_1^{\infty} f(i/u\sqrt{N}) u^{-s-1} du + \int_1^{\infty} f^*(it/\sqrt{N}) t^{s-1} dt -\int_1^{\infty} (a^+(0) + a^-(0)(u \sqrt{N})^{k-1}) u^{-s-1} du  \\\nonumber
&= i^k \int_1^{\infty} g(iu/\sqrt{N}) u^{k-s-1} du + \int_1^{\infty} f^*(it/\sqrt{N}) t^{s-1} dt - \frac{a^+(0)}{s} - \frac{a^-(0)N^{\frac{k-1}{2}}}{s-k+1}\\\nonumber
&=  i^k \int_1^{\infty} g^*(iu/\sqrt{N}) u^{k-s-1} du +  \frac{ i^kb^+(0)}{s-k} + \frac{ i^kb^-(0)N^{\frac{k-1}{2}}}{ s-1}  - \frac{a^+(0)}{s} - \frac{a^-(0) N^{\frac{k-1}{2}}}{s-k+1} \\ \nonumber
& \hspace{2cm}+ \int_1^{\infty} f^*(it/\sqrt{N}) t^{s-1} dt   
\end{align}
Because of the exponential decay as $t \to \infty$ of $f^*(it)$ and $g^*(it)$, the two integrals are entire, and we see that $\Lambda(f,s)$ has simple poles at $s =0,1,k-1,k$, with residues $-a^+(0),  i^k b^-(0)N^{\frac{k-1}{2}}$, $-a^-(0) N^{\frac{k-1}{2}}$,  $i^k b^+(0)$. 
On the other hand, $g|w_N=(f|w_N)|w_N=i^{-2k}f$ and hence and application of \eqref{eq:gamma_as_integrals} to $g=f|w_N$ instead of $f$ gives, for $\Re(s) > \max \{\mu^{\pm}_g,k-1\}$,
\begin{align*}
    \Lambda(g,s)=
&
i^{-k} \int_1^{\infty} f^*(iu/\sqrt{N}) u^{k-s-1} du - \frac{ i^{-k}a^+(0)}{k-s} + \frac{ i^{-k}a^-(0)N^{\frac{k-1}{2}}}{ s-1}  - \frac{b^+(0)}{s} - \frac{b^-(0) N^{\frac{k-1}{2}}}{s-k+1} \\ 
   & \hspace{1cm} + \int_{1}^{\infty} g^*(it/\sqrt{N})t^{s-1} dt.
\end{align*} 
Replacing $s$ with $k-s$ and comparing to \eqref{eq:gamma_as_integrals}, we deduce the result.
\end{proof}

The next two bounds will be crucial for establishing the summation formula in Theorem \ref{thm:summationformula}.
\begin{lemma}\label{thm:LambdaStirlingBound}
Let $f \in H_k(N,\chi)$, and let $\alpha > 1 + \operatorname{max} \{\mu_f^{\pm} \}$. As $|t| \to \infty$,
$$
\Lambda(f, \alpha+it)=O_{\alpha,f} (|t|^{\alpha- \frac{1}{2} + \operatorname{max} \{ 1-k, 0 \}} e^{- \pi |t| /2}).
$$
\end{lemma}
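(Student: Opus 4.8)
The plan is to bound $\Lambda(f,\alpha+it)$ by combining the definition in \eqref{eq:completedLfcn} with the Stirling-type estimates already recorded in the excerpt. Recall that
$$
\Lambda(f,s) = \left( \frac{\sqrt{N}}{2\pi} \right)^s \left( \Gamma(s) L^+(f,s) + W_{1-k}(s) L^-(f,s) \right).
$$
On the vertical line $\Re(s) = \alpha$ with $\alpha > 1 + \max\{\mu_f^+,\mu_f^-\}$, both Dirichlet series $L^{\pm}(f,s) = \sum_n a^{\pm}(n) n^{-s}$ converge absolutely, since $a^{\pm}(n) = O(n^{\mu_f^{\pm}})$ by Proposition~\ref{thm:MockCoeffsBound} (or the preceding lemma in the negative integer case). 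Hence $L^{\pm}(f,\alpha+it) = O_{\alpha,f}(1)$, uniformly in $t$. Also $\left|(\sqrt{N}/(2\pi))^{\alpha+it}\right| = (\sqrt{N}/(2\pi))^{\alpha}$ is a constant. So everything reduces to estimating $\Gamma(\alpha+it)$ and $W_{1-k}(\alpha+it)$ as $|t|\to\infty$.

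Next I would invoke Proposition~\ref{thm:stirling} (Stirling's formula), which gives $\Gamma(\alpha+it) \sim |t|^{\alpha-1/2} e^{-\pi|t|/2}$, and Corollary~\ref{thm:Wbound}, which gives $W_{1-k}(\alpha+it) \ll_{\alpha,k} |t|^{\alpha + (1-k) - 1/2} e^{-\pi|t|/2}$, both valid for $\alpha > 0$ (note $\alpha > 1 + \max\{\mu_f^{\pm}\} \ge 1 > 0$, and $\alpha > \max\{-(1-k),0\} = \max\{k-1,0\}$ is needed for the $W$-bound to apply directly — this holds since $\alpha > 1 \ge k-1$ whenever $k \le 2$; for the values of $k$ covered by the earlier coefficient bounds, $k$ is either a negative integer, $0$, $2$, or half-integral $<2$, all of which satisfy $k-1 < 1 < \alpha$, except possibly $k=2$ where $k-1=1<\alpha$ still holds). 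Combining, the first term is $O(|t|^{\alpha-1/2}e^{-\pi|t|/2})$ and the second is $O(|t|^{\alpha+1/2-k}e^{-\pi|t|/2}) = O(|t|^{\alpha - 1/2 + (1-k)}e^{-\pi|t|/2})$. The dominant exponent is $\alpha - \tfrac12 + \max\{1-k,0\}$, which yields exactly the claimed bound
$$
\Lambda(f,\alpha+it) = O_{\alpha,f}\!\left( |t|^{\alpha - \frac{1}{2} + \max\{1-k,0\}} e^{-\pi|t|/2} \right).
$$

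The only genuinely delicate point — and the step I expect to require the most care — is checking that the hypotheses of Corollary~\ref{thm:Wbound} are met for all the weights $k$ under consideration, i.e.\ that $\alpha > 0$ suffices and that the implied constant is uniform in $t$; this is where one must be slightly careful about which space $H_k(N,\chi)$ the lemma is being asserted for and hence which range of $k$ is in play. Everything else is a routine assembly: absolute convergence of the $L$-series on the line $\Re(s)=\alpha$ from the coefficient bounds, constancy of the exponential prefactor's modulus, and termwise addition of the two Stirling-type asymptotics, keeping the larger of the two polynomial exponents. One should also remark that when $k \le 1$ we have $1-k \ge 0$ so $\max\{1-k,0\} = 1-k$ and the $W$-term dominates, while for $1 < k \le 2$ we have $\max\{1-k,0\}=0$ and the $\Gamma$-term dominates, so the stated exponent correctly interpolates both regimes.
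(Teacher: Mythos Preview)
Your proof is correct and follows essentially the same approach as the paper: bound $L^{\pm}(f,\alpha+it)$ by $O(1)$ using absolute convergence, apply Stirling's formula to $\Gamma(\alpha+it)$ and Corollary~\ref{thm:Wbound} to $W_{1-k}(\alpha+it)$, and take the larger of the two resulting polynomial exponents. The paper's proof is slightly terser and omits your discussion of which $k$-regime dominates, but the argument is the same.
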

\begin{proof} Since $\alpha > 1 + \operatorname{max} \{\mu_f^{\pm} \}$, 
$|L^{\pm} (f, \alpha + it)| \le \sum_{n = 1}^{\infty} |c^{\pm}(n)|/n^{\alpha} =O(1).$ With Theorem \ref{thm:st} and Corollary \ref{thm:Wbound}, we have as $|t| \to \infty$: 
\begin{align*}
\Lambda(f, \alpha+it) &=  \left( \frac{\sqrt{N}}{2 \pi } \right)^{\alpha + it}  \left( \Gamma(\alpha + it) L^+(f, \alpha + it) + W_{1-k}(\alpha + it) L^-(f,\alpha+it) \right)  \\
& \ll_{f, \alpha} |t|^{\alpha-1/2}e^{-\pi|t|/2} + |t|^{\alpha +1-k- \frac{1}{2}} e^{-\pi |t| /2} \\
\\
& \ll_{f, \alpha}|t|^{\alpha - 1/2 +\max\{0,1-k\}}e^{-\pi |t| /2}.
\end{align*}
\end{proof}
\begin{proposition}\label{thm:gamma_bound_for_large_t}
Suppose $f \in H_k(N,\chi)$ and let $g=f|w_N$. Let $\alpha >  \operatorname{max} \{1+\mu^{\pm}_{f},1+\mu^{\pm}_{g}, \frac{k}{2}\}$, $\rho>0$. For $\sigma \in [k-\alpha, \alpha]$, 
we have
$$
\left| \frac{\Lambda(f, \sigma+it) }{\Gamma(\rho + 1 + \sigma + it)} \right| = O(|t|^{-\sigma-1+\alpha-\rho+\max (1-k, 0)})
$$
as $|t|\to \infty$, uniformly for $\sigma$ in $[k-\alpha,\alpha]$.
\end{proposition}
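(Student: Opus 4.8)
The plan is to apply the Phragm\'en--Lindel\"of principle (Proposition~\ref{thm:pl}) to the function $\Phi(s):=\Lambda(f,s)/\Gamma(\rho+1+s)$ on the strip $k-\alpha\le \Re s\le\alpha$, interpolating between bounds on the two edges $\Re s=\alpha$ and $\Re s=k-\alpha$. This strip is nontrivial since $\alpha>k/2$, and $\Phi$ is holomorphic in each half-strip $\{k-\alpha\le\Re s\le\alpha,\ |\Im s|>\eta\}$ for any $\eta>0$, because the only poles of $\Lambda(f,\cdot)$ — at $s=0,1,k-1,k$ by Theorem~\ref{thm:ShankadharSingh} — lie on the real axis and division by $\Gamma(\rho+1+s)$ introduces none.

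First I would estimate $\Phi$ on $\Re s=\alpha$. Since $\alpha>1+\max\{\mu_f^{\pm}\}$, the Dirichlet series $L^{\pm}(f,s)$ converge absolutely and are $O(1)$ on this line; writing $\Lambda(f,s)=(\sqrt N/2\pi)^s\bigl(\Gamma(s)L^+(f,s)+W_{1-k}(s)L^-(f,s)\bigr)$ and dividing by $\Gamma(\rho+1+s)$, the factors $e^{-\pi|t|/2}$ cancel in both $\Gamma(s)/\Gamma(\rho+1+s)$ (by Stirling, Proposition~\ref{thm:stirling}) and $W_{1-k}(s)/\Gamma(\rho+1+s)$ (by Corollary~\ref{thm:Wbound}), yielding $\Phi(\alpha+it)=O(|t|^{-\rho-1}+|t|^{-\rho-k})=O(|t|^{-1-\rho+\max(0,1-k)})$. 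On $\Re s=k-\alpha$ I would instead use the functional equation $\Lambda(f,s)=i^k\Lambda(g,k-s)$ with $g=f|w_N$, apply the same computation to $g$ along the line $\Re(k-s)=\alpha$ (valid since $\alpha>1+\max\{\mu_g^{\pm}\}$), and note that the relevant ratios are now $\Gamma(k-s)/\Gamma(\rho+1+s)$ and $W_{1-k}(k-s)/\Gamma(\rho+1+s)$, which by Stirling are $O(|t|^{2\alpha-k-1-\rho})$ and $O(|t|^{2\alpha-2k-\rho})$; hence $\Phi(k-\alpha+it)=O(|t|^{2\alpha-k-1-\rho+\max(0,1-k)})$. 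Feeding the exponents $k_2=-1-\rho+\max(0,1-k)$ at $\Re s=\alpha$ and $k_1=2\alpha-k-1-\rho+\max(0,1-k)$ at $\Re s=k-\alpha$ into Proposition~\ref{thm:pl}, and using that $k_1-k_2=2\alpha-k$ is precisely the distance between the two edges (so the interpolant has slope $-1$), gives exponent $-\sigma-1+\alpha-\rho+\max(0,1-k)$ at $\Re s=\sigma$, which is exactly the claimed bound, uniformly in $\sigma\in[k-\alpha,\alpha]$.

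The delicate point, which I expect to be the main obstacle, is checking the hypothesis of Proposition~\ref{thm:pl} in the interior of the strip: that $\Phi(s)=O(e^{\varepsilon|t|})$ for every $\varepsilon>0$. The integral representations used in the proof of Theorem~\ref{thm:ShankadharSingh} give only $\Lambda(f,s)=O(1)$ in vertical strips, and since $1/\Gamma(\rho+1+s)\asymp|t|^{-\rho-1/2-\sigma}e^{\pi|t|/2}$ on $\Re s=\sigma$, this alone yields merely $\Phi(s)=O(e^{\pi|t|/2})$. To reach the subexponential bound required, one upgrades the interior estimate on $\Lambda(f,s)$ to genuine exponential decay $O(|t|^{A}e^{-\pi|t|/2})$: via the identity $\Lambda(f,s)=(\sqrt N/2\pi)^s(\Gamma(s)L^+(f,s)+W_{1-k}(s)L^-(f,s))$ (which persists after analytic continuation, $L^{\pm}$ being continued through $\Lambda$ and the shadow), together with Stirling and Corollary~\ref{thm:Wbound}, this reduces to the polynomial boundedness of $L^{\pm}(f,s)$ on vertical lines — for $L^-(f,s)$ this is the classical bound for the $L$-function of the weight $2-k$ holomorphic form $\xi_k(f)$, and for the Eisenstein-type $L^+(f,s)$ one argues similarly. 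Granting this, $\Phi$ is polynomially bounded throughout the strip, Proposition~\ref{thm:pl} applies verbatim, and the interpolation above finishes the proof.
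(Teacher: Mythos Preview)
Your argument is correct and follows the same line as the paper: bound $\Phi(s)=\Lambda(f,s)/\Gamma(\rho+1+s)$ on the two edges and interpolate by Phragm\'en--Lindel\"of. The only cosmetic difference is that at $\sigma=k-\alpha$ the paper writes
\[
\Phi(k-\alpha+it)=i^k\,\frac{\Lambda(g,\alpha-it)}{\Gamma(\rho+1+\alpha-it)}\cdot\frac{\Gamma(\rho+1+\alpha-it)}{\Gamma(\rho+1+k-\alpha+it)}
\]
and recycles the $\sigma=\alpha$ estimate on the first factor, whereas you estimate $\Gamma(k-s)/\Gamma(\rho+1+s)$ and $W_{1-k}(k-s)/\Gamma(\rho+1+s)$ directly; both routes give the same exponent and the same linear interpolant.

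One comment on your final paragraph. The concern is legitimate as you phrase it, but the fix is simpler than the one you outline. The integral representation \eqref{eq:gamma_as_integrals} already gives $\Lambda(f,s)=O(1)$ on the strip away from the real poles, so $\Phi(s)=O\bigl(|t|^{C}e^{\pi|t|/2}\bigr)$ there. This does fail the hypothesis of Proposition~\ref{thm:pl} \emph{as the paper states it}, but the result actually cited (Titchmarsh \S5.65) only requires the far weaker bound $|f(s)|\ll\exp(e^{\kappa|t|})$ with $\kappa<\pi/(b-a)$, which $O(e^{\pi|t|/2})$ satisfies trivially. Hence no separate convexity argument for $L^{\pm}(f,s)$ is needed---and your proposed route would in fact be harder, since ``argue similarly'' for $L^+(f,s)$ is not straightforward: unlike $L^-$, it is not the $L$-function of a holomorphic form, and from $\Lambda=O(1)$ one only recovers $L^+(f,s)=O(e^{\pi|t|/2})$ in the interior, not a polynomial bound.
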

\begin{proof} 
For $\sigma = \alpha,$ we use Lemma \ref{thm:LambdaStirlingBound} and Proposition~\ref{thm:st} to obtain as $|t| \to \infty$
\begin{align}\label{stirlingq}
\frac{\Lambda(f, \alpha+it) }{\Gamma(\rho + 1 + \alpha + it)} \ll \frac{ e^{- \pi |t| / 2} |t|^{\alpha - \frac{1}{2} + \operatorname{max}\{1-k,0 \}}}{|t|^{\rho +1 + \alpha- \frac{1}{2}} e^{- \pi |t|/2}} = O(|t|^{-\rho -1+\max (1-k, 0)}). 
\end{align}
When $\sigma = k-\alpha$, with Theorem \ref{thm:ShankadharSingh} and and application of \eqref{stirlingq} combined with Theorem \ref{thm:st}, we obtain, as $|t| \to \infty$,
$$\frac{\Lambda(f, k-\alpha+it) }{\Gamma(\rho + 1 + k-\alpha + it)}=
i^k\frac{\Lambda(g, \alpha-it) }{\Gamma(\rho + 1 + \alpha - it)} \cdot
\frac{\Gamma(\rho + 1 + \alpha - it)}{\Gamma(\rho + 1 + k-\alpha + it)} \ll |t|^{-\rho-1+\max(1-k, 0)+2 \alpha-k}.$$
Therefore, with Proposition~\ref{thm:pl}, we deduce the claim.
\end{proof}

\section{A summation formula for polynomial growth harmonic Maass forms}\label{SummationSection}
With the preparatory work of the previous sections, we are now ready to state and prove our summation formula. 
\begin{theorem}\label{thm:summationformula}
Let
\begin{equation}
    \label{FEf}
f(\tau) = \sum_{n=0}^{\infty} a^+(n) q^n + a^-(0) y^{1-k} + \sum_{n=1}^{\infty} a^-(n) \Gamma(1-k, 4 \pi n y) q^{-n} \in H_k^{\text{Eis}} (N, \chi),
\end{equation}
and
$$
g(\tau) = f | w_N (\tau) = \sum_{n=0}^{\infty} b^+(n) q^n + b^-(0) y^{1-k} + \sum_{n=1}^{\infty} b^-(n) \Gamma(1-k, 4 \pi n y) q^{-n},
$$
and assume that $\mu_f^{\pm}, \mu_g^{\pm} \in \mathbb{R}^+$ are such that $a^{\pm}(n) = O(n^{\mu_f^{\pm}})$ and $b^{\pm}(n) = O(n^{\mu_g^{\pm}})$, respectively. 
We choose $\rho > \rho_0 - \frac{1}{2}$, where
$$
\rho_0 := \begin{cases}
\operatorname{max} \{ 2+ 2\mu_f^{\pm} -k, 2 + 2\mu_g^{\pm} - k, k \} & k \ge 1, \\
\operatorname{max} \{ 3 + 2\mu_f^{\pm} -2k, 3 + 2\mu_g^{\pm} - 2k \} & k <1.    
\end{cases}
$$

Then we have
\begin{align} \label{eq:sumformula}
&\frac{1}{\Gamma(\rho+1)}\sum_{n\leq x}a^{+}(n)(x-n)^{\rho} +\frac{x^{\rho}}{2 \pi i } \sum_{n \le x} a^-(n) g_{\rho}\left(n, x \right) - Q_{\rho}(x) \nonumber\\
&=-i^k x^{(\rho + k)/2}\left(\frac{\sqrt{N}}{2\pi }\right)^{\rho}\sum_{n=1}^{\infty}\frac{b^{+}(n)}{n^{(\rho+k)/2}}J_{\rho+k}\left(4\pi\sqrt{\frac{nx}{N}}\right)\nonumber\\
&- i^k x^{(\rho+1)/2}\left(\frac{\sqrt{N}}{2\pi}\right)^{\rho+k-1}\sum_{n=1}^{\infty} \frac{b^-(n)}{n^{(\rho-1)/2+k}}\int_{0}^{1/2}\frac{u^{k+(\rho-3)/2}}{(1-u)^{(1+\rho)/2}
}J_{\rho+1}\left(4\pi\sqrt{\frac{nx(1-u)}{Nu}}\right)du
\end{align}
where
\begin{equation}
    \label{Qrho}
Q_{\rho}(x) = x^{\rho} \left( -\frac{a^+(0)}{\Gamma(\rho + 1)} + \frac{2 \pi x N^{\frac{k}{2}-1} b^-(0) i^k}{\Gamma(\rho + 2)} - \frac{ a^-(0) x^{k-1} ( 2 \pi )^{k-1}}{ \Gamma(\rho +  k)}  + \frac{b^{+}(0) i^k x^{k} (2 \pi)^k}{N^{\frac{k}{2}} \Gamma(\rho + k + 1)} \right)
\end{equation}
  and 
$g_{\rho}(n,x)$ as given in Theorem~\eqref{thm:perrongen}.
\end{theorem}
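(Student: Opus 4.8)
The plan is to run the Chandrasekharan--Narasimhan contour-shifting argument, now carried out simultaneously for the holomorphic and non-holomorphic parts of $f$, with the Bessel integrals coming from the inverse Mellin transforms of $\Gamma(s)$ and $W_{1-k}(s)$. First I would apply Proposition~\ref{thm:perron} to the Dirichlet series $L^+(f,s) = \sum a^+(n) n^{-s}$ and Theorem~\ref{thm:perrongen} (with $b(n) = a^-(n)$, $r = x$) to $L^-(f,s)$, in each case with the Mellin kernel $\Gamma(s)x^{s+\rho}/\Gamma(\rho+1+s)$ respectively $W_{1-k}(s)x^s/\Gamma(\rho+1+s)$. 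Adding the two and inserting the factor $(\sqrt N/2\pi)^s$ appropriately, the left-hand side $\frac{1}{\Gamma(\rho+1)}\sum_{n\le x}a^+(n)(x-n)^\rho + \frac{x^\rho}{2\pi i}\sum_{n\le x}a^-(n)g_\rho(n,x)$ becomes a single integral
$$
\frac{1}{2\pi i}\int_{(\alpha)} \frac{\Lambda(f,s)\, (2\pi/\sqrt N)^{-\rho}\, x^{\rho}}{\Gamma(\rho+1+s)} \left( \frac{2\pi x}{\sqrt N} \right)^{s} \frac{ds}{(2\pi/\sqrt N)^{s}} \cdot (\text{bookkeeping constant}),
$$
i.e.\ up to an explicit power of $x$ and $2\pi/\sqrt N$ it is $\frac{1}{2\pi i}\int_{(\alpha)} \Lambda(f,s)\,\Xi(s)\,ds$ with $\Xi(s) = x^{s+\rho}N^{?}/\Gamma(\rho+1+s)$; here $\alpha$ is taken larger than $1 + \max\{\mu_f^\pm\}$ so the Dirichlet series converge and the swap of sum and integral is justified exactly as in the proof of Theorem~\ref{thm:perrongen}.

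Next I would shift the contour from $\Re(s) = \alpha$ to $\Re(s) = k - \alpha$. By the choice of $\rho > \rho_0 - \tfrac12$ one checks that $\alpha$ can be chosen so that $k - \alpha$ lies to the left of all four poles $s = 0,1,k-1,k$ of $\Lambda(f,s)$ (Theorem~\ref{thm:ShankadharSingh}), and Proposition~\ref{thm:gamma_bound_for_large_t} gives the decay of $\Lambda(f,s)/\Gamma(\rho+1+s)$ on horizontal segments needed to push the contour without boundary contributions. The residues at $s = 0,1,k-1,k$, with residues of $\Lambda(f,s)$ equal to $-a^+(0),\, i^k b^-(0)N^{(k-1)/2},\, -a^-(0)N^{(k-1)/2},\, i^k b^+(0)$, produce exactly the four terms assembled in $Q_\rho(x)$ (each residue picking up the value of $\Xi(s)$ at the corresponding pole, which gives the stated $x^\rho, x^{\rho+1}, x^{\rho+k-1}, x^{\rho+k}$ powers and the $\Gamma(\rho+1), \Gamma(\rho+2), \Gamma(\rho+k), \Gamma(\rho+k+1)$ denominators). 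On the shifted line I would apply the functional equation $\Lambda(f,s) = i^k \Lambda(g,k-s)$ and substitute $s \mapsto k-s$, converting the integral into $i^k \cdot \frac{1}{2\pi i}\int_{(\alpha)} \Lambda(g,s)\,\widetilde\Xi(s)\,ds$ for a suitably transformed kernel.

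Finally, I would expand $\Lambda(g,s) = (\sqrt N/2\pi)^s(\Gamma(s)L^+(g,s) + W_{1-k}(s)L^-(g,s))$, swap sum and integral once more, and evaluate the two resulting inverse Mellin transforms term by term. The $\Gamma(s)$-piece gives, via the classical formula $\frac{1}{2\pi i}\int_{(\alpha)}\frac{\Gamma(s)}{\Gamma(\rho+1+s)}z^{-s}ds$ being a Bessel function (the standard representation $J_\nu$; cf.\ the kernel already used in Theorem~\ref{thm:perrongen} but now with the full $\Gamma(s)$ rather than its $W$-twist), the term $-i^k x^{(\rho+k)/2}(\sqrt N/2\pi)^\rho \sum b^+(n) n^{-(\rho+k)/2} J_{\rho+k}(4\pi\sqrt{nx/N})$; the $W_{1-k}(s)$-piece, using Lemma~\ref{thm:Wformula} to write $W_{1-k}(s) = \Gamma(s+1-k)\int_0^{1/2} u^{s-1}(1-u)^{k-s-1}du$ and then pulling the $u$-integral outside and recognizing the remaining $s$-integral as another $J_{\rho+1}$-Bessel, yields the double integral term. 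The constants and powers of $x$, $n$, $N$, $2\pi$ are tracked through these substitutions.

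The main obstacle I anticipate is the bookkeeping in two places: first, justifying that the contour shift is legitimate and that no poles are missed — this requires carefully combining Proposition~\ref{thm:gamma_bound_for_large_t} (decay on horizontal lines, valid precisely because $\alpha > \max\{1+\mu_f^\pm, 1+\mu_g^\pm, k/2\}$ and $\sigma \in [k-\alpha,\alpha]$) with the pole structure of Theorem~\ref{thm:ShankadharSingh}, and checking that the hypothesis $\rho > \rho_0 - \tfrac12$ is exactly what makes the four residue terms absolutely convergent against the Bessel sums on the right; and second, the identification of the two inverse Mellin transforms as the specific Bessel expressions with the correct index ($\rho+k$ in one case, $\rho+1$ in the other) and the correct normalizing powers — here the nontrivial input is the factorization of $W_{1-k}$ from Lemma~\ref{thm:Wformula}, which turns the non-holomorphic contribution into a $u$-averaged ordinary Bessel transform. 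Everything else is the routine transcription of the Chandrasekharan--Narasimhan scheme, now applied to $\Lambda(f,s)$ in place of a classical completed $L$-function.
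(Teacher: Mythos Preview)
Your overall plan---combine the Perron formula for $L^+$ with Theorem~\ref{thm:perrongen} for $L^-$ into a single integral of $\Lambda(f,s)/\Gamma(\rho+1+s)$, shift the contour to $\Re(s)=k-\alpha$, collect the four residues into $Q_\rho(x)$, apply the functional equation, and evaluate the resulting inverse Mellin transforms via the Bessel identity and the factorization of $W_{1-k}$ from Lemma~\ref{thm:Wformula}---is exactly the scheme the paper follows, and your identification of the key obstacles is accurate.

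There is, however, one genuine gap. The contour-shifting step does \emph{not} work directly on the full range $\rho>\rho_0-\tfrac12$. To carry out the argument you need an $\alpha$ satisfying
\[
\max\{1+\mu_f^{\pm},\,1+\mu_g^{\pm},\,k,\,k/2\}<\alpha<\min\Bigl\{\tfrac{k+\rho}{2},\,\tfrac{\rho+2k-1}{2}\Bigr\},
\]
where the upper bounds on $\alpha$ are forced by the validity of the inverse Mellin--Bessel formula (for the $\Gamma(s)$ piece) and by the interchange of the $u$-integral with the $s$-integral (for the $W_{1-k}(s)$ piece). Unwinding these inequalities gives exactly $\rho>\rho_0$, not $\rho>\rho_0-\tfrac12$. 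So your argument, as written, only establishes \eqref{eq:sumformula} for $\rho>\rho_0$.

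The paper closes this half-unit gap by a separate differentiation argument. Writing $y=\sqrt{x}$ and letting $A_\rho(y)$ denote the right-hand side and $t_\rho(x)$ the left-hand side, one checks via the Bessel recurrence $(z^\nu J_\nu(z))'=z^\nu J_{\nu-1}(z)$ that $A'_{\rho+1}(y)=2y\,A_\rho(y)$, and by direct computation (using the integral form of the $g_\rho$-sum from Theorem~\ref{thm:perrongen}) that $\tfrac{d}{dy}t_{\rho+1}(y^2)=2y\,t_\rho(y^2)$. Since the right-hand side converges \emph{absolutely} for $\rho>\rho_0-\tfrac12$ (this is where the $-\tfrac12$ comes from, via $J_\nu(x)\ll x^{-1/2}$), the series for $A_\rho$ is uniformly convergent on compacta, so term-by-term differentiation is legitimate; applying the already-proved identity at $\rho+1>\rho_0$ and differentiating then gives the identity at $\rho$. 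You should add this extension step to your outline.
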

\begin{remark}\begin{enumerate}
    \item Given Lemma \ref{lem:HM_with_moderate_classicalMF}, it is natural to assume $k \le 2$ because for $k>2$, the harmonic Maass form must be a holomorphic modular form. In this case one can apply a classical summation formula such as Voronoi summation \cite{copston}.
    \item When $f$ is holomorphic modular form, and thus $a^-(n) = 0 = b^-(n)$, we retrieve the formula given by Lemma 5 of \cite{ChandNaras61}. \end{enumerate}\end{remark}
\begin{proof}
We will first prove \eqref{eq:sumformula} for $\rho>\rho_0$. 
Let $\alpha \in \mathbb{R}$ satisfy
\begin{equation}\label{range} \operatorname{max} \{1+ \mu_f^{\pm},1 + \mu_g^{\pm}, k, k/2\} < \alpha < \operatorname{min} \Bigg\{\frac{k+\rho}{2}, \frac{\rho + 2k-1}{2} \Bigg\}.
\end{equation}
Note that our lower bound on $\rho$ makes such a choice possible. Then Proposition~\ref{thm:perron} gives 
\begin{align}\label{1}
\frac{1}{\Gamma(\rho+1)}\sum_{n\leq x} &a^{+}(n)(x-n)^{\rho}=\frac{1}{2\pi i}\int_{(\alpha)}\frac{\Gamma(s)L^{+}(f,s)}{\Gamma(\rho+1+s)}x^{s+\rho}\ ds\nonumber\\
&=\frac{x^\rho}{2\pi i}\int_{(\alpha)}\frac{\Lambda(f,s)}{\Gamma(\rho+1+s)}\left(\frac{2\pi x}{\sqrt{N}} \right)^{s} ds  -\frac{x^\rho}{2\pi i}\int_{(\alpha)}\frac{W_{1-k}(s)L^{-}(f,s)}{\Gamma(\rho+1+s)} x^s  ds.\nonumber
\end{align}

We will first evaluate the first integral on the right-hand side in \eqref{1} by shifting the line of integration to $k-\alpha<0$. By Theorem \ref{thm:ShankadharSingh}, $\Lambda(f,s)$ has poles at $s=0,1,k-1$, and $k$, by Cauchy's Residue Theorem,  we find
\begin{align}
\frac{x^{\rho}}{2\pi i}\int_{(\alpha)}\frac{\Lambda(f,s)}{\Gamma(\rho+1+s)}\left(\frac{2\pi x}{\sqrt{N}} \right)^{s}\ ds= \sum_{j \in \{0,1,k-1,k \}} R_j +\frac{x^{\rho}}{2\pi i}\int_{(k-\alpha)}\frac{\Lambda(f,s)}{\Gamma(\rho+1+s)}\left(\frac{2\pi x}{\sqrt{N}} \right)^{s}\ ds,
\end{align}
where $R_j$ is the residue of $\frac{x^{\rho} \Lambda(f,s)}{\Gamma ( \rho + 1 + s)} \left( \frac{2 \pi x}{\sqrt{N}} \right)^s$ at $s=j$. Note that, by Proposition \ref{thm:gamma_bound_for_large_t}, the contribution from the upper and lower horizontal line integrals tends to zero.
Set $ Q_{\rho} (x) =R_0+R_1+R_k+R_{k-1}$, referred to as the residual function. Using Theorem \ref{thm:ShankadharSingh}, we have
\begin{align*}
Q_{\rho} (x) &= -\frac{a^+(0) x^{\rho}}{\Gamma(\rho + 1)} + \frac{2 \pi x^{1 + \rho} N^{\frac{k-1}{2}} b^-(0) i^k}{N^{\frac{1}{2}}\Gamma(\rho + 2)} - \frac{ a^-(0) N^{\frac{k-1}{2}} x^{k-1+\rho} ( 2 \pi )^{k-1}}{N^{\frac{k-1}{2}} \Gamma(\rho +  k)}  + \frac{b^{+}(0) i^k x^{k + \rho} (2 \pi)^k}{N^{\frac{k}{2}} \Gamma(\rho + k + 1)} \\
&= x^{\rho} \left( -\frac{a^+(0)}{\Gamma(\rho + 1)} + \frac{2 \pi x N^{\frac{k}{2}-1} b^-(0) i^k}{\Gamma(\rho + 2)} - \frac{ a^-(0) x^{k-1} ( 2 \pi )^{k-1}}{ \Gamma(\rho +  k)}  + \frac{b^{+}(0) i^k x^{k} (2 \pi)^k}{N^{\frac{k}{2}} \Gamma(\rho + k + 1)} \right).
\end{align*}
Using the functional equation \eqref{eq:fcnleq} and taking into account the range of $\alpha$, we obtain
\begin{multline}\label{eq:4.3}
\frac{x^\rho}{2\pi i}\int_{(\alpha)}\frac{\Lambda(f,s)}{\Gamma(\rho+1+s)}\left(\frac{2\pi x}{\sqrt{N}} \right)^{s}\ ds = Q_{\rho}(x) +\frac{x^\rho}{2\pi i} i^k \int_{(k-\alpha)}\frac{\Lambda(g,k-s)}{\Gamma(\rho+1+s)}\left(\frac{2\pi x}{\sqrt{N}} \right)^{s}\ ds \\
= Q_{\rho}(x) - \frac{i^k x^\rho}{2\pi i} \left(\frac{2\pi x}{\sqrt{N}} \right)^{k} \int_{(\alpha)}\frac{\Lambda(g,s)}{\Gamma(\rho+1+k-s)}\left(\frac{2\pi x}{\sqrt{N}} \right)^{-s}\ ds  \\
= Q_{\rho}(x)-i^k x^\rho \left(\frac{2\pi x}{\sqrt{N}} \right)^{k} \sum_{n=1}^{\infty} b^+(n) \frac{1}{2\pi i}\int_{(\alpha)}\frac{\Gamma(s)}{ \Gamma(\rho+1+k-s)}\left(\frac{4\pi^2 nx}{N} \right)^{-s}\ ds  \\ 
 - i^k x^\rho \left(\frac{2\pi x}{\sqrt{N}} \right)^{k} \sum_{n=1}^{\infty} b^-(n)  \frac{1}{2\pi i}\int_{(\alpha)}\frac{W_{1-k}(s) }{ \Gamma(\rho+1+k-s)}\left(\frac{4\pi^2 nx}{N} \right)^{-s}\ ds. 
\end{multline}
The interchange of summation and integration in the first (resp. second) integral is justified by Proposition \ref{thm:stirling} (resp. Corollary \ref{thm:Wbound}) together with the choice of $\alpha$ in \eqref{range}.
With this, our summation formula is 
\begin{align}\label{2}
\frac{1}{\Gamma(\rho+1)}\sum_{n\leq x} &a^{+}(n)(x-n)^{\rho} = Q_{\rho}(x)\\
&-i^k x^\rho \left(\frac{2\pi x}{\sqrt{N}} \right)^{k} \sum_{n=1}^{\infty} b^+(n) \frac{1}{2\pi i}\int_{(\alpha)}\frac{\Gamma(s)}{ \Gamma(\rho+1+k-s)}\left(\frac{4\pi^2 nx}{N} \right)^{-s}\ ds\nonumber  \\ 
& - i^k x^\rho \left(\frac{2\pi x}{\sqrt{N}} \right)^{k} \sum_{n=1}^{\infty} b^-(n)  \frac{1}{2\pi i}\int_{(\alpha)}\frac{W_{1-k}(s) }{ \Gamma(\rho+1+k-s)}\left(\frac{4\pi^2 nx}{N} \right)^{-s}\ ds\nonumber\\ 
&-\frac{x^\rho}{2\pi i}\int_{(\alpha)}\frac{W_{1-k}(s)L^{-}(f,s)}{\Gamma(\rho+1+s)} x^s  ds.\nonumber
\end{align} 
We will now show closed formulas for each contour integral in the right-hand side of \eqref{2}. 
For the first one we employ \cite[7.3(23)]{erd}:
  \begin{equation}\label{eq:besselj1}
  \frac{1}{2\pi i}\int_{(\alpha)} \frac{\Gamma(s)}{\Gamma(\rho+k-s+1)} z^{-s} ds = z^{-(\rho+k)/2}J_{\rho+k}(2\sqrt{z}).
  \end{equation}
which holds because $0 < \alpha \le \frac{\rho+k}{2}$. We obtain
\begin{equation} 
\label{5.5}
    \frac{1}{2\pi i}\sum_{n=1}^{\infty} b^+(n) \int_{(\alpha)}\frac{\Gamma(s)}{\Gamma(\rho+1+k-s)}\left(\frac{4\pi^2 nx}{N} \right)^{-s}\ ds=\left(\frac{\sqrt{N}}{2\pi \sqrt{x}}\right)^{k+\rho}\sum_{n=1}^{\infty}\frac{b^{+}(n)}{n^{\frac{\rho+k}{2}}}J_{\rho+k}\left(4\pi\sqrt{\frac{nx}{N}}\right). 
\end{equation}
For the second sum in the right-hand side of \eqref{2}, we employ Lemma \ref{thm:Wformula} to get
 \begin{multline}
\sum_{n=1}^{\infty} b^-(n) \frac{1}{2\pi i} \int_{(\alpha)}\frac{W_{1-k}(s) }{ \Gamma(\rho+1+k-s)}\left(\frac{4\pi^2 nx}{N} \right)^{-s}\ ds\nonumber\\
= \sum_{n=1}^{\infty} b^-(n) \frac{1}{2\pi i} \int_{(\alpha)}  \frac{\Gamma( s + 1 -k) B(\frac{1}{2}, s, k-s)}{\Gamma(\rho + 1 +k- s)}\left(\frac{4\pi^2 nx}{N} \right)^{-s}ds \nonumber\\
=\sum_{n=1}^{\infty} b^-(n) \int_{0}^{1/2}u^{-1}(1-u)^{k-1}\frac{1}{2\pi i}\int_{(\alpha)}  \frac{\Gamma( s + 1 -k)}{\Gamma(\rho + 1 +k- s)}\left(\frac{4\pi^2 x n(1-u)}{Nu} \right)^{-s}ds\ du \nonumber\\
=\sum_{n=1}^{\infty} b^-(n) \int_{0}^{1/2}u^{-1}(1-u)^{k-1}\frac{1}{2\pi i}\int_{(\alpha + 1 -k)}  \frac{\Gamma( v)}{\Gamma(\rho + 2 - v)}\left(\frac{4\pi^2 x n(1-u)}{Nu} \right)^{-(v + k -1)}dv\ du.  \nonumber
\end{multline}
The interchange of integrals at the second equality is justified by the absolute convergence of the inner integral guaranteed by $\alpha < \frac{\rho + 2k-1}{2}$. With \eqref{eq:besselj1}, this becomes
\begin{align}
&\sum_{n=1}^{\infty} b^-(n) \int_{0}^{1/2}u^{-1}(1-u)^{k-1} \left(\frac{4\pi^2 x n(1-u)}{Nu} \right)^{-(k -1) - \frac{\rho + 1}{2}}  J_{\rho + 1} \left (2 \sqrt{ \frac{4\pi^2 x n(1-u)}{Nu}} \right )du.
\label{5.6}
\end{align}
The identity for $\rho>\rho_0$ then follows by substituting into \eqref{2} according to \eqref{5.5}, \eqref{5.6} and the identity of Theorem \ref{thm:perrongen}. 

Having proved \eqref{eq:sumformula} for $\rho>\rho_0$, we can now extend its range of validity based on the observation that both series in its right-hand side converges absolutely for $\rho>\rho_0-\frac12.$ This is a direct consequence of the bounds 
\begin{align}\label{series}
 &  x^{\frac{\rho +k}{2}}\sum_{n=1}^{\infty}\frac{b^{+}(n)}{n^{(\rho+k)/2}}J_{\rho+k}\left(4\pi\sqrt{\frac{nx}{N}}\right) \ll
x^{\frac{\rho +k}{2}-\frac14}\sum_{n=1}^{\infty}\frac{1}{n^{\frac{\rho+k}{2}-\mu_g^+ +\frac14}} \nonumber \\
&\text{and} \nonumber
\\ \nonumber
&  x^{\frac{\rho +1}{2}}\sum_{n=1}^{\infty} \frac{b^-(n)}{n^{\frac{\rho-1}{2}+k}}\int_{0}^{\frac12}\frac{u^{k+\frac{\rho-3}{2}}}{(1-u)^{\frac{1+\rho}{2}}
}J_{\rho+1}\left(4\pi\sqrt{\frac{nx(1-u)}{Nu}}\right)du \\
&  \ll
x^{\frac{\rho +1}{2}-\frac14}\left (\sum_{n=1}^{\infty} \frac{1}{n^{\frac{\rho-1}{2}+k-\mu^-_g +\frac14}} \right ) \int_{0}^{\frac12}\frac{u^{k+\frac{\rho}{2}-\frac54}}{(1-u)^{\frac{\rho}{2}+\frac34}
}du,
\end{align}
which are derived from the bound $J_{\nu}(x) \ll_{\nu} x^{-1/2}$ implied by Proposition \ref{thm:besselj-asymptotics}. 

Let then $\rho>\rho_0-\frac12$ and $I$ an interval contained in some closed interval in $\mathbb R_{>0}$. 
For convenience we set $y=\sqrt{x}$ and rewrite the right-hand side of Theorem \ref{thm:summationformula} as a single series as follows:
\begin{equation}\label{eq:ArhoDef}
A_{\rho}(y) =: \sum_{n = 1}^{\infty} a_{\rho}(n,y),
\end{equation}
where
\begin{align*}\label{eq:Arhofunction}
a_{\rho}(n,y) &= -i^k y^{\rho + k}\left(\frac{\sqrt{N}}{2\pi }\right)^{\rho}\frac{b^{+}(n)}{n^{(\rho+k)/2}}J_{\rho+k}\left(4\pi y \sqrt{\frac{n}{N}}\right) \\
&- i^k y^{\rho+1}\left(\frac{\sqrt{N}}{2\pi}\right)^{\rho+k-1} \frac{b^-(n)}{n^{(\rho-1)/2+k}}\int_{0}^{1/2}\frac{u^{k+(\rho-3)/2}}{(1-u)^{(1+\rho)/2}
}J_{\rho+1}\left(4\pi y\sqrt{\frac{n(1-u)}{Nu}}\right)du.
\end{align*}
The absolute convergence of $A_{\rho}(y)$ for $\rho>\rho_0-\frac12$, established above with \eqref{series}, implies that $A_{\rho}(y)$ converges uniformly for $y \in I$. 

On the other hand, the identity $(z^{\nu} J_{\nu}(z))'=z^{\nu}J_{\nu-1}(z)$ ((10.6.6) of \cite{NIST:DLMF}) implies that
\begin{equation}
    \label{a'}
a'_{\rho+1}(n,y)=2y a_{\rho}(n,y).
\end{equation}
Therefore, since, as just shown, $2y \sum_{n \ge 1} a_{\rho}(n, y)=\sum_{n \ge 1} a'_{\rho+1}(n, y)$ converges uniformly for $y \in I$, when $\rho>\rho_0-\frac12$, we deduce that $\sum a'_{\rho+1}(n, y)=(\sum a'_{\rho+1}(n, y))$. Therefore,
\begin{equation}
    \label{deriv}
2yA_{\rho}(y)=2y \sum_{n \ge 1} a_{\rho}(n, y)=\sum_{n \ge 1} a'_{\rho+1}(n, y)=A'_{\rho+1}(y).
\end{equation}
Since $\rho+1>\rho_0$, the first part of the proof shows that \begin{equation}
    \label{deriv2}
    t_{\rho+1}(y^2)=A_{\rho+1}(y)\end{equation} where
\begin{equation}
    \label{trho} t_\rho(x):=\frac{1}{\Gamma(\rho+1)}\sum_{n\leq x}a^{+}(n)(x-n)^{\rho} +\frac{x^{\rho}}{2 \pi i } \sum_{n \le x} a^-(n) g_{\rho} \left(n, x \right) - Q_{\rho}(x).
    \end{equation}
We can also see that
\begin{equation}\label{diff}t'_{\rho+1}(y^2)=2y t_{\rho}(y^2)
\end{equation}
This is a direct computation, where, for the second term we use the integral expression for the series $\sum a^-(n) g_{\rho+1}(n, y^2)$ given in \eqref{g(n, x)}. Specifically, the second term in \eqref{diff} can be written as
$$y^{2\rho+2}\sum_{n \le y^2} a^-(n) g_{\rho+1}(n,y^2)= \int_{(\alpha)}\frac{W_{1-k}(s)L^{-}(f,s)}{\Gamma(\rho+2+s)}y^{2\rho+2+2s} \ ds$$
and differentiating with respect to $y$ gives
\begin{align*}
   2\int_{(\alpha)}\frac{(s + \rho+2) W_{1-k}(s)L^{-}(f,s)}{\Gamma(\rho+1+s)}y^{2s+ 2\rho+1}\ ds &= 2\int_{(\alpha)}\frac{ W_{1-k}(s)L^{-}(f,s)}{\Gamma(\rho+1+s)}y^{2s+ 2\rho+1}\ ds \\
   &=2y^{2\rho+1}\sum_{n \le y^2} a^-(n) g_{\rho}(n,y^2)
\end{align*}
Combining \eqref{deriv}, \eqref{deriv2} \eqref{diff}, we deduce 
$$\frac{1}{\Gamma(\rho+1)}\sum_{n\leq y^2}a^{+}(n)(y^2-n)^{\rho} +\frac{y^{2\rho}}{2 \pi i } \sum_{n \le y^2} a^-(n) g_{\rho} \left(n, y^2 \right) - Q_{\rho}(y^2)=A_{\rho}(y),$$
which (with $x=\sqrt{y}$) is Theorem \ref{thm:summationformula} for $\rho>\rho_0-\frac12.$
\end{proof}

\subsection{Asymptotic formula}
\label{sec:AsymptoticAnalysis}

We can now deduce our main asymptotic result from the summation formula proved above.
\begin{theorem}\label{th:SummationAsymptotic}
Assume all the notation in Theorem \ref{thm:summationformula}. For $\rho>\rho_0-\frac{1}{2}$,
we have
\begin{align}\label{eq:SummationAsymptotic}
\frac{1}{\Gamma(\rho+1)}\sum_{n\leq x}a^{+}(n)(x-n)^{\rho} &+\frac{x^{\rho}}{2 \pi i } \sum_{n \le x} a^-(n) g_{\rho} \left(n, x \right) \nonumber \\
&= Q_{\rho}(x) + O(x^{\frac{\rho}{2} + \max \{ \frac{1}{4}, \frac{2k-1}{4} \}} ) \nonumber \\
&= \begin{cases}
x^{\rho + k} \frac{b^{+}(0) i^k  (2 \pi)^k}{N^{\frac{k}{2}} \Gamma(\rho + k + 1)} + O(x^{\rho+1}) & 2 \ge k >1, \\
\frac{2\pi i}{\Gamma(\rho+2)} x^{\rho+1} \left(b^-(0)+ \frac{b^+(0)}{N} \right) + O(x^{\rho}) & k=1, \\
x^{\rho+1} \frac{2 \pi  N^{\frac{k}{2}-1} b^-(0) i^k}{\Gamma(\rho + 2)} + O(x^{\max \{ \rho, \rho+k
\}})  & k < 1.
\end{cases}
\end{align}
\end{theorem}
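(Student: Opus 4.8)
The plan is to read off \eqref{eq:SummationAsymptotic} directly from the summation formula \eqref{eq:sumformula}, whose left-hand side is precisely $Q_\rho(x)$ added to the quantity on the left of \eqref{eq:SummationAsymptotic}. Thus it suffices to (i) bound the two Bessel series making up the right-hand side of \eqref{eq:sumformula}, and then (ii) isolate the dominant monomial of $Q_\rho(x)$ in each weight regime and verify that the remaining monomials, together with the error from (i), fit the claimed bound.

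For step (i) I would reuse the estimates already recorded in \eqref{series} during the proof of Theorem~\ref{thm:summationformula}. Applying $J_\nu(z)\ll_\nu z^{-1/2}$, a consequence of Proposition~\ref{thm:besselj-asymptotics}, to the $J_{\rho+k}$-series shows that it is $O(x^{(\rho+k)/2-1/4})=O(x^{\rho/2+(2k-1)/4})$, the leftover Dirichlet sum $\sum_{n\ge1}n^{-((\rho+k)/2-\mu_g^{+}+1/4)}$ converging precisely because $\rho>\rho_0-\tfrac{1}{2}$; the same bound applied to the $J_{\rho+1}$-term gives $O(x^{(\rho+1)/2-1/4})=O(x^{\rho/2+1/4})$, with the $u$-integral $\int_0^{1/2}u^{k+\rho/2-5/4}(1-u)^{-\rho/2-3/4}\,du$ finite for the same reason. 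Summing the two contributions yields the first line of \eqref{eq:SummationAsymptotic}, namely an error of order $x^{\rho/2+\max\{1/4,(2k-1)/4\}}$.

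For step (ii), the four terms of $Q_\rho(x)$ in \eqref{Qrho} carry exponents $\rho$ (the $a^+(0)$-term), $\rho+1$ (the $b^-(0)$-term), $\rho+k-1$ (the $a^-(0)$-term) and $\rho+k$ (the $b^+(0)$-term). If $1<k\le2$, the $b^+(0)$-term of exponent $\rho+k$ strictly dominates the other three (each with exponent $\le\rho+1$) and also dominates the error exponent $\rho/2+(2k-1)/4$ automatically, so $Q_\rho(x)$ equals that term plus $O(x^{\rho+1})$. If $k=1$, the $a^+(0)$- and $a^-(0)$-terms are $O(x^\rho)$ while the two exponent-$(\rho+1)$ terms merge; using $i^k=i$ and $(2\pi)^k=2\pi$, their sum reduces to the $x^{\rho+1}$-term in the statement, everything else being $O(x^\rho)$, and the Bessel error $O(x^{\rho/2+1/4})$ is $o(x^\rho)$ because $\rho_0\ge1$ forces $\rho>\tfrac{1}{2}$. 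If $k<1$, the $b^-(0)$-term of exponent $\rho+1$ dominates (since $k<1$ gives $\rho+k<\rho+1$ and $\rho+k-1<\rho$), the remaining terms being $O(x^{\max\{\rho,\rho+k\}})$; here the inequality $\rho_0\ge3-2k$ guarantees $\rho$ is large enough that $x^{\rho/2+1/4}$ is absorbed as well.

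I do not expect a genuine obstacle: all the analytic substance already lives in Theorem~\ref{thm:summationformula} and Proposition~\ref{thm:besselj-asymptotics}, and what remains is a bookkeeping comparison of exponents. The only points needing care are checking that the $n$-sums and the $u$-integral in the error bounds still converge at the threshold $\rho=\rho_0-\tfrac{1}{2}$, which is exactly what the definition of $\rho_0$ is calibrated to ensure, and treating the transitional weight $k=1$, where two pairs of terms of $Q_\rho(x)$ coincide in order.
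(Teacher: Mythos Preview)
Your proposal is correct and follows essentially the same route as the paper's proof: invoke Theorem~\ref{thm:summationformula}, estimate the two Bessel series on the right of \eqref{eq:sumformula} via the bounds \eqref{series} (which yield $O(x^{(\rho+k)/2-1/4})+O(x^{(\rho+1)/2-1/4})$), and then compare the exponents of the four monomials in $Q_\rho(x)$ case by case. One minor remark: the condition is $\rho>\rho_0-\tfrac12$ strictly, so your worry about convergence ``at the threshold'' does not arise.
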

\begin{proof} From Theorem \ref{thm:summationformula}, we see that \eqref{eq:sumformula} holds for $\rho>\rho_0-\frac12.$ Then, \eqref{series} allows us to estimate the series in the right-hand side of \eqref{eq:sumformula}, for $\rho>\rho_0-\frac12$ to deduce
\begin{equation}
\frac{1}{\Gamma(\rho+1)}\sum_{n\leq x}a^{+}(n)(x-n)^{\rho} +\frac{x^{\rho}}{2 \pi i } \sum_{n \le x} a^-(n) g_{\rho} \left(n, x \right)= Q_{\rho}(x)+O(x^{\frac{\rho +1}{2}-\frac14})+O(x^{\frac{\rho +k}{2}-\frac14})
\end{equation}
Again from our choice of $\rho$, we have $\rho+1>\rho+k>(\rho+1)/2-1/4>(\rho+k)/2-1/4$, if $k<1$ and 
$\rho+k \ge \rho+1 >\rho \ge (\rho+k)/2-1/4 \ge (\rho+1)/2-1/4$, if $k \ge 1$. Therefore, since $Q_{\rho}(x)$ is a linear combination of $x^{\rho},$ $x^{\rho+1}$, $x^{\rho+k-1}$ and $x^{\rho+k}$, we deduce \eqref{eq:SummationAsymptotic}.
\end{proof}

The lemma below gives an estimate for the second sum in the left-hand side of \eqref{eq:SummationAsymptotic}.
\begin{lemma}\label{suppl} Let $f \in H_k^{\text{Eis}} (N, \chi)$ have the expansion \eqref{FEf}. 
Then, for $\alpha > k-1$ such that $L_f^-(\alpha)$ is absolutely convergent and
such that $W_{1-k}(s)$ is analytic on $\operatorname{Re}(s) = \alpha$, for all $\rho > 1-k$ we have
$$
\sum_{n \le x} a^-(n) g_{\rho} \left(n, x \right) = O ( x^{\alpha}) \qquad \text{as $x \to \infty$.}$$
\end{lemma}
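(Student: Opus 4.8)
The plan is to express the sum $\sum_{n\le x}a^-(n)g_\rho(n,x)$ as a Mellin--Barnes integral and bound that integral directly. Recall from the proof of Theorem~\ref{thm:perrongen} that, with $b(n)=a^-(n)$ and $r=x$, one has
\[
\sum_{n\le x}a^-(n)g_\rho(n,x)=2\pi i\int_{(\alpha)}\frac{L^-(f,s)W_{1-k}(s)}{\Gamma(\rho+1+s)}\,x^s\,ds,
\]
valid precisely when $\alpha>k-1$, $\alpha+1-k>0$ (automatic here), $\rho+k>1$ (i.e. $\rho>1-k$), $L^-(f,s)$ is absolutely convergent on $\operatorname{Re}(s)=\alpha$, and $W_{1-k}$ is analytic on that line. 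These are exactly the hypotheses of the lemma, so no contour shifting is needed --- we just estimate on the fixed line $\operatorname{Re}(s)=\alpha$.

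First I would pull out the factor $x^s$: on the line $\operatorname{Re}(s)=\alpha$ we have $|x^s|=x^\alpha$, so
\[
\Bigl|\sum_{n\le x}a^-(n)g_\rho(n,x)\Bigr|\le 2\pi\,x^{\alpha}\int_{-\infty}^{\infty}\left|\frac{L^-(f,\alpha+it)W_{1-k}(\alpha+it)}{\Gamma(\rho+1+\alpha+it)}\right|dt.
\]
The remaining integral is a constant depending only on $f,\alpha,\rho$ (not on $x$), provided it converges. To see convergence: $|L^-(f,\alpha+it)|$ is bounded (indeed $O(1)$) by absolute convergence of the Dirichlet series on $\operatorname{Re}(s)=\alpha$; by Corollary~\ref{thm:Wbound}, $|W_{1-k}(\alpha+it)|\ll_{\alpha,k}|t|^{\alpha+(1-k)-1/2}e^{-\pi|t|/2}$ as $|t|\to\infty$; and by Stirling's formula (Proposition~\ref{thm:stirling}), $|\Gamma(\rho+1+\alpha+it)|\sim|t|^{\rho+1/2+\alpha}e^{-\pi|t|/2}$. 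Hence the integrand decays like $|t|^{(1-k)-1-\rho}$ times a bounded factor, which is integrable at infinity since $\rho>1-k$ forces $(1-k)-1-\rho<-1$; near $t=0$ the integrand is continuous because $W_{1-k}$ is assumed analytic on the line $\operatorname{Re}(s)=\alpha$ and $\Gamma(\rho+1+\alpha+it)$ has no zero. Therefore the integral is finite and we conclude $\sum_{n\le x}a^-(n)g_\rho(n,x)=O(x^\alpha)$.

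The only genuinely delicate point is making sure the Mellin--Barnes representation actually holds under the stated hypotheses --- i.e. reconciling the lemma's conditions with those of Theorem~\ref{thm:perrongen} and checking that the interchange of $\sum_n$ and $\int_{(\alpha)}$ is justified. The interchange is the same one performed in the proof of Theorem~\ref{thm:perrongen}, legitimate because $\int_{(\alpha)}|W_{1-k}(s)/\Gamma(\rho+1+s)|\,|r/n|^{\operatorname{Re}(s)}\,|ds|$ is summable in $n$ thanks to the decay estimates above combined with absolute convergence of $L^-(f,s)$; I would simply cite that step. Everything else is a routine Stirling/$W_\nu$-bound estimate, so I do not anticipate a real obstacle --- the lemma is essentially a corollary of the integral identity already established in Theorem~\ref{thm:perrongen} together with Corollary~\ref{thm:Wbound}.
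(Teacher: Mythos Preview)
Your proposal is correct and follows essentially the same approach as the paper: invoke the integral representation from Theorem~\ref{thm:perrongen}, pull out $|x^s|=x^{\alpha}$, and show the remaining integral is $O(1)$ via Corollary~\ref{thm:Wbound} and Stirling. One cosmetic slip: your integral identity carries an extra factor of $2\pi i$ (the $2\pi i$ is already built into $g_\rho$ in the statement of Theorem~\ref{thm:perrongen}), but this does not affect the bound.
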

\begin{proof} With the assumptions of the lemma, Theorem \ref{thm:perrongen} applies to give
\begin{align*}
\left | \sum_{n \le x} a^-(n) g_{\rho} \left(n, x \right)\right |= \left |\int_{(\alpha)} \frac{L^-(f, s) W_{1-k}(s) x^{s}}{\Gamma(\rho + 1 + s)} ds \right | \le x^{\alpha}\int_{(\alpha)} \frac{ |L^-(s) W_{1-k}(s)|}{|\Gamma (\rho + 1 + s)|} ds 
\end{align*}
Stirling's bound and Corollary \ref{thm:Wbound} show that the integral is $O(1)$.

\end{proof}

\section{Refining Theorem \ref{thm:summationformula}}\label{extendingrange}
In this section we explore various refinements of Theorem \ref{thm:summationformula}.  First we present two alternative versions of Theorem \ref{thm:summationformula} that may be more appealing. Then we give a version in the $k = \frac{3}{2}$ case with an extended range for $\rho$. 

Following the notation of \cite{ChandNaras61}, we assign to the sequences $(b_n)_n$ and $(\mu_n)_n$ the series $F_{\nu}\left(y ; b_n; \mu_n \right)$:
\begin{equation}\label{eq:Fnu}
F_{\nu} \left(y ; b_n; \mu_n \right)  := \sum_{n=1}^{\infty} b_n \left(\frac{y^2}{\mu_n} \right)^{\frac{\nu}{2}}  J_{\nu} \left( 4 \pi \sqrt{\mu_n}  y \right).
\end{equation}
First we show that these functions are central to understanding both infinite sums in Theorem \ref{thm:summationformula}. 

When the sum defining $F_{\rho+1}(y, \frac{b^-(n)}{n^{k-1}}, n)$ converges absolutely, we can express the right-hand side of our summation formula in terms of $F_{\rho +k}$ and $F_{\rho + 1}$, as follows:
\begin{lemma}\label{thm:integral-transform}
When $F_{\rho+1}(y, \frac{b^-(n)}{n^{k-1}}, n)$ converges absolutely and $\rho > \frac{1}{2} - 2k$, we have
\begin{align*}
A_{\rho}(y) =-i^k  \frac{N^{-\frac{k}{2}}}{(2 \pi)^{\rho}}  F_{\rho + k} \left( y; b^+(n); \frac{n}{N} \right) - i^k \frac{N^{\frac{k}{2}-1}}{(2 \pi)^{\rho + k -1}} \int_0^{\frac{1}{2}} u^{k-2} F_{\rho+1} \left( y; \frac{b^{-}(n)}{n^{k-1}} ; \frac{n(1-u)}{Nu} \right) du,
\end{align*}
with $A_{\rho}(y)$ as defined in \eqref{eq:ArhoDef}. 
\end{lemma}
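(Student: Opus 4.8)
The statement is essentially a bookkeeping identity: we must recognize the two series appearing in the definition \eqref{eq:ArhoDef}--\eqref{eq:Arhofunction} of $A_\rho(y)$ as instances of the functions $F_\nu(y; b_n; \mu_n)$ from \eqref{eq:Fnu}. The plan is to treat the two terms of $a_\rho(n,y)$ separately and then resum.

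First I would handle the ``holomorphic'' term. In $F_{\rho+k}\left(y; b^+(n); \tfrac{n}{N}\right)$ we have, by definition \eqref{eq:Fnu}, the summand $b^+(n)\left(\tfrac{y^2 N}{n}\right)^{(\rho+k)/2} J_{\rho+k}\!\left(4\pi\sqrt{\tfrac{n}{N}}\,y\right)$. Comparing with the first line of \eqref{eq:Arhofunction}, whose summand is $-i^k y^{\rho+k}\left(\tfrac{\sqrt N}{2\pi}\right)^\rho \tfrac{b^+(n)}{n^{(\rho+k)/2}} J_{\rho+k}\!\left(4\pi y\sqrt{\tfrac{n}{N}}\right)$, we see the two agree up to the scalar $-i^k N^{-k/2}(2\pi)^{-\rho}$; indeed $y^{\rho+k} n^{-(\rho+k)/2} = N^{-(\rho+k)/2}\bigl(y^2N/n\bigr)^{(\rho+k)/2}$, and the residual $N$-power $N^{-(\rho+k)/2}\cdot N^{\rho/2} = N^{-k/2}$ combines with $(2\pi)^{-\rho}$. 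This gives the first term of the claimed formula.

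Next I would handle the ``non-holomorphic'' term, which carries the $u$-integral. Fix $u \in (0,\tfrac12)$ and look inside the integral: the relevant object is $F_{\rho+1}\!\left(y; \tfrac{b^-(n)}{n^{k-1}}; \tfrac{n(1-u)}{Nu}\right)$, whose summand is $\tfrac{b^-(n)}{n^{k-1}}\left(\tfrac{y^2 Nu}{n(1-u)}\right)^{(\rho+1)/2} J_{\rho+1}\!\left(4\pi\sqrt{\tfrac{n(1-u)}{Nu}}\,y\right)$. Comparing with the second line of \eqref{eq:Arhofunction}, whose $n$-th summand integrand is $-i^k y^{\rho+1}\left(\tfrac{\sqrt N}{2\pi}\right)^{\rho+k-1}\tfrac{b^-(n)}{n^{(\rho-1)/2+k}}\,\tfrac{u^{k+(\rho-3)/2}}{(1-u)^{(1+\rho)/2}}\,J_{\rho+1}\!\left(4\pi y\sqrt{\tfrac{n(1-u)}{Nu}}\right)$, I would match the powers of $n$, $u$, $(1-u)$, $y$ and $N$: the exponent of $n$ is $-(k-1) - (\rho+1)/2 = -(\rho-1)/2 - k$, matching; the exponent of $u$ from $F_{\rho+1}$ is $(\rho+1)/2$, and multiplying by the external $u^{k-2}$ in Lemma~\ref{thm:integral-transform} gives $u^{k-2+(\rho+1)/2} = u^{k+(\rho-3)/2}$, matching; the exponent of $(1-u)$ is $-(\rho+1)/2$, matching; the power of $y$ is $\rho+1$, matching; and the residual $N$-power $N^{(\rho+1)/2}$ from $F_{\rho+1}$ combined with the external factor must produce $\left(\tfrac{\sqrt N}{2\pi}\right)^{\rho+k-1}$, forcing the prefactor $-i^k N^{k/2-1}(2\pi)^{-(\rho+k-1)}$ as stated. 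After interchanging $\sum_n$ and $\int_0^{1/2}$ — justified since the hypothesis ``$F_{\rho+1}(y,\tfrac{b^-(n)}{n^{k-1}},n)$ converges absolutely'' together with $0<u<\tfrac12$ and $\rho > \tfrac12-2k$ gives a convergent majorant (this is exactly the second bound in \eqref{series}, which needs $u^{k+\rho/2-5/4}(1-u)^{-\rho/2-3/4}$ integrable on $(0,\tfrac12)$, i.e. $k+\rho/2-5/4 > -1$, equivalently $\rho > \tfrac12 - 2k$) — we obtain the second term.

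\textbf{Main obstacle.} The only non-formal point is the Fubini/Tonelli justification for swapping the series and the $u$-integral in the second term, and for resumming termwise; everything else is algebraic manipulation of exponents. The convergence hypothesis on $F_{\rho+1}(y,\tfrac{b^-(n)}{n^{k-1}},n)$ is placed in the statement precisely so that, using $|J_{\rho+1}(z)| \ll z^{-1/2}$ from Proposition~\ref{thm:besselj-asymptotics}, the $n$-sum is dominated uniformly for $u$ in compact subsets of $(0,\tfrac12)$ by a constant times $\sum_n |b^-(n)| n^{-(\rho-1)/2-k+1/4}$, while the remaining $u$-dependence integrates against $u^{k+\rho/2-5/4}(1-u)^{-\rho/2-3/4}$, which is integrable on $(0,\tfrac12)$ exactly under $\rho>\tfrac12-2k$ (the behavior near $u=\tfrac12$ is harmless, and near $u=0$ one also needs to observe that $\sqrt{(1-u)/u}\,y \to \infty$ so the Bessel asymptotic applies). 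I would spell this out in one or two sentences and then the identity follows by linearity.
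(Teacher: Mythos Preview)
Your proposal is correct and follows essentially the same approach as the paper's own proof: identify the first sum in $A_\rho(y)$ algebraically as a multiple of $F_{\rho+k}$, and for the second term justify the sum--integral interchange via Fubini using the bound $|J_{\rho+1}(z)|\ll z^{-1/2}$, which produces exactly the integrability condition $k+\rho/2-5/4>-1$, i.e.\ $\rho>\tfrac12-2k$. The only cosmetic difference is that the paper first substitutes $v=\tfrac{1}{u}-1$ to move the singular endpoint from $u=0$ to $v=\infty$ before estimating, whereas you estimate directly in the $u$-variable; both routes lead to the same majorant. (Minor slip: your $n$-exponent in the dominating series should be $-(\rho-1)/2-k-\tfrac14$, not $+\tfrac14$; this only helps convergence.)
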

\begin{remark}\label{SeSt}
In general, $F_{\rho+1}(y, \frac{b^-(n)}{n^{k-1}}, n)$ converges absolutely for $\rho > \rho_0 - \frac{1}{2}$, as used in the proof of Theorem \ref{thm:summationformula}. When $k=\frac{3}{2}$, $F_{\rho+1}(y, \frac{b^-(n)}{n^{k-1}}, n)$ converges absolutely for $\rho > 2 \mu_g^- -2k + \frac{3}{2}$, extending the range of $\rho > \rho_0 - \frac{1}{2}$. This is because the sequences $a^-(n)$ and $b^-(n)$ are supported in a set of the form $\{t_im^2; m \in \mathbb N, i=1, \dots \ell\}$. Indeed, they are Fourier coefficients of $\xi_{3/2}(f)$ and $\xi_{3/2}(g)$, which are modular forms of weight $1/2$ and hence, by Serre-Stark's Basis Theorem \cite{SSt}, they are a linear combination of 
theta series. 
\end{remark}
\begin{proof}
We make the change of variables $v = \frac{1}{u} - 1$ to rewrite the integral on the right as
\begin{align}\label{chvar}
\int_0^{\frac{1}{2}} u^{k-2} F_{\rho+1} \left( y; \frac{b^{-}(n)}{n^{k-1}} ; \frac{n(1-u)}{Nu} \right) du 
= \int_1^{\infty} (v+1)^{-k} F_{\rho+1} \left( y; \frac{b^{-}(n)}{n^{k-1}} ; \frac{nv}{N} \right) dv.
\end{align}
Let $|F_{\rho+1}|$ denote the series of the absolute values of the terms of $F_{\rho+1}$. 
By Proposition \ref{thm:besselj-asymptotics}, $|J_{\rho+1}(4 \pi y \sqrt{nv/N})| \ll v^{-1/4} |J_{\rho+1}(4 \pi y \sqrt{n})|$ as $n \to \infty$, if $v \ge 1$ and hence, the integrand in \eqref{chvar} is bounded by
$$
|(v+1)^{-k} F_{\rho+1} \left( y; \frac{b^{-}(n)}{n^{k-1}} ; \frac{nv}{N} \right) | \ll |F_{\rho+1}| \left( y; \frac{b^{-}(n)}{n^{k-1}} ; n \right)  (v+1)^{-k } v^{-(\frac{\rho+1}{2}) - \frac{1}{4}}
$$
for $v \in (1,\infty)$. If $-k - \frac{(\rho+1)}{2} - \frac{1}{4} < -1$, the integral converges absolutely and by Fubini's Theorem, we can swap the sum and the integral, producing the second sum in the formula for $A_{\rho}(y)$.
\end{proof}

The previous lemma lets us rewrite the summation formula in Theorem \ref{thm:summationformula} more succinctly:
\begin{corollary}\label{thm:refined-summation}
Assume the notation and assumptions of Theorem \ref{thm:summationformula}. For $\rho > \rho_0 - \frac{1}{2}$, we have
\begin{align} \label{eq:refined-summation}
&\frac{1}{\Gamma(\rho+1)}\sum_{n\leq y^2}a^{+}(n)(y^2-n)^{\rho} +\frac{x^{\rho}}{2 \pi i } \sum_{n \le y^2} a^-(n) g_{\rho}\left(n, y^2 \right) - Q_{\rho}(y^2) \nonumber\\
&=-i^k  \frac{N^{-\frac{k}{2}}}{(2 \pi)^{\rho}}  F_{\rho + k} \left( y; b^+(n); \frac{n}{N} \right) - i^k \frac{N^{\frac{k}{2}-1}}{(2 \pi)^{\rho + k -1}} \int_0^{\frac{1}{2}} u^{k-2} F_{\rho+1} \left( y; \frac{b^{-}(n)}{n^{k-1}} ; \frac{n(1-u)}{Nu} \right) du.
\end{align}
\end{corollary}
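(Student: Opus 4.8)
The plan is to derive Corollary~\ref{thm:refined-summation} as a direct consequence of Theorem~\ref{thm:summationformula} combined with Lemma~\ref{thm:integral-transform}. First I would recall that in the proof of Theorem~\ref{thm:summationformula} the right-hand side of \eqref{eq:sumformula} was packaged, under the substitution $y=\sqrt{x}$, into the single series $A_{\rho}(y)$ defined in \eqref{eq:ArhoDef}, whose absolute convergence for $\rho>\rho_0-\tfrac12$ was established via the bounds \eqref{series}. Thus, for $\rho>\rho_0-\tfrac12$, the left-hand side of \eqref{eq:refined-summation} — which is precisely $t_{\rho}(y^2)$ in the notation \eqref{trho} — equals $A_{\rho}(y)$.

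Next I would invoke Lemma~\ref{thm:integral-transform}, which rewrites $A_{\rho}(y)$ in terms of the Bessel-type series $F_{\rho+k}$ and $F_{\rho+1}$, exactly as in the right-hand side of \eqref{eq:refined-summation}. The one point requiring care is the hypothesis of Lemma~\ref{thm:integral-transform}: it requires that $F_{\rho+1}(y;\tfrac{b^-(n)}{n^{k-1}};n)$ converge absolutely and that $\rho>\tfrac12-2k$. As noted in Remark~\ref{SeSt}, the absolute convergence of this series holds for $\rho>\rho_0-\tfrac12$ (the same range coming from \eqref{series}), so both hypotheses are met whenever $\rho>\rho_0-\tfrac12$ — one checks $\rho_0-\tfrac12 \geq \tfrac12-2k$ directly from the definition of $\rho_0$, since in all cases $\rho_0 \geq k$ when $k\geq 1$ and $\rho_0\geq 3-2k$ when $k<1$, and both exceed $1-2k$. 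Assembling these two identities yields \eqref{eq:refined-summation} for all $\rho>\rho_0-\tfrac12$.

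There is essentially no serious obstacle here: the corollary is a bookkeeping consequence of two results already in hand. The only thing to be vigilant about is that the range of $\rho$ claimed in the corollary ($\rho>\rho_0-\tfrac12$) is consistent with the hypotheses of \emph{both} Theorem~\ref{thm:summationformula} (which holds on exactly this range) and Lemma~\ref{thm:integral-transform}, so the write-up should simply note that the convergence condition in the lemma is guaranteed on this range by \eqref{series}, and that $\rho_0-\tfrac12>\tfrac12-2k$ holds unconditionally. One then substitutes, matching the constants $-i^k N^{-k/2}(2\pi)^{-\rho}$ and $-i^k N^{k/2-1}(2\pi)^{-(\rho+k-1)}$ term by term against the two sums in \eqref{eq:sumformula}, and the identity follows.
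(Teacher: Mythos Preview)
Your proposal is correct and follows exactly the paper's approach: the corollary is presented in the paper as an immediate rewriting of Theorem~\ref{thm:summationformula} via Lemma~\ref{thm:integral-transform} (the paper gives no separate proof beyond the sentence ``The previous lemma lets us rewrite the summation formula in Theorem~\ref{thm:summationformula} more succinctly''). Your additional care in verifying that the hypotheses of Lemma~\ref{thm:integral-transform} are met on the range $\rho>\rho_0-\tfrac12$, via Remark~\ref{SeSt} and the inequality $\rho_0-\tfrac12>\tfrac12-2k$, is a welcome and correct elaboration of what the paper leaves implicit.
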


We can realize the integrand on the second line in terms of the ``shadow" of $g=f|w_N$. To be more specific, let $g_1 := - (4\pi)^{k-1} \xi_k(g)$. Then Theorem 5.9 of \cite{thebook} can be used to compute 
$$
g_1 = (k-1) (4 \pi)^{k-1} \overline{b^-(0)} + \sum_{n=1}^{\infty} \frac{\overline{b^{-}(n)}}{n^{k-1}} q^n \in M_{2-k}(N, \chi).
$$ 
Let $f_1 = g_1 | w_N $. Then \eqref{eq:intertwining-property} shows
\begin{align*}
f_1 &= -(4\pi)^{k-1} \xi_k(g)|w_N \\
&= - (Ni)^{-1} (4\pi)^{k-1} \xi_k(g|w_N) \\
&= N^{-1} i (4 \pi)^{k-1} \xi_k( i^{-2k} f) \\
&= N^{-1} i^{2k + 1} (4 \pi)^{k-1} \xi_k(  f). 
\end{align*}
Again using Theorem 5.9 of \cite{thebook},
\begin{align*}
f_1 = (1-k) N^{-1} i^{2k+1} (4 \pi)^{k-1} \overline{a^-(0)}- N^{-1} i^{2k+1} \sum_{n=1}^{\infty} \frac{\overline{a^{-}(n)}}{n^{k-1}} q^n \in M_{2-k}(N, \overline{\chi}).
\end{align*}
\begin{lemma}\label{thm:shadow}
Let $\beta$ be such that $\sum_n |b^-(n)|n^{1-k-\beta}<\infty.$ For $\rho > 2 \beta - 
\frac{3}{2},$ $F_{\rho+1}\left (y,\frac{\overline{b^-(n)}}{n^{k-1}},\frac{n v}{\sqrt{N}} \right )$ is piecewise continuous as a function in $v \in (0,\infty)$, and 
\begin{equation}\label{eq:shadow-relation}
\frac{i^{k-2} }{(2 \pi)^{\rho+k-1} } F_{\rho+1} \left (y, \frac{\overline{b^-(n)}}{n^{k-1}}, 
\frac{nv}{\sqrt N} \right) =\frac{1}{\Gamma (\rho +k)} \sum_{n \le v \sqrt{N}y^2} \frac{\overline{a^-(n)}}{n^{k-1}v^{2-k}}  \left(y^2 - \frac{n}{v\sqrt{N}}\right)^{\rho + k -1}-Q_{\rho + k -1}(y^2,v)
\end{equation}
where 
$$
Q_{\rho+k-1}(x,v) = (k-1)(4\pi)^{k-1} x^{\rho+ k -1} v^{k-2}\left(\frac{i^{2k+1} N^{-1} \overline{a^-(0)}}{\Gamma(\rho + k)} + 
\frac{(2 \pi xv i)^{2-k}\overline{b^{-}(0)}}{\Gamma(\rho+2)} \right)
$$
Moreover, $F_{\rho + 1}(y,b_n, n v)=O(v^{\max \{ \mu_f^-, k-1 \}})$, as $v \to \infty$. 

If, in addition to $\sum_n |b^-(n)|n^{1-k-\beta}<\infty$, we have 
$\sup_{0 \le h \le 1} \left |\sum_{m^2 < n < (m+h)^2} b^-(n) n^{\frac{3}{2}-\beta-k} \right |= o(1),$ then, \eqref{eq:shadow-relation} holds for $\rho > 2 \beta - \frac{5}{2}$.
\end{lemma}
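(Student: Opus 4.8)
\textbf{Proof proposal for Lemma \ref{thm:shadow}.}

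The plan is to recognize \eqref{eq:shadow-relation} as an instance of Theorem \ref{thm:perrongen} (or rather the Perron-type formula behind the summation formula) applied to the modular form $f_1 = g_1|w_N$, whose Fourier coefficients are essentially $\overline{a^-(n)}/n^{k-1}$. First I would record the closed form for the Mellin transform: by the same inverse-Mellin computation used in Theorem \ref{thm:perrongen}, for $\gamma > 0$ and $\rho+k>0$ one has $\frac{1}{2\pi i}\int_{(\gamma)} \frac{\Gamma(s)}{\Gamma(\rho+k+s)} z^{-s}\,ds = z^{-(\rho+k)/2} J_{\rho+k}(2\sqrt z)$ when $z$ is in the appropriate range; multiplying by $x^{s+\rho+k-1}$ and summing against the coefficients of $f_1$ then produces exactly $F_{\rho+1}(y, \overline{b^-(n)}/n^{k-1}, nv/\sqrt N)$ on one side and a partial sum of $\overline{a^-(n)} n^{1-k} v^{k-2}(y^2 - n/(v\sqrt N))^{\rho+k-1}$ plus the relevant residual polynomial on the other. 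The constant term of $f_1$ contributes the first term of $Q_{\rho+k-1}(x,v)$ and the constant term of $g_1$ contributes the second, which accounts for the shape of the stated $Q_{\rho+k-1}(x,v)$; here one uses that the nonzero-coefficient residues come from the poles of $\Gamma(s)$ and $\Gamma(s+\cdots)$ at $s=0$ as in Theorem \ref{thm:ShankadharSingh}.

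To make this rigorous I would verify the hypotheses of Theorem \ref{thm:perrongen} for $f_1$: we need absolute convergence of $\sum_n |b^-(n)| n^{1-k-\beta}$ on the relevant vertical line, which is exactly the hypothesis $\sum_n |b^-(n)| n^{1-k-\beta} < \infty$ with $\alpha = \beta$, and the constraint on $\rho$ coming from $\alpha+1-k>0$ and the exponents in $g_\rho(n,r)$ translates into $\rho > 2\beta - \tfrac{3}{2}$ after accounting for the $n/n^{k-1}$ normalization and the substitution $y^2 = x$. The piecewise continuity in $v$ is immediate from the fact that the partial sum $\sum_{n \le v\sqrt N y^2}$ is a step function in $v$ with the continuous factor $(y^2 - n/(v\sqrt N))^{\rho+k-1}$ vanishing to order $\rho+k-1 > 0$ at the jump points. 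For the growth bound $F_{\rho+1}(y, b_n, nv) = O(v^{\max\{\mu_f^-, k-1\}})$ as $v \to \infty$, I would use the partial-summation/Perron side of \eqref{eq:shadow-relation}: the right-hand side is $O((v y^2)^{?})$ where the exponent is governed by the number of terms $n \le v\sqrt N y^2$, the size $|a^-(n)| = O(n^{\mu_f^-})$, and the size of $(y^2 - n/(v\sqrt N))^{\rho+k-1}$; bounding the sum crudely by an integral gives $v^{\mu_f^-}$ from the coefficient growth, while the constant-term contribution in $Q_{\rho+k-1}$ gives $v^{k-2}$ and $v^{k-2}\cdot v^{2-k} = v^0$ — taking the maximum yields the claimed exponent $\max\{\mu_f^-, k-1\}$ (the $k-1$ coming from the $(k-1)$-weighted constant terms).

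For the final sentence — improving the range to $\rho > 2\beta - \tfrac52$ under the extra hypothesis $\sup_{0\le h\le 1}\left|\sum_{m^2 < n < (m+h)^2} b^-(n) n^{3/2-\beta-k}\right| = o(1)$ — I would use the same differentiation-under-the-sum trick as in the second half of the proof of Theorem \ref{thm:summationformula}: the identity $a'_{\rho+1}(n,y) = 2y\, a_\rho(n,y)$ coming from $(z^\nu J_\nu(z))' = z^\nu J_{\nu-1}(z)$ lets one pass from $\rho+1$ to $\rho$ provided the $F_{\rho+1}$ series (equivalently the $F_{\rho}$ series) still converges, now only conditionally. This is where the Chandrasekharan–Narasimhan trigonometric-series machinery enters: the hypothesis on $\sup_h|\sum_{m^2<n<(m+h)^2}\cdots|$ is precisely what is needed to control the oscillation of $J$-Bessel sums over the theta-supported sequence $n = t_i m^2$ (cf. Remark \ref{SeSt}), giving convergence of $F_{\rho+1}$ for $\rho$ one notch below the absolute-convergence threshold, i.e. $\rho > 2\beta - \tfrac52$. \textbf{The main obstacle} I anticipate is exactly this last step: carefully justifying the term-by-term differentiation and the conditional convergence of the Bessel series at the reduced range, since one cannot appeal to uniform absolute convergence and must instead invoke the square-support of the coefficients together with the $o(1)$ hypothesis to bound tails of $\sum b^-(n) n^{-s} J_{\rho+1}(4\pi\sqrt{nv/N}\,y)$ via Abel summation against the Bessel asymptotics of Proposition \ref{thm:besselj-asymptotics}.
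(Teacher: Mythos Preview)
Your approach is essentially the paper's, but with one mislabeling and one place where you are reinventing a wheel. The paper does exactly what you describe: it observes that the shadow $g_1 = -(4\pi)^{k-1}\xi_k(g)$ is a \emph{holomorphic} modular form of weight $2-k$, writes down the Dirichlet series $\varphi(s) = v^{k-2+s}\sqrt{N}^s L(f_1,s)$ and $\psi(s) = i^{k-2}\sqrt{N}^s v^{-s} L(g_1,s)$ for $f_1 = g_1|w_N$, checks the functional equation $\Gamma(s)(2\pi)^{-s}\varphi(s) = \Gamma(2-k-s)(2\pi)^{s+k-2}\psi(2-k-s)$ via Theorem \ref{thm:ShankadharSingh}, and then applies Lemma~5 of \cite{ChandNaras61} with $a_n = \overline{a^-(n)}n^{1-k}v^{k-2}$, $\lambda_n = n/(v\sqrt N)$, $b_n = i^{k-2}\overline{b^-(n)}n^{1-k}$, $\mu_n = nv/\sqrt N$, $\delta = 2-k$, and with $\rho$ replaced by $\rho+k-1$. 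The point is that because the shadow is holomorphic there is no $W_{1-k}$ term, so Theorem~\ref{thm:perrongen} is not the relevant statement; the correct reference is the classical Chandrasekharan--Narasimhan identity (equivalently the holomorphic specialization of Theorem~\ref{thm:summationformula}, as noted in the Remark following it). Your description of the residual term $Q_{\rho+k-1}(x,v)$ and of the growth bound is fine and matches the paper.

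For the extended range $\rho > 2\beta - \tfrac{5}{2}$, the paper does not rederive anything: it simply invokes Theorem~IV of \cite{ChandNaras61}, for which the additional hypothesis $\sup_{0\le h\le 1}\bigl|\sum_{m^2 < n < (m+h)^2} b^-(n) n^{3/2-\beta-k}\bigr| = o(1)$ is exactly the input needed. Your plan to use the differentiation identity $(z^\nu J_\nu)' = z^\nu J_{\nu-1}$ together with Abel summation against the Bessel asymptotics is essentially the content of the proof of that theorem in \cite{ChandNaras61}, so you are proposing to reprove a black-box result that is already available. That said, you have correctly identified both the role of the $o(1)$ hypothesis and the conditional-convergence issue as the crux; you would just save yourself considerable effort by citing Theorem~IV directly rather than redoing the equiconvergence argument.
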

\begin{proof}
The modular forms $g_1$ and $f_1$ have corresponding Dirichlet series
\begin{equation}\label{Defpsi}
\psi(s) = i^{k-2}\sum_{n=1}^{\infty} \frac{\overline{b^-(n)}\sqrt{N}^s}{n^{k-1+s} v^s} =i^{k-2}\sqrt{N}^s v^{-s} L(g_1,s)=i^{k-2}\frac{(2\pi)^s}{\Gamma(s)}v^{-s}\Lambda(g_1, s)
\end{equation}
and
\begin{equation}\label{Defphi}
\varphi(s) = \sum_{n=1}^{\infty} \frac{\overline{a^{-}(n)} v^{k-2}}{n^{k-1}(n/v)^{s}}\sqrt{N}^s = v^{k-2+s}\sqrt{N}^s L(f_1, s)
=\frac{(2\pi)^s}{\Gamma(s)}v^{k-2+s}\Lambda(f_1, s).
\end{equation}
Since $f_1=g_1|w_N$, Theorem \ref{thm:ShankadharSingh} implies $\Lambda(g_1, s)=i^{2-k}\Lambda(f_1, 2-k-s)$, or $\Lambda(f_1, s)=i^{k-2}\Lambda(g_1, 2-k-s)$ and hence
$$\Gamma(s)(2\pi)^{-s}\varphi(s)=v^{k-2+s}i^{k-2}\Lambda(g_1, 2-k-s)=\Gamma(2-k-s)(2\pi)^{-2+k+s}\psi(2-k-s).$$ Further, $\Lambda(f_1,s)$ has a meromorphic continuation to $\mathbb{C}$ with poles at $0$ and $2-k$ (the other possible poles described in Theorem \ref{thm:ShankadharSingh} do not occur here since $f_1$ is holomorphic) with residues equal to $(k-1)N^{-1}i^{2k+1}(4 \pi)^{k-1} \overline{a^{-}(0)}, (k-1)i^{2-k}(4\pi)^{k-1}\overline{b^{-}(0)}$. Therefore, Lemma 5 of \cite{ChandNaras61} applies with the $\varphi$ and $\psi$ defined in \eqref{Defphi} and \eqref{Defphi}, $a_n = \overline{a^{-}(n)} n^{1-k} v^{k-2}$, $\lambda_n = n/(v \sqrt{N})$, $b_n = \frac{i^{k-2}\overline{b^-(n)}}{n^{k-1}}$, $\mu_n = nv/\sqrt{N}$, $\delta = 2-k$. 
We deduce \eqref{eq:shadow-relation} for $\rho > 2 \beta-\frac32$. Note that we have applied Lemma 5 of \cite{ChandNaras61} with $\rho+k-1$ in place of $\rho.$ Finally, applying Theorem IV of \cite{ChandNaras61}, if the additional condition on $b^-(n)$ holds, we deduce that \eqref{eq:shadow-relation} holds on the extended range $\rho > 2\beta - \frac{5}{2}$.

    As $v$ increases, the sum on the right-hand side of \eqref{eq:shadow-relation} is piecewise continuous with jumps at $y^{-2} N^{-\frac{1}{2}} \mathbb{Z}$, when $\rho+k-1<0$. The terms are of order $O(v^{k-2} \max\{ \overline{a^{-}(n)}n^{1-k} : n \le v \sqrt{N} y^2 \} )$ and the number of terms is $\lfloor v \sqrt{N}y^2 \rfloor$. We conclude that the sum is of order $O(v^{\max \{ \mu_f^-, k-1 \}})$. The order of growth of $Q_{\rho+k-1}(x)$ as $v$ increases is $O(v^{\max\{ k-2, 0\}})$.
\end{proof}

Once we conjugate \eqref{eq:shadow-relation} and replace $v$ with $v/\sqrt{N}$, we obtain another version of Theorem \ref{thm:summationformula} in which the integrand in Corollary \ref{thm:refined-summation} is made elementary.
\begin{corollary}\label{thm:refined-summation-2}
Assume the notation and assumptions of Theorem \ref{thm:summationformula}. For $\rho > \rho_0 - \frac{1}{2}$, we have
\begin{align} \label{eq:refined-summation-2}
&\frac{1}{\Gamma(\rho+1)}\sum_{n\leq x}a^{+}(n)(x-n)^{\rho} +\frac{x^{\rho}}{2 \pi i } \sum_{n \le x} a^-(n) g_{\rho}\left(n, x \right) - Q_{\rho}(x) \nonumber\\
&=-i^k  \frac{N^{-\frac{k}{2}}}{(2 \pi)^{\rho}}  F_{\rho + k} \left( y; b^+(n); \frac{n}{N} \right) - i^{2k}  N^{\frac{k-2}{2}} \int_1^{\infty} (v+1)^{-k}  \overline{Q_{\rho + k -1}\left (y^2,\frac{v}{\sqrt{N}} \right )} dv  \nonumber \\
&+\frac{i^{2k} }{\Gamma(\rho+k)} \int_1^{\infty} (v+1)^{-k}  \sum_{n \le v y^2} \frac{a^{-}(n)}{n^{k-1}} v^{k-2} \left(y^2 - \frac{n}{v}\right)^{\rho + k -1} dv.
\end{align}

\end{corollary}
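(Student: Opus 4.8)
The plan is to derive Corollary~\ref{thm:refined-summation-2} directly from Corollary~\ref{thm:refined-summation} by substituting into the latter the elementary evaluation of $F_{\rho+1}$ furnished by the shadow relation \eqref{eq:shadow-relation} of Lemma~\ref{thm:shadow}. The two formulas already agree in their first right-hand term $-i^k N^{-k/2}(2\pi)^{-\rho}F_{\rho+k}(y;b^+(n);n/N)$ and share the same left-hand side (with $x=y^2$), so the entire content is to show that the single integral
\begin{equation*}
-i^k\frac{N^{\frac{k}{2}-1}}{(2\pi)^{\rho+k-1}}\int_0^{1/2}u^{k-2}F_{\rho+1}\!\left(y;\tfrac{b^-(n)}{n^{k-1}};\tfrac{n(1-u)}{Nu}\right)du
\end{equation*}
appearing in \eqref{eq:refined-summation} coincides with the last two integral terms of \eqref{eq:refined-summation-2}.

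First I would rewrite this integral, using the substitution $v=\tfrac1u-1$ already recorded in \eqref{chvar}, as $\int_1^\infty(v+1)^{-k}F_{\rho+1}(y;\tfrac{b^-(n)}{n^{k-1}};\tfrac{nv}{N})\,dv$. The crucial move is then to conjugate the shadow relation \eqref{eq:shadow-relation}: since $y,v>0$ and $\rho$ is real, while $J_\nu$ of real order and positive argument and the factors $(2\pi)^{\rho+k-1}$, $v^{2-k}$, $(y^2-\tfrac{n}{v\sqrt N})^{\rho+k-1}$ are all real, conjugation sends $i^{k-2}\mapsto i^{2-k}$, $\overline{b^-(n)}\mapsto b^-(n)$, $\overline{a^-(n)}\mapsto a^-(n)$, and $Q_{\rho+k-1}\mapsto\overline{Q_{\rho+k-1}}$; in particular it turns $F_{\rho+1}(y;\tfrac{\overline{b^-(n)}}{n^{k-1}};\cdot)$ into precisely the $F_{\rho+1}(y;\tfrac{b^-(n)}{n^{k-1}};\cdot)$ that appears in \eqref{eq:refined-summation}. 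Next I would substitute $v\mapsto v/\sqrt N$ in the conjugated identity: this converts the frequency $\tfrac{nv}{\sqrt N}$ into $\tfrac{nv}{N}$, the summation range $n\le v\sqrt N y^2$ into $n\le vy^2$, the quantity $y^2-\tfrac{n}{v\sqrt N}$ into $y^2-\tfrac nv$, the factor $v^{2-k}$ into $N^{\frac{k-2}{2}}v^{2-k}$, and $Q_{\rho+k-1}(y^2,v)$ into $Q_{\rho+k-1}(y^2,v/\sqrt N)$. Solving for $F_{\rho+1}(y;\tfrac{b^-(n)}{n^{k-1}};\tfrac{nv}{N})$ then writes it as $(2\pi)^{\rho+k-1}i^{k-2}$ times
\begin{equation*}
\frac{N^{\frac{2-k}{2}}}{\Gamma(\rho+k)}\sum_{n\le vy^2}\frac{a^-(n)}{n^{k-1}}v^{k-2}\!\left(y^2-\tfrac nv\right)^{\rho+k-1}-\overline{Q_{\rho+k-1}\!\left(y^2,\tfrac{v}{\sqrt N}\right)}.
\end{equation*}

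Plugging this into $-i^k\tfrac{N^{k/2-1}}{(2\pi)^{\rho+k-1}}\int_1^\infty(v+1)^{-k}(\cdots)\,dv$ and collecting constants is then pure bookkeeping: $(2\pi)^{\rho+k-1}$ cancels, $i^k\cdot i^{k-2}=i^{2k-2}=-i^{2k}$, $N^{\frac{k}{2}-1}\cdot N^{\frac{2-k}{2}}=1$, and $N^{\frac{k}{2}-1}=N^{\frac{k-2}{2}}$, which reproduces exactly the two integral terms of \eqref{eq:refined-summation-2}. The main obstacle is really just being careful on two fronts. First, the ranges of validity must be kept aligned: Lemma~\ref{thm:shadow} requires a $\beta$ with $\sum_n|b^-(n)|n^{1-k-\beta}<\infty$ and $\rho>2\beta-\tfrac32$, so one picks $\beta$ in the interval $(\mu_g^-+2-k,\ \tfrac{2\rho+3}{4})$, which is nonempty exactly when $\rho>2\mu_g^-+\tfrac52-2k$; comparing with $\rho_0$ in the two cases $k\ge1$ and $k<1$ shows $\rho_0-\tfrac12\ge2\mu_g^-+\tfrac52-2k$, so this follows from the standing hypothesis $\rho>\rho_0-\tfrac12$. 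Second, moving the elementary expression inside $\int_1^\infty(v+1)^{-k}\,dv$ term by term is legitimate because \eqref{eq:shadow-relation} is a pointwise identity in $v$ and the integral converges absolutely: the Bessel asymptotics of Proposition~\ref{thm:besselj-asymptotics} give the integrand the decay $v^{-k-(\rho+1)/2-1/4}$ already exploited in the proof of Lemma~\ref{thm:integral-transform}, which is integrable at infinity since $\rho>\rho_0-\tfrac12>\tfrac32-2k$. When $k>1$ --- in particular for the weight $\tfrac32$ case underlying the applications --- one moreover checks that $\overline{Q_{\rho+k-1}(y^2,v/\sqrt N)}$ is a linear combination of $v^{k-2}$ and $v^0$, so each of the two integrals in \eqref{eq:refined-summation-2} converges on its own; for $k\le1$ they are to be understood as the single convergent integral obtained before the split.
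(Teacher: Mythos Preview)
Your proof is correct and follows exactly the approach indicated by the paper, whose entire proof reads: ``Once we conjugate \eqref{eq:shadow-relation} and replace $v$ with $v/\sqrt{N}$, we obtain another version of Theorem \ref{thm:summationformula} in which the integrand in Corollary \ref{thm:refined-summation} is made elementary.'' You have faithfully expanded this sketch, carrying out the conjugation, the substitution $v\mapsto v/\sqrt N$, and the bookkeeping with the powers of $i$ and $N$, and you have additionally supplied the range-of-validity check for Lemma~\ref{thm:shadow} and the integrability justification that the paper leaves implicit.
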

When $\mu_g^- - k < -1$, we can rewrite the last term as
\begin{align*}
\frac{i^{2k}}{\Gamma(\rho+k)}\sum_{n=1}^{\infty} a^{-}(n) \int_{\frac{1}{y^2 }}^{\infty} (nv+1)^{-k} v^{k-2} \left(y^2 - \frac{1}{v}\right)^{\rho + k -1} dv.
\end{align*}

\subsection{Extending the range for $\rho$}
Analogously to Theorems III and IV of \cite{ChandNaras61}, we prove that the range for $\rho$ in Theorem \ref{thm:summationformula} can be extended to $\rho > \rho_0 - \frac{3}{2}$, when $k = \frac{3}{2}$. In this range, the $F_{\rho+1}$ sum converges absolutely, while the $F_{\rho + k}$ does not converge absolutely but converges conditionally by the work of \cite{ChandNaras61}.

Throughout this section, $I$ is an open interval of length $1$, and $J$ is a closed interval inside $I$. Following \cite{ChandNaras61} we let $S[f]$ denote the Fourier series $\sum_{n \in \mathbb{Z}} s_n e^{ i n x}$ of a continuous function with period $1$ that coincides with $f$ on $J$. Further, we let $\lambda(x)$ be a smooth function with compact support on $I$ such that $\lambda(x)=1$ for $x\in J$. Finally, we let $\equiv$ denote equiconvergence on $I$, i.e. $\sum_{n=1}^{\infty} a_n(x) \equiv \sum_{n=1}^{\infty} b_n(x)$ means that $\sum_{n=1}^{\infty}(a_n(x) - b_n(x))$ converges uniformly on $I$.

A general lemma that we will exploit is the following.
\begin{lemma}\label{thm:uniform-convergence}
    Assume $f(x)$ is continuous and differentiable on $I$. If $f'(x)$ is piecewise continuous on $I$, then $S[\lambda(x) f(x)]$ converges uniformly on $I$. If $f'(x)$ is differentiable on $I$ and $f''(x)$ is piecewise continuous on $I$, then $S'[\lambda(x) f(x)]$ converges uniformly on $I$.
\end{lemma}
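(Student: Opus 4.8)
\textbf{Proof proposal for Lemma \ref{thm:uniform-convergence}.}

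The plan is to reduce both assertions to a single, classical principle: a periodic function's Fourier series converges uniformly once the function is continuous and of bounded variation (indeed, once it is continuous with an integrable, or piecewise continuous, derivative, this follows from integration by parts in the Fourier coefficients combined with a Dirichlet-kernel argument — this is the content of, e.g., the uniform convergence criterion used throughout \cite{ChandNaras61}). So the first step is to verify that $\lambda(x)f(x)$, extended periodically with period $1$, meets these hypotheses. Since $\lambda$ is smooth with compact support inside $I$ and $f$ is continuous on $I$, the product $\lambda(x)f(x)$ is continuous on $\mathbb{R}$ after periodic extension (it vanishes near the endpoints of $I$, so no jump is introduced at the period boundary). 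Likewise $(\lambda f)' = \lambda' f + \lambda f'$ is piecewise continuous on $I$ because $\lambda' f$ is continuous and $\lambda f'$ is a continuous function times a piecewise continuous one; hence $\lambda f$ is continuous and piecewise $C^1$, so its Fourier series $S[\lambda(x)f(x)]$ converges uniformly on $\mathbb{R}$, in particular on $I$. This gives the first statement.

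For the second statement, I would differentiate termwise: writing $S[\lambda(x)f(x)] = \sum_{n \in \mathbb{Z}} s_n e^{inx}$, we have $S'[\lambda(x)f(x)] = \sum_{n \in \mathbb{Z}} in\, s_n e^{inx}$, and the Fourier coefficients of the (piecewise continuous) derivative $(\lambda f)'$ are exactly $in\, s_n$ by integration by parts, which is legitimate since $\lambda f$ is continuous and periodic with no boundary term. Thus $S'[\lambda(x)f(x)]$ is the Fourier series of $(\lambda f)'$. Under the second hypothesis, $f'$ is differentiable with $f''$ piecewise continuous, so $(\lambda f)' = \lambda' f + \lambda f'$ is continuous on $I$ (as $f' $ is continuous), and its derivative $(\lambda f)'' = \lambda'' f + 2\lambda' f' + \lambda f''$ is piecewise continuous on $I$ (each summand is a continuous function times either a continuous or a piecewise continuous factor). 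Again the periodic extension of $(\lambda f)'$ is continuous (it vanishes near the endpoints of $I$). Applying the same classical criterion to $(\lambda f)'$ in place of $\lambda f$, its Fourier series — which is $S'[\lambda(x)f(x)]$ — converges uniformly on $I$.

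The only genuine point requiring care is the interplay at the period boundary: one must be sure that periodizing $\lambda(x)f(x)$ and $(\lambda f)'(x)$ does not manufacture discontinuities, and this is precisely why $\lambda$ is chosen with compact support strictly inside the open interval $I$ of length $1$ — the product and all the derivatives that appear vanish identically in a neighborhood of $\partial I$, so the periodic extensions are as smooth as the interior regularity allows. I expect this bookkeeping, together with citing the correct uniform-convergence theorem for Fourier series of continuous, piecewise-$C^1$ functions, to be the main (and essentially only) obstacle; the rest is the routine product-rule verification of which factors are continuous versus piecewise continuous.
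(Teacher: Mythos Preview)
Your proposal is correct and follows essentially the same approach as the paper: both parts reduce to the classical criterion that a continuous periodic function with piecewise continuous derivative has a uniformly convergent Fourier series, applied first to $\lambda f$ and then to $(\lambda f)'$, with the identification $S'[\lambda f]=S[(\lambda f)']$. If anything, you are more explicit than the paper about why the periodic extension introduces no boundary discontinuity (the compact support of $\lambda$ inside $I$) and about the integration-by-parts justification of $S'[\lambda f]=S[(\lambda f)']$; the paper simply cites a standard reference for the uniform convergence and states the latter identity without further comment.
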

\begin{proof}
Since $f(x)$ and $\lambda(x)$ are both continuous and differentiable, $f(x) \lambda (x)$ is continuous and differentiable on $I$ with continuous derivative, so the uniform convergence of the Fourier series of $f(x) \lambda(x)$ follows from Theorem 13.7 of \cite{Howell}. 

 Similarly, our assumptions ensure $(f\lambda)' (x) = f'(x) \lambda(x) + f(x) \lambda'(x)$ is continuous and differentiable on $I$ with piecewise continuous derivative (it could be discontinuous at the endpoints of $I$). So, by Theorem 13.7 of \cite{Howell} 
 $S'[\lambda(x) f(x)]=S[(\lambda f)'(x)]$ converges uniformly on $I$.
\end{proof}

By Theorem II of \cite{ChandNaras61}, we have that if 
$
\sum \frac{|b_n|}{n^{\frac{\nu}{2} + \frac{3}{4}}} < \infty
$
and
$
\sup_{0 \le h \le 1} \left |\sum_{m^2 < n < (m+h)^2} \frac{b_n}{n^{\frac{\nu}{2} + \frac{1}{4}}} \right |= o(1),
$
then we have, on $J$,
\begin{equation}\label{ThII}
S'[\lambda(y) F_{\nu+1} \left(y ; b_n; \mu_n \right) ] \equiv F_{\nu} \left(y ; b_n; \mu_n \right).
\end{equation} 
Formally, we wish to differentiate \eqref{eq:refined-summation}, but we have to be careful because of the conditional convergence of the $F_{\rho+k}$ sum on this range. For convenience, we set
\begin{align*}
s_{\rho}(y) &:= t_{\rho}(y^2)
 + i^k \frac{N^{\frac{k}{2}-1}}{(2 \pi)^{\rho + k -1}} \int_0^{\frac{1}{2}} u^{k-2} F_{\rho+1} \left( y; \frac{b^{-}(n)}{n^{k-1}} ; \frac{n(1-u)}{Nu} \right) du,
\end{align*}
where $t_{\rho}(x)$ is as defined in \eqref{trho}.
Then \eqref{eq:refined-summation} says that for $\rho > \rho_0 - \frac{1}{2}$, we have
$$
s_{\rho} (y) =-i^k  \frac{N^{-\frac{k}{2}}}{(2 \pi)^{\rho}}  F_{\rho + k} \left( y; b^+(n); \frac{n}{N} \right).
$$
\begin{theorem}\label{thm:extendedsummation}
Assume the same conditions on $f,g$ as in Theorem \ref{thm:summationformula} with $k = \frac{3}{2}$.  Then we have \eqref{eq:refined-summation} for $\rho > \rho_0 - \frac{3}{2}$.
\end{theorem}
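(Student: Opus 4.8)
The plan is to obtain \eqref{eq:refined-summation} at $\rho$ by differentiating it at $\rho+1$, in the spirit of the extension argument in the proof of Theorem~\ref{thm:summationformula}, but now accommodating the fact that for $k=\tfrac32$ the $F_{\rho+k}$-series is only conditionally convergent on the extended range. Since $\rho>\rho_0-\tfrac32$ forces $\rho+1>\rho_0-\tfrac12$, Corollary~\ref{thm:refined-summation} applies at $\rho+1$, and with $s_{\rho+1}$ as defined just before the statement this reads $s_{\rho+1}(y)=-i^{k}N^{-k/2}(2\pi)^{-(\rho+1)}F_{\rho+1+k}(y;b^+(n);\tfrac nN)$ for all $y>0$. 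On this range both ingredients converge absolutely: the series $F_{\rho+1+k}$ because $\rho>\rho_0-\tfrac32\ge 2\mu_g^+-1$ yields $\rho+1+k>2\mu_g^++\tfrac32$, which by the bound $J_\nu(z)\ll_\nu z^{-1/2}$ of Proposition~\ref{thm:besselj-asymptotics} is exactly the threshold for absolute convergence, and the $F_{\rho+2}$-integral contained in $s_{\rho+1}$ by Remark~\ref{SeSt} (with Lemma~\ref{thm:integral-transform}).

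I would then compute $s_{\rho+1}'(y)$ from the expression $s_{\rho+1}(y)=t_{\rho+1}(y^2)+i^{k}N^{(k-2)/2}(2\pi)^{-(\rho+k)}\int_0^{1/2}u^{k-2}F_{\rho+2}(y;\tfrac{b^-(n)}{n^{k-1}};\tfrac{n(1-u)}{Nu})\,du$. The first summand is $C^1$ on $(0,\infty)$, because the moving-endpoint contributions and their first $y$-derivatives vanish as $y^2\downarrow n$ (using $\rho>0$ and $k+\rho>0$), and $\tfrac{d}{dy}t_{\rho+1}(y^2)=2y\,t_\rho(y^2)$ is \eqref{diff}, whose derivation through the integral representation of Theorem~\ref{thm:perrongen} is valid for all $\rho>-k$. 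The second summand is differentiated under the integral sign — the interchange being justified by the local uniform convergence of the differentiated series via $J_\nu(z)\ll_\nu z^{-1/2}$ — using $(z^\nu J_\nu(z))'=z^\nu J_{\nu-1}(z)$; since $(2\pi)^{-(\rho+k)}\cdot 4\pi=2(2\pi)^{-(\rho+k-1)}$, the result is precisely $2y$ times the $F_{\rho+1}$-integral appearing in the definition of $s_\rho$. Hence $s_{\rho+1}'(y)=2y\,s_\rho(y)$ for every $y>0$, on the whole range $\rho>\rho_0-\tfrac32$.

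On the other hand, differentiating $F_{\rho+1+k}$ term by term and applying $(z^\nu J_\nu(z))'=z^\nu J_{\nu-1}(z)$ to the partial sums formally produces $4\pi y\,F_{\rho+k}(y;b^+(n);\tfrac nN)$, a series which here is only conditionally convergent (absolute convergence would require $\rho>2\mu_g^+$, a full unit above $\rho_0-\tfrac32$). The legitimacy of this termwise differentiation is exactly Theorem~II of~\cite{ChandNaras61} in the form~\eqref{ThII}: one checks its hypotheses for $b_n=b^+(n)$, $\mu_n=n/N$, $\nu=\rho+k$, namely $\sum_n|b^+(n)|n^{-(\rho+k)/2-3/4}<\infty$ and $\sup_{0\le h\le1}\bigl|\sum_{m^2<n<(m+h)^2}b^+(n)n^{-(\rho+k)/2-1/4}\bigr|=o(1)$, both of which follow from $b^+(n)=O(n^{\mu_g^+})$ and $\rho+k>2\mu_g^++\tfrac12$ (for the second, $(m^2,(m+h)^2)$ contains $O(m)$ integers, each contributing $O(m^{2\mu_g^+-(\rho+k)+1/2})=o(1)$). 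For $I$, $J$, $\lambda$ as above, \eqref{ThII} then gives $S'[\lambda(y)F_{\rho+1+k}(y;b^+(n);\tfrac nN)]\equiv F_{\rho+k}(y;b^+(n);\tfrac nN)$ on $J$; combining this equiconvergence with the smoothness of the absolutely convergent $F_{\rho+1+k}$ — so that, by Lemma~\ref{thm:uniform-convergence}, $S'[\lambda(y)F_{\rho+1+k}]$ is the uniformly convergent Fourier series of $(\lambda F_{\rho+1+k})'$ — and with the uniform convergence of $F_{\rho+k}$ on $J$ (guaranteed by~\cite{ChandNaras61}), one upgrades the statement to the pointwise identity $\tfrac{d}{dy}F_{\rho+1+k}(y;b^+(n);\tfrac nN)=4\pi y\,F_{\rho+k}(y;b^+(n);\tfrac nN)$ on the interior of $J$, and hence on all of $(0,\infty)$ since $I$, $J$ were arbitrary. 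Thus $s_{\rho+1}'(y)=-i^{k}N^{-k/2}(2\pi)^{-(\rho+1)}\cdot 4\pi y\,F_{\rho+k}(y;b^+(n);\tfrac nN)=-i^{k}N^{-k/2}(2\pi)^{-\rho}\cdot 2y\,F_{\rho+k}(y;b^+(n);\tfrac nN)$.

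Equating the two expressions for $s_{\rho+1}'(y)$ and cancelling $2y$ (legitimate as $y>0$) yields $s_\rho(y)=-i^{k}N^{-k/2}(2\pi)^{-\rho}F_{\rho+k}(y;b^+(n);\tfrac nN)$ for all $y>0$ and $\rho>\rho_0-\tfrac32$, which on unwinding the definitions of $s_\rho$ and $t_\rho$ is precisely \eqref{eq:refined-summation} at $\rho$. I expect the main obstacle to be the step just above: transferring the Fourier-series equiconvergence of~\eqref{ThII} into a genuine pointwise termwise-differentiation identity for the conditionally convergent $F_{\rho+k}$, and verifying that the constant comes out correctly under the normalization of $F_\nu$ in \eqref{eq:Fnu}. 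This is where the $o(1)$ block-sum hypothesis does the real work and where the argument must follow the trigonometric-series analysis of~\cite{ChandNaras61} (Theorems~III and~IV) most closely; the remaining ingredients are the same bookkeeping already carried out in the proof of Theorem~\ref{thm:summationformula}.
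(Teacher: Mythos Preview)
Your overall strategy—differentiating \eqref{eq:refined-summation} at $\rho+1$, establishing $s_{\rho+1}'(y)=2y\,s_\rho(y)$, and invoking \eqref{ThII} to handle the conditionally convergent $F_{\rho+k}$—is the right one and matches the paper's architecture. Claim~1 (the relation $s_{\rho+1}'=2y\,s_\rho$) is essentially fine, and your use of Remark~\ref{SeSt} to justify the $F_{\rho+1}$ interchange is a legitimate shortcut.

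The genuine gap is in the step you yourself flag as the obstacle. You write that $S'[\lambda F_{\rho+1+k}]$ converges uniformly ``by the smoothness of the absolutely convergent $F_{\rho+1+k}$'' via Lemma~\ref{thm:uniform-convergence}. But that lemma's second part requires $F_{\rho+1+k}$ to have a \emph{piecewise continuous second derivative}, and absolute convergence of the series gives you nothing of the sort: termwise differentiation produces $F_{\rho+k}$ (only conditionally convergent) and then $F_{\rho+k-1}$ (not known to converge at all on this range), so you cannot read off $C^2$-regularity from the series itself. Your parenthetical appeal to uniform convergence of $F_{\rho+k}$ ``guaranteed by~\cite{ChandNaras61}'' does not rescue this either: the Chandrasekharan--Narasimhan theorems require a functional equation of the classical $\Gamma$-factor type, whereas the completed $L$-function of the full harmonic Maass form involves the extra $W_{1-k}$ term, so their results do not apply directly to the $b^+(n)$-series.

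The paper breaks this circularity by supplying the missing $C^2$-information from the \emph{other} side of the identity. Since $s_{\rho+1}=c\,F_{\rho+1+k}$ is already known at $\rho+1$, it suffices to show that $s_{\rho+1}$, written via $t_{\rho+1}(y^2)$ plus the $F_{\rho+1}$-integral, has piecewise continuous second derivative. The crucial device is Lemma~\ref{thm:shadow}: it converts the $F_{\rho+1}$-integral (which involves the shadow coefficients $b^-(n)$, for which \cite{ChandNaras61} \emph{does} apply because the shadow is a genuine holomorphic modular form) into an elementary finite sum plus a residual term. One can then check piecewise smoothness of this explicit expression by hand—this is ``Claim~2'' in the paper's proof. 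Only after that does Lemma~\ref{thm:uniform-convergence} legitimately yield uniform convergence of $S'[\lambda F_{\rho+1+k}]$, which combined with \eqref{ThII} gives uniform convergence of $F_{\rho+k}$ and hence the termwise differentiation you want. So the piece you are missing is not a refinement of the trigonometric analysis in \cite{ChandNaras61}, but rather the use of the shadow identity to obtain regularity of $s_{\rho+1}$ independently of the $F_{\rho+k}$-series.
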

\begin{proof}
{\bf Claim 1:} We have $s_{\rho+1}'(y) = 2y s_{\rho}(y)$. Since $\rho+1>0,$ $t_{\rho}(y^2)$ is differentiable in $\mathbb R_+$, as seen in the proof of Theorem \ref{thm:summationformula}, and, with \eqref{diff}, $t'_{\rho+1}(y^2)=2y t_{\rho}(y^2)$. For the remaining term of $s_{\rho}(y)$,
we note that for any $u$, 
$F_{\rho+1}(y):=F_{\rho+1} \left( y; \frac{b^{-}(n)}{n^{k-1}} ; \frac{n(1-u)}{Nu} \right)$ is uniformly convergent and thus continuous. Indeed, we first apply Lemma \ref{thm:shadow} with $k=3/2$, with $\beta=\mu^-_g+\varepsilon$. Because of Remark \ref{SeSt}, $\sum_n |b^-(n)|n^{1-k-\beta}$ will then converge and hence \eqref{eq:shadow-relation} will hold for $\rho>2\mu_g^--\frac{3}{2}.$
This, in turn, implies that Theorem III of \cite{ChandNaras61} is applicable
with $\delta=1/2$. 
Hence
$F_{\rho+1}(y)$ converges uniformly for $\rho+k-1=\rho+\frac{1}{2} \ge 2(\mu^-_g+\varepsilon)-\frac{1}{2}-\frac{3}{2}=2 \mu_g^--2$. This is satisfied when $\rho>\rho_0-\frac{3}{2}=2 \mu_g^--1$.
We further observe that the term-by-term derivative of $F_{\rho+2} \left( y
\right)$  is $2 \pi \cdot 2y F_{\rho+1} \left( y
\right)$. Since we just saw that 
$F_{\rho+1} \left( y
\right)$ converges uniformly, we can interchange summation and differentiation to deduce $(2 \pi)^{-\rho-k}F'_{\rho+2} \left( y
\right)=2y(2 \pi)^{-\rho-k+1}F_{\rho+1} \left( y
\right).$
Differentiating under the integral sign of the last term of $s_{\rho(y)}$ proves the claim. 

{\bf Claim 2:} The function $s_{\rho+1}'(y)$ is piecewise smooth (i.e. has a derivative which is piecewise continuous).
By Claim 1, we just need to check that $s_{\rho}(y)$ is smooth. 
Using Lemma \ref{thm:shadow}, we can rewrite $s_{\rho}(y)$ as
\begin{align*}
&t_{\rho}(y^2)+
(-1)^{k} N^{\frac{k-2}{2}}
\int_1^{\infty} (v+1)^{-k} \overline{Q_{\rho + k -1}(y^2,\frac{v}{\sqrt N})}dv \\
 &-(-1)^k 
 \int_1^{\infty} (v+1)^{-k}  \frac{1}{\Gamma (\rho +k)} \sum_{n \le v y^2} \frac{a^-(n)}{n^{k-1}} v^{k-2} \left(y^2 - \frac{n}{v}\right)^{\rho + k -1} dv.
\end{align*}
The first two terms can be directly seen to have piecewise continuous derivatives.
On the other hand, $\rho+k-1>0,$ and, by Serre-Stark, we see, as in Remark \ref{SeSt}, that $\mu_f^{-} \le 1/2$. Therefore, with Fubini's theorem, we can interchange summation and integration in the last term to rewrite it (up to a constant) as
\begin{equation}
    \label{termbyterm}
\sum_{n=1}^{\infty}\frac{a^{-}(n)}{n^{k-1}}  \int_{n/y^2}^{\infty} (v+1)^{-k} v^{k-2} \left(y^2 - \frac{n}{v}\right)^{\rho + k -1} dv. \end{equation}
We compute the term-by-term derivative of this sum to obtain, for $\rho + k -2 >-1$ 
$$
\sum_{n=1}^{\infty} a^{-}(n) (\rho + k -1) 2y \int_{\frac{1}{y^2}}^{\infty} (vn + 1)^{-k} v^{k-2} (y^2 - \frac{1}{v})^{\rho + k -2} dv. 
$$
This is bounded in absolute value by
$$
\sum_{n=1}^{\infty} |a^{-}(n) n^{-k}| (\rho + k -1) 2y
\int_{\frac{1}{y
^2 }}^{\infty} v^{-2} (y
^2 - \frac{1}{v})^{\rho + k - 2} dv.
$$
Since, as a function of $y$ this is uniformly bounded in any closed interval $J \subset I$, and by Remark \ref{SeSt} on the support of $(a^-(n))$ in classes of squares, we deduce absolute and uniform convergence
since $2k-\mu_f^{-}>1$. Therefore, \eqref{termbyterm} is differentiable with a continuous derivative.

Finally, we use Lemma \ref{thm:uniform-convergence} to conclude the result. 
Specifically, by Lemma \ref{thm:uniform-convergence} and Claim 2, we have that $S'[\lambda (y)s_{\rho+1}(y)]$ is uniformly convergent. 
Since $\rho + 1 > \rho_0- \frac{1}{2}$, Corollary \ref{thm:refined-summation} implies that 
$s_{\rho+1} (y) =-i^k  \frac{N^{-\frac{k}{2}}}{(2 \pi)^{\rho+1}}  F_{\rho + k+1} \left( y; b^+(n); \frac{n}{N} \right).$
Thus,  $S'[\lambda (y) F_{\rho+k + 1}(y, b^+(n), \frac{n}{N})]$ is uniformly convergent. Further, by \eqref{ThII},
$F_{\rho+k}(y, b^+(n), \frac{n}{N}) \equiv S'[\lambda (y) F_{\rho+k + 1}(y, b^+(n), \frac{n}{N})].$
Indeed, for $\rho>\rho_0-\frac{3}{2},$ $\sum \frac{|b^+(n)|}{n^{\frac{\nu}{2} + \frac{3}{4}}} < \infty$ and
$
\sup_{0 \le h \le 1} \left |\sum_{Nm^2 < n < N(m+h)^2} \frac{b^+(n)}{n^{\frac{\rho+k}{2} + \frac{1}{4}}} \right |= o(1).
$

It follows that $ F_{\rho+k}(y, b^+(n), \frac{n}{N})$ is uniformly convergent and continuous. 
This, in particular, implies that $F_{\rho+k + 1}(y, b^+(n), \frac{n}{N})$ is term-by-term differentiable, because the term-by-term derivative is $2 \pi \cdot 2y F_{\rho+k}(y, b^+(n), \frac{n}{N}) $. Thus, $F_{\rho+k + 1}'(y, b^+(n), \frac{n}{N}) = 2 \pi \cdot 2y F_{\rho+k}(y, b^+(n), \frac{n}{N})$ and, in combination with Claim 1, we deduce 
$$s_{\rho} (y) = \frac{s'_{\rho+1} (y)}{2y}=\frac{-i^k N^{-\frac{k}{2}}}{2y(2 \pi)^{\rho+1}}F'_{\rho + k+1} \left( y; b^+(n); \frac{n}{N} \right )=-\frac{i^k N^{-\frac{k}{2}}}{(2 \pi)^{\rho}}F'_{\rho + k} \left( y; b^+(n); \frac{n}{N} \right )$$
as required.
\end{proof}

\section{Applications}\label{application}
\subsection{Hurwitz Class Numbers}\label{Hurwitz}
For any discriminant $d<0$, let $\mathcal{Q}_d$ be the set of binary quadratic forms of discriminant $d$ which are not negative definite. The Hurwitz class numbers count $\SL_2(\mathbb{Z})$-classes of binary quadratic forms inversely weighted by stabilizer size: 
\begin{equation}\label{eq:Hurwitz-defn}
    H(n) := \!\!\!\!\!\!\!\!\!\! \sum_{Q \in \SL_2(\mathbb{Z}) \backslash \mathcal{Q}_{-n}} \frac{2}{|\operatorname{Stab} (Q)|},
\end{equation}
with the convention that $H(0) =  \frac{-1}{12}$ and $H(n)=0$ if $-n$ is neither zero nor a negative discriminant.

We have the famous result of Zagier from 1975, reformulated in the language of this paper. 
\begin{theorem}[Zagier \cite{zagier75}]
The function
\begin{equation}\label{eq:Zagier-EisensteinSeries}
\mathcal{H}(\tau) := - \frac{1}{12} + \sum_{n\geq 1}H(n)q^n + \frac{1}{8\pi \sqrt{v}} + \frac{1}{4\sqrt{\pi}}\sum_{n\geq 1} n \Gamma \left(- \frac{1}{2},4\pi n^2v \right)q^{-n^2}
\end{equation}
belongs to $H_{\frac{3}{2}}^{Eis} (4)$.
\end{theorem}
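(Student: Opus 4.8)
The plan is to verify the three conditions of Definition~\ref{def:hmfpg} for $\mathcal{H}(\tau)$ with $k=\tfrac{3}{2}$, $N=4$ and trivial character. The transformation law, condition~(1), is the real content of Zagier's theorem, and I would not reprove it from scratch: Zagier realizes $\mathcal{H}$ by Hecke's trick as the value at $s=0$ of a non-holomorphic weight $\tfrac{3}{2}$ Eisenstein series on $\Gamma_0(4)$ (equivalently, as the weight $\tfrac{3}{2}$ Cohen--Eisenstein series completed by its non-holomorphic period term), and automorphy in the weight $\tfrac{3}{2}$ Petersson slash on $\Gamma_0(4)$ is built into that construction. One can also see the mechanism directly: invariance under $\tau\mapsto\tau+1$ is immediate from \eqref{eq:Zagier-EisensteinSeries}, while the relation for the remaining generator of $\Gamma_0(4)$ is precisely where the non-holomorphic term $\tfrac{1}{8\pi\sqrt{v}}+\tfrac{1}{4\sqrt{\pi}}\sum_{n\ge1}n\,\Gamma(-\tfrac{1}{2},4\pi n^2 v)q^{-n^2}$ is required: it cancels the failure of the mock modular form $\mathcal{H}^+(\tau)=-\tfrac{1}{12}+\sum_{n\ge1}H(n)q^n$ to transform correctly, and that cancellation is controlled by the modularity of $\vartheta$.

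For condition~(2) I would apply $\xi_{3/2}$ term by term to \eqref{eq:Zagier-EisensteinSeries}. It annihilates the holomorphic part $\mathcal{H}^+$, and on the non-holomorphic part one uses $\tfrac{\partial}{\partial\overline\tau}(v^{-1/2})$ together with $\tfrac{\partial}{\partial\overline\tau}\Gamma(-\tfrac{1}{2},4\pi n^2 v)$, which is an elementary exponential, exactly as in the computation behind Theorem~5.9 of \cite{thebook}. The result is that $\xi_{3/2}(\mathcal{H})$ is a constant multiple of the weight $\tfrac{1}{2}$ theta series $\vartheta(\tau)=\sum_{n\in\mathbb Z}q^{n^2}\in M_{1/2}(4)$; in particular $\xi_{3/2}(\mathcal{H})$ is holomorphic, hence in the kernel of $\xi_{1/2}$, so by the splitting \eqref{eq:Deltasplitting} we get $\Delta_{3/2}\mathcal{H}=-\xi_{1/2}\xi_{3/2}\mathcal{H}=0$.

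For condition~(3), at the cusp $\infty$ the expansion \eqref{eq:Zagier-EisensteinSeries} together with the classical bound $H(n)=O(n^{1/2+\epsilon})$ and the decay $\Gamma(-\tfrac{1}{2},4\pi n^2 v)\ll e^{-4\pi n^2 v}$ shows $\mathcal{H}(\tau)=O(1)$ as $v\to\infty$. For the cusps $0$ and $\tfrac{1}{2}$ of $\Gamma_0(4)$ I would pass to the local expansions there via the Fricke involution $w_4$ of \eqref{eq:fricke} and suitable $\mathrm{SL}_2(\mathbb Z)$ scaling matrices; these expansions, recorded by Zagier, are again a bounded holomorphic $q$-series plus a non-holomorphic tail of the same exponential-decay type, so $\mathcal{H}$ has (bounded, hence polynomial) growth at every cusp. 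Assembling the three conditions yields $\mathcal{H}\in H_{3/2}^{\mathrm{Eis}}(4)$. The one genuine difficulty is condition~(1), which is Zagier's $1975$ theorem itself; in the present framework conditions~(2) and~(3) are short routine checks, so the ``proof'' is essentially a translation of Zagier's result into the language of harmonic Maass forms of polynomial growth.
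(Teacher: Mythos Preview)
The paper does not give its own proof of this statement; it is stated as a citation to Zagier \cite{zagier75} and used as input. Your proposal is a correct outline of how one translates Zagier's 1975 result into the language of Definition~\ref{def:hmfpg}, and you correctly identify that the substantive content is the transformation law (condition~(1)), while harmonicity via $\xi_{3/2}\mathcal{H}$ being a multiple of $\vartheta$ and the polynomial growth at the cusps are routine checks. So there is nothing to compare against in the paper itself; your sketch is the standard way to interpret Zagier's theorem in this framework.
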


We will apply our results to $\mathcal{H}$. To this end, we first determine the action of $w_4$ on $\mathcal H.$ 
We define a version of the non-holomorphic Eisenstein series of weight
$3/2$ for $\Gamma_0(4)$ at the cusp $0.$ For $\Re(s)>1/4$ and $\tau
\in \mathbb H$, we set
$$E_{\frac{3}{2},s}(\tau):=\sum_{\substack{m>0 \\ (m, 2n)=1}} \frac{\left ( \frac{n}{m}\right )\left ( \frac{-1}{m}\right )^{\frac12}}{(mz+n)^{\frac32}|mz+n|^{2s}}.$$
This has an analytic continuation to the entire $s$-plane 
and its value at $s=0$ is a function $E_{\frac{3}{2}, 0}(\tau)$ which is not holomorphic in $\tau$ but satisfies 
the transformation equation of a modular form of weight $3/2$ (\cite{HZ}).
With this notation we have the following. 
\begin{lemma}\label{H} With the definition of the action of $w_4$ given by \eqref{eq:fricke}, we have
$$\mathcal{H}|w_4=\frac{1+i}{\sqrt{8}}\mathcal{H}-\frac{1}{32\sqrt{2}}E_{\frac{3}{2}, 0}.$$
\end{lemma}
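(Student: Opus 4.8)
The plan is to exploit the smallness of the ambient space: both $\mathcal H|w_4$ and the proposed right-hand side lie in the two-dimensional space $H_{\frac32}^{\mathrm{Eis}}(4)$, so the identity reduces to matching a shadow and a single Fourier coefficient. To see the two-dimensionality, note first that $\mathcal H|w_4\in H_{\frac32}^{\mathrm{Eis}}(4)$ by Proposition~\ref{f|} with $N=4$, $k=\tfrac32$ (the twist $\bigl(\tfrac{4}{\bullet}\bigr)$ is trivial on $\Gamma_0(4)$, since the lower-right entry of any element of $\Gamma_0(4)$ is odd), while $E_{\frac32,0}\in H_{\frac32}^{\mathrm{Eis}}(4)$ by \cite{HZ}. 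The splitting $\Delta_{\frac32}=-\xi_{\frac12}\xi_{\frac32}$ from \eqref{eq:Deltasplitting} gives an exact sequence
\[
0\longrightarrow M_{\frac32}(\Gamma_0(4))\longrightarrow H_{\frac32}^{\mathrm{Eis}}(4)\xrightarrow{\ \xi_{\frac32}\ }M_{\frac12}(\Gamma_0(4))\longrightarrow 0,
\]
which is exact on the left because $\ker\xi_{\frac32}$ is the space of holomorphic forms (i.e.\ $M_{\frac32}(\Gamma_0(4))$), and on the right because $\xi_{\frac32}(\mathcal H)\neq0$ (computed below). By the Serre--Stark basis theorem \cite{SSt}, $M_{\frac12}(\Gamma_0(4))=\mathbb C\,\vartheta$ and $M_{\frac32}(\Gamma_0(4))=\mathbb C\,\vartheta^3$, so $\dim H_{\frac32}^{\mathrm{Eis}}(4)=2$; moreover the pair of functionals ``take the shadow $\xi_{\frac32}$'' and ``take the constant Fourier coefficient $a^+(0)$'' is injective on $H_{\frac32}^{\mathrm{Eis}}(4)$, its kernel lying in $\mathbb C\,\vartheta^3$ on which $a^+(0)$ is injective since $\vartheta^3$ has constant term $1$. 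Thus it is enough to show that $D:=\mathcal H|w_4-\tfrac{1+i}{\sqrt8}\,\mathcal H+\tfrac{1}{32\sqrt2}\,E_{\frac32,0}$ has vanishing shadow and vanishing constant term at $\infty$.

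For the shadow, applying $\xi_{\frac32}$ termwise to \eqref{eq:Zagier-EisensteinSeries}---using $\xi_{\frac32}\bigl(\Gamma(-\tfrac12,4\pi n^2v)q^{-n^2}\bigr)=-(4\pi n^2)^{-1/2}q^{n^2}$ and $\xi_{\frac32}(v^{-1/2})=-\tfrac12$---gives $\xi_{\frac32}(\mathcal H)=-\tfrac1{16\pi}\vartheta$, which is nonzero. The intertwining relation \eqref{eq:intertwining-property} then expresses $\xi_{\frac32}(\mathcal H|w_4)$ through $\xi_{\frac32}(\mathcal H)|_{\frac12}w_4$, and the Jacobi transformation $\vartheta(-1/(4\tau))=\sqrt{-2i\tau}\,\vartheta(\tau)$ shows $\vartheta|_{\frac12}w_4=\tfrac{1-i}{\sqrt2}\,\vartheta$, so $\xi_{\frac32}(\mathcal H|w_4)$ is an explicit scalar multiple of $\vartheta$. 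Finally, since $E_{\frac32,0}$ is non-holomorphic, $\xi_{\frac32}(E_{\frac32,0})$ is a nonzero element of $M_{\frac12}(\Gamma_0(4))=\mathbb C\vartheta$, its constant being read off from the Fourier expansion of $E_{\frac32,0}$ recorded in \cite{HZ}. Comparing these three multiples of $\vartheta$ verifies $\xi_{\frac32}(\mathcal H|w_4)=\tfrac{1+i}{\sqrt8}\xi_{\frac32}(\mathcal H)-\tfrac{1}{32\sqrt2}\xi_{\frac32}(E_{\frac32,0})$, i.e.\ $\xi_{\frac32}(D)=0$, so $D\in\mathbb C\,\vartheta^3$.

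It remains to check the constant term. The constant term of $\mathcal H$ is $-\tfrac1{12}$, that of $E_{\frac32,0}$ at $\infty$ is supplied by \cite{HZ}, and that of $\mathcal H|w_4$ at $\infty$ equals---by \eqref{eq:fricke}, letting $\tau\to i\infty$ so that $-1/(4\tau)\to0$---the leading coefficient of $\mathcal H$ at the cusp $0$, which is extracted from the transformation of $\mathcal H$ under $\tau\mapsto-1/(4\tau)$ computed in \cite{zagier75,HZ}. These three constants balance, forcing $D=0$. The main obstacle is precisely this half-integral-weight bookkeeping: pinning down the correct normalization of $E_{\frac32,0}$ (both its shadow and its $\infty$-constant term) and the behavior of $\mathcal H$ at the cusp $0$, while tracking the principal branch of the square root and the $\varepsilon_d$-factors in the slash operator throughout. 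A useful consistency check is that $w_4^2$ acts on weight $\tfrac32$ as multiplication by $i^{-3}=i$ (immediate from \eqref{eq:fricke}) and $(\vartheta^3)|w_4=(\vartheta|_{\frac12}w_4)^3=-\tfrac{1+i}{\sqrt2}\,\vartheta^3$, so applying $w_4$ to the claimed identity must reproduce $i\,\mathcal H$, which constrains the admissible coefficients.
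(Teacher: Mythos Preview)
Your approach is genuinely different from the paper's and conceptually cleaner: the paper computes $\mathcal H|w_4$ directly by substituting the Hirzebruch--Zagier decomposition $\mathcal H=\tfrac{-1}{96}\bigl((1-i)E_{\frac32,0}-iF_{\frac32,0}\bigr)$ (where $F_{\frac32,s}(\tau)=\tau^{-3/2}|\tau|^{-2s}E_{\frac32,s}(-1/4\tau)$) into the definition of $w_4$ and simplifying, whereas you argue structurally via $\dim H_{\frac32}^{\mathrm{Eis}}(4)=2$ and match the shadow and one holomorphic coefficient. Your framework is valid and is a standard technique for identities between mock modular forms.

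That said, the execution has two weaknesses. First, Serre--Stark only computes $M_{\frac12}(\Gamma_0(4))$; the claim $M_{\frac32}(\Gamma_0(4))=\mathbb C\,\vartheta^3$ is true but needs a separate justification (e.g.\ the Cohen--Oesterl\'e dimension formula, or noting that the Kohnen plus-space coincides with the full space here). Second, and more seriously, the three numerical verifications you defer---the shadow of $E_{\frac32,0}$, its constant term, and especially the constant term of $\mathcal H|w_4$---are the entire content of the lemma. Determining $a^+(0)$ for $\mathcal H|w_4$ amounts to knowing the expansion of $\mathcal H$ at the cusp $0$, which in \cite{HZ} is obtained precisely via the $E$--$F$ decomposition above. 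So your argument ultimately calls the same computation the paper carries out explicitly; what you gain is a clear explanation of \emph{why} two constants suffice, at the cost of leaving those constants unchecked. If you want a self-contained proof along your lines, you should actually compute $\xi_{\frac32}(E_{\frac32,0})$ (e.g.\ from the non-holomorphic Fourier coefficients listed just after the lemma in the paper) and the holomorphic constant term of $\mathcal H|w_4$.
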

\begin{proof}
Set $$F_{\frac{3}{2}, s}(\tau)=\tau^{-\frac{3}{2}}|\tau|^{-2s}E_{\frac{3}{2}, s}(-1/(4\tau)).$$
In \cite{HZ} (Sect. 2.2), it was shown that 
$$\mathcal{H}(\tau)=\frac{-1}{96}\left ( (1-i)E_{\frac{3}{2}, 0}(\tau)-i F_{\frac{3}{2}, 0}(\tau) \right ).$$
Then, for all $\tau \in \mathbb H$, we have
\begin{align*}\label{explinv}
 (\mathcal{H}|w_4)(\tau) 
 &= 4^{\frac{3}{4}} (4 \tau)^{-\frac{3}{2}} \mathcal{H}(-1/4 \tau)\\
 &= 4^{\frac{3}{4}} (4 \tau)^{-\frac{3}{2}}\left(\frac{-1}{96}((1-i)E_{\frac{3}{2},0}(-1/4\tau) - iF_{\frac{3}{2},0}(-1/4\tau))\right)\\
&=  \frac{i (1-i)}{96 \sqrt{8}}    \left( i  F_{\frac{3}{2},0}(\tau) + \frac{1}{1-i} \tau^{-\frac{3}{2}} (-1/4\tau)^{-\frac{3}{2}}  E_{\frac{3}{2},0} (\tau) \right) \nonumber\\
&=
\frac{i+1}{\sqrt{8}} \frac{-1}{96}   \left((1-i) E_{\frac{3}{2},0} (\tau)- i  F_{\frac{3}{2},0}(\tau) \right)-\frac{3(1+i)(1-i)}{96\sqrt{8}}E_{\frac{3}{2}, 0}(\tau)
\end{align*}
From this we deduce the lemma.
\end{proof}
Because of Lemma \ref{H}, to apply Theorem \ref{thm:summationformula} we need information about the Fourier coefficients of $E_{\frac{3}{2}, 0}.$ These are given in \cite{HZ, Zens}. Specifically, in \cite{HZ}, Sect. 2.2, $E_{\frac{3}{2}, s}(\tau)$ is decomposed
as \begin{equation}
    \label{FEE32}
E_{\frac{3}{2}, s}(\tau)=\sum_{n \in \mathbb Z} E(-n, 1+2s)\alpha_n(s, v)q^n
\end{equation}
where
$$\alpha_n(s, v):=v^{-\frac{1}{2}-2s}e^{2 \pi n v} \int_{-\infty}^{\infty} \frac{e^{-2 \pi i n vt}dt}{(t+i)^{\frac{3}{2}}(1+t^2)^s}$$
and where $E(-n, 1+2s)$ was computed in (61) of \cite{Zens} to equal
$$E(-n, 1+2s)=\frac{L(\chi_{-n}, 1+2s)(1-2^{-1-2s}\chi_{-n}(2))}{\zeta(2+4s)(1-2^{-2-4s})}r_n^{-1-4s}\sum_{m|r_n}\mu(m)\chi_{-n}(m)m^{2s}\sigma_{4s+1}(r_n/m).$$ Here $r_n^2$ is the largest odd square dividing $n$ and $\chi_{-n}$ is the character of $\mathbb Q(\sqrt{-n}).$ 
By \cite{HZ} (pg. 95) we have that $\alpha_n(0, v)=-4 \pi (1+i)n^{1/2},$ if $n>0$ and $0$ otherwise. We also have that
\begin{equation}
\label{alpha'}\alpha'_n(0, v)=- \pi (1+i) v^{-\frac{1}{2}}\int_1^{\infty}e^{-4 \pi |n| vt} t^{-\frac{3}{2}}dt \qquad \text{for $n \le 0$}.
\end{equation}
From these identities we deduce, first, that, the coefficient of $q^n$ in \eqref{FEE32} equals
$$\frac{-16 (1+i) \sqrt{n}}{\pi r_n}L(\chi_{-n}, 1)(2-\chi_{-n}(2))\sum_{m|r_n}\mu(m)\chi_{-n}(m)\sigma_{1}(r_n/m) \quad \text{if $n>0$}$$
and $0$ if $n<0$ and $-n \neq \Box$.  

If $n \le 0$ and $-n=\ell^2$, ($\ell \ge 0$), then $\chi_{-n}$ is the trivial character and thus $L(\chi_{-n}, 1+2s)=\zeta(1+2s)$ has a pole at $s=0$ with residue $1/2.$ Therefore, $E(-n, 1+2s)$ has a pole at $s=0$ with residue 
$$\frac{1}{2\zeta(2)}\left ( \frac{1-2^{-1}}{1-2^{-2}} \right ) r_n^{-1} \sum_{m|r_n}\mu(m) \sigma_1(r_n/m)=\frac{2}{\pi^2}.$$
Here we've used that, by M\"obius Inversion, $\sum_{m|r_n}\mu(m) \sigma_1(r_n/m)=r_n.$ This, with \eqref{alpha'}, implies that, if $n=-\ell^2 \le 0$ the value of $E(-n, 1+2s) \alpha_n(s, v)$ at $s=0$ is
$$\text{Res}_{s=0}(E(-n, 1+2s)) \alpha'_{n}(0, v)= \begin{cases}
\frac{-4}{\sqrt{\pi}}(1+i) \ell \Gamma \left ( \frac{-1}{2}, 4 \pi \ell^2 v \right ) \quad \text{if $n=-\ell^2<0$},\\
\frac{-4(1+i)}{\pi \sqrt{v}} \quad \text{if $n=0.$}
\end{cases}
$$
Hence we can apply Theorem \ref{thm:summationformula}, with the following choices of $(a^{\pm}(n)), (b^{\pm}(n))$. For $n>0$ and the notation 
\begin{equation}\label{Tn}
T_n:=\frac{1}{r_n}\sum_{m|r_n}\mu(m)\chi_n(m)\sigma_{1}(r_n/m).    
\end{equation} we consider
\begin{align}\label{as}
&a^+(n) = H(n) \\
& a^-(n) = \frac{\sqrt{n}}{4\sqrt{\pi}} \text{ if $n = \Box$ and $0$ otherwise} \nonumber \\
&b^+(n) = \frac{1+i}{\sqrt{8}}H(n)+\frac{(1+i) \sqrt{n}}{2\sqrt{2}\pi}L(\chi_{-n}, 1)(2-\chi_{-n}(2))T_n \nonumber \\
&b^-(n) = \frac{1+i}{8\sqrt{2\pi}}\sqrt{n} -\frac{1}{32\sqrt{2}}\frac{-4}{\sqrt{\pi}}(1+i) \sqrt{n} =\frac{(1+i)\sqrt{n}}{4\sqrt{2\pi}} \quad \text{if $n=\Box$ and $0$ otherwise} \nonumber \\
& a^+(0)=-1/12 \nonumber \\
&a^{-}(0) = \frac{1}{8\pi} \nonumber \\
&b^+(0) = -\frac{1+i}{12\sqrt{8}} \nonumber \\ 
&b^-(0) = \frac{1+i}{8\sqrt{8}\pi} -\frac{1}{32\sqrt{2}}\frac{-4(1+i)}{\pi} = \frac{3(1+i)}{16\pi\sqrt{2}},\nonumber
\end{align}

We have $\mu_f^+=\mu_g^+= \frac{1}{2} + \epsilon$ by the growth of Hurwitz class numbers \cite[Lemma 7.2]{Walker-2024} and by $L(\chi_{-n}, 1) \ll \log n.$ From \eqref{as} we get $\mu_f^- =\mu_g^- = 1/2$. 

With these remarks, upon an application of Theorem \ref{thm:extendedsummation}, we deduce the following. 
\begin{theorem}\label{Prof82} Let $H(n)$ denote the Hurwitz class number, $\chi_{-n}$ the character associated with $\mathbb Q(\sqrt{-n})$ and $T_n$ (resp. $g_{\rho}(n,x)$) be as given in \eqref{Tn} (resp. Theorem \eqref{thm:perrongen}). Then, for $\rho>0$. 
\begin{align}\label{eq:Prof82}
&\frac{1}{\Gamma(\rho+1)}\sum_{n\leq x}H(n)(x-n)^{\rho} +\frac{x^{\rho}}{8 \pi^{\frac{3}{2}} i } \sum_{n \le \sqrt{x}} n g_{\rho} \left(n^2, x \right) \nonumber
\\&-x^{\rho} \left ( \frac{1}{12 \Gamma(\rho+1)}+\frac{3xi^{\frac{3}{2}}(1+i)}{16 \Gamma(\rho+2)}-\frac{\sqrt{2 \pi x}}{8 \pi \Gamma(\rho+3/2)}-\frac{(1+i)(\pi i x)^{\frac{3}{2}}}{12\sqrt{8} \Gamma(\rho+5/2)}\right ) \nonumber \\
&=  \frac{-i^{\frac{3}{2}} (1+i)x^{\frac{2\rho+3}{4}}}{2 \sqrt{2}\pi^{\rho}}\sum_{n \ge 1} \frac{H(n)+\sqrt{n}\pi^{-1}L(\chi_{-n}, 1)(2-\chi_{-n}(2))T_n}{n^{\frac{2\rho+3}{2}}}J_{\rho+\frac{3}{2}}\left(2\pi\sqrt{nx}\right) \nonumber 
\\
&-
\frac{i^{\frac{3}{2}}(1+i)x^{\frac{\rho+1}{2}}}{4 \sqrt{2}\pi^{\rho+1}}\sum_{n \ge 1}\frac{1}{n^{\rho+1}}
\int_{0}^{1/2}\frac{u^{\frac{\rho}{2}}}{(1-u)^{\frac{1+\rho}{2}}
}J_{\rho+1}\left(2\pi n \sqrt{\frac{x(1-u)}{u}}\right)du.
\end{align}
\end{theorem}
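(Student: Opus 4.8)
The plan is to specialize the general summation formula (Theorems~\ref{thm:summationformula} and~\ref{thm:extendedsummation}) to the harmonic Maass form $f=\mathcal H$ of weight $k=\tfrac32$ and level $N=4$, whose membership in $H_{3/2}^{\text{Eis}}(4)$ is Zagier's theorem~\cite{zagier75}. First I would record the Fourier data: the coefficients $a^{\pm}(n)$ of $f$ are read off directly from~\eqref{eq:Zagier-EisensteinSeries}, and, setting $g=f|w_4$, Lemma~\ref{H} gives $g=\tfrac{1+i}{\sqrt 8}\mathcal H-\tfrac1{32\sqrt 2}E_{3/2,0}$, so the coefficients $b^{\pm}(n)$ of $g$ are determined by those of $E_{3/2,0}$. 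The latter were extracted above from the decomposition~\eqref{FEE32} together with the values of $E(-n,1+2s)$ and $\alpha_n(s,v)$ at $s=0$ taken from~\cite{HZ,Zens}: the holomorphic part involves $L(\chi_{-n},1)$ times the arithmetic factor $T_n$ of~\eqref{Tn}, and the non-holomorphic part is supported on $-n=\Box$. Assembling these yields the explicit lists recorded in~\eqref{as}, in particular $a^{+}(0)=-\tfrac1{12}$, $a^{-}(0)=\tfrac1{8\pi}$, $b^{+}(0)=-\tfrac{1+i}{12\sqrt 8}$ and $b^{-}(0)=\tfrac{3(1+i)}{16\pi\sqrt 2}$.

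Next I would verify the hypotheses needed to invoke the extended formula. By $H(n)\ll_{\varepsilon}n^{1/2+\varepsilon}$ (\cite[Lemma 7.2]{Walker-2024}) and $L(\chi_{-n},1)\ll\log n$ one may take $\mu_f^{+}=\mu_g^{+}=\tfrac12+\varepsilon$, while from~\eqref{as} the sequences $a^{-}(n),b^{-}(n)$ are supported on perfect squares with $|a^{-}(n)|,|b^{-}(n)|\ll\sqrt n$, so $\mu_f^{-}=\mu_g^{-}=\tfrac12$. As $k=\tfrac32\ge1$, the threshold $\rho_0$ of Theorem~\ref{thm:summationformula} equals $\max\{2+2\mu^{\pm}-k,\,k\}=\tfrac32+2\varepsilon$, so that theorem already gives~\eqref{eq:sumformula} for $\rho>1$. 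Because the non-holomorphic coefficients are supported on squares, the Serre--Stark argument of Remark~\ref{SeSt} shows the additional $o(1)$-equidistribution conditions of Theorem~\ref{thm:extendedsummation} (which come from Theorems~II and~III of~\cite{ChandNaras61}) hold for the relevant sequences; hence~\eqref{eq:refined-summation}, equivalently~\eqref{eq:sumformula} via Lemma~\ref{thm:integral-transform}, is valid on the enlarged range $\rho>\rho_0-\tfrac32=2\varepsilon$, i.e.\ for every $\rho>0$.

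Finally I would substitute $k=\tfrac32$, $N=4$ and the values~\eqref{as} into~\eqref{eq:sumformula} and simplify. On the left, the $a^{+}$-term is $\tfrac1{\Gamma(\rho+1)}\sum_{n\le x}H(n)(x-n)^{\rho}$; since $a^{-}(m^{2})=\tfrac{m}{4\sqrt\pi}$ and $a^{-}(n)=0$ otherwise, the $a^{-}$-term collapses to $\tfrac{x^{\rho}}{8\pi^{3/2}i}\sum_{m\le\sqrt x}m\,g_{\rho}(m^{2},x)$; and $Q_{\rho}(x)$ from~\eqref{Qrho}, using $N^{k/2-1}=2^{-1/2}$, $N^{k/2}=2\sqrt 2$, $i^{k}=i^{3/2}$ and the constant terms above, is exactly the combination of $x^{\rho},x^{\rho+1},x^{\rho+1/2},x^{\rho+3/2}$ on the second line of~\eqref{eq:Prof82}. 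On the right, $4\pi\sqrt{nx/N}=2\pi\sqrt{nx}$; writing $b^{+}(n)=\tfrac{1+i}{\sqrt 8}\bigl(H(n)+\tfrac{\sqrt n}{\pi}L(\chi_{-n},1)(2-\chi_{-n}(2))T_n\bigr)$ and collecting constants turns the $b^{+}$-series into the first series of~\eqref{eq:Prof82}, while using $b^{-}(m^{2})=\tfrac{(1+i)m}{4\sqrt{2\pi}}$ (which reduces the factor $n^{(\rho-1)/2+k}$ in the denominator to $m^{\rho+1}$) turns the $b^{-}$-series into the second. This establishes~\eqref{eq:Prof82}.

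I expect the only genuinely delicate step to be the verification of the equidistribution-type hypotheses of Theorem~\ref{thm:extendedsummation} for the sequences $b^{\pm}(n)$, where one uses crucially that they are supported on perfect squares with $n^{1/2}$ growth, as recorded in Remark~\ref{SeSt}; the evaluation of $Q_{\rho}$ and the rearrangement of the two Bessel series are routine bookkeeping with half-integral powers and the constants of~\eqref{as}, and the extraction of the Fourier coefficients of $E_{3/2,0}$ from~\cite{HZ,Zens} has already been carried out above.
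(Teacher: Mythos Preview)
Your proposal is correct and follows essentially the same route as the paper: compute the Fourier data of $\mathcal H$ and of $\mathcal H|w_4$ via Lemma~\ref{H} and the $E_{3/2,0}$ expansion from \cite{HZ,Zens} to obtain the list~\eqref{as}, check $\mu_f^{\pm}=\mu_g^{\pm}\le \tfrac12+\varepsilon$ so that $\rho_0=\tfrac32+2\varepsilon$, and then invoke Theorem~\ref{thm:extendedsummation} (the $k=\tfrac32$ extension of Theorem~\ref{thm:summationformula}) before substituting and simplifying. The only remark is that the $o(1)$-type conditions you flag are not additional hypotheses of Theorem~\ref{thm:extendedsummation}; they are verified inside its proof using the Serre--Stark support structure of the $b^{-}(n)$ and the polynomial growth of the $b^{+}(n)$, so once you know $f\in H_{3/2}^{\text{Eis}}(4)$ you may apply that theorem directly.
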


Combining Theorem \ref{th:SummationAsymptotic} we deduce the following.
\begin{theorem}\label{applHur} 
For $\rho>1$ and any $\epsilon >0$, we have
\begin{align}\label{eq:applHur}
&\frac{1}{\Gamma(\rho+1)}\sum_{n\leq x}H(n)(x-n)^{\rho} 
=\frac{ \pi^{\frac{3}{2}}}{24 \Gamma \left (\rho + \frac{5}{2} \right )}
x^{\rho + \frac{3}{2}} + O(x^{\rho+1 + \epsilon}).
\end{align}    
\end{theorem}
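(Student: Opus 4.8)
The plan is to read off the asymptotic from Theorem~\ref{th:SummationAsymptotic} applied to Zagier's form $\mathcal H\in H_{3/2}^{\text{Eis}}(4)$, with the coefficient data assembled in \eqref{as} (so that $k=\tfrac32$, $N=4$ and $a^+(n)=H(n)$), and then to check that the two remaining contributions on the left-hand side are of lower order. Recall that here the relevant growth exponents are $\mu_f^+=\mu_g^+=\tfrac12+\epsilon$ (from $H(n)\ll n^{1/2+\epsilon}$ and $L(\chi_{-n},1)\ll\log n$) and $\mu_f^-=\mu_g^-=\tfrac12$. Since the claimed bound is stronger for smaller $\epsilon$, it suffices to prove it for all sufficiently small $\epsilon>0$, and so we may assume $\epsilon<\tfrac{\rho-1}{2}$ throughout.

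The first step is to verify that $\rho>1$ is admissible. As $k=\tfrac32\ge1$, one computes $\rho_0=\tfrac32+2\epsilon$, so the hypothesis $\rho>\rho_0-\tfrac12=1+2\epsilon$ of Theorem~\ref{th:SummationAsymptotic} holds by our choice of $\epsilon$. Since $k=\tfrac32\in(1,2]$, the first case of Theorem~\ref{th:SummationAsymptotic} then gives
\begin{equation}\label{eq:Hur-asym-step}
\frac{1}{\Gamma(\rho+1)}\sum_{n\leq x}H(n)(x-n)^{\rho}+\frac{x^{\rho}}{2\pi i}\sum_{n\le x}a^-(n)\,g_{\rho}(n,x)=\frac{b^{+}(0)\,i^{3/2}(2\pi)^{3/2}}{4^{3/4}\,\Gamma(\rho+\tfrac52)}\,x^{\rho+\tfrac32}+O(x^{\rho+1}).
\end{equation}

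It remains to estimate the second sum on the left and to simplify the constant. For the sum, I would apply Lemma~\ref{suppl}; the key point is that the non-holomorphic part of $\mathcal H$ is supported on perfect squares, with $a^-(m^2)=\tfrac{m}{4\sqrt\pi}$, so that $L^-(\mathcal H,s)=\tfrac{1}{4\sqrt\pi}\zeta(2s-1)$ converges absolutely for $\Re(s)>1$, while $W_{-1/2}(s)$ is holomorphic on $\Re(s)=1+\epsilon$ by Corollary~\ref{thm:Wbound}. Taking $\alpha=1+\epsilon$ in Lemma~\ref{suppl} (which also only needs $\rho>1-k=-\tfrac12$) then gives $\sum_{n\le x}a^-(n)g_\rho(n,x)=O(x^{1+\epsilon})$, so the second term in \eqref{eq:Hur-asym-step} is $O(x^{\rho+1+\epsilon})$ and is absorbed into the error. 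For the constant, I would substitute $b^+(0)=-\tfrac{1+i}{12\sqrt8}$ and $i^{3/2}=\tfrac{i-1}{\sqrt2}$, and use the identities $(1+i)(i-1)=-2$, $\sqrt8\cdot\sqrt2=4$ and $(2\pi)^{3/2}=4^{3/4}\pi^{3/2}$: these collapse the constant to $\tfrac{\pi^{3/2}}{24\,\Gamma(\rho+5/2)}$, and \eqref{eq:applHur} follows.

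Most of this is bookkeeping once Theorems~\ref{thm:summationformula} and \ref{th:SummationAsymptotic} are available; the one place where the arithmetic of the example is genuinely needed, and the step I expect to require the most care, is the estimate of the auxiliary sum $\sum_{n\le x}a^-(n)g_\rho(n,x)$. A bound using only $a^-(n)\ll n^{1/2}$ would force $\alpha>\tfrac32$ in Lemma~\ref{suppl} and hence an error $O(x^{\rho+3/2+\epsilon})$ that would swamp the main term; it is precisely the fact that the shadow of $\mathcal H$ is a multiple of the theta function $\vartheta$, so that $L^-(\mathcal H,s)$ has abscissa of absolute convergence equal to $1$, that makes this term genuinely lower order. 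This mirrors the role played by the support of the coefficients in the Chandrasekharan--Narasimhan framework.
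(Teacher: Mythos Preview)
Your proof is correct and follows essentially the same route as the paper: apply Theorem~\ref{th:SummationAsymptotic} in the case $k=\tfrac32$ to isolate the main term, then invoke Lemma~\ref{suppl} with $\alpha=1+\epsilon$ (using that $L^-(\mathcal H,s)$ is a multiple of $\zeta(2s-1)$) to absorb the $g_\rho$-sum into the error. Your additional remarks on choosing $\epsilon<(\rho-1)/2$ to guarantee $\rho>\rho_0-\tfrac12$, and on why the square-supported shadow is essential for getting abscissa $1$ rather than $\tfrac32$, are accurate and make explicit points that are implicit in the paper's argument.
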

\begin{proof} From Theorem \ref{th:SummationAsymptotic} we have
\begin{align}
&\frac{1}{\Gamma(\rho+1)}\sum_{n\leq x}H(n)(x-n)^{\rho} -\frac{i x^{\rho}}{8 \pi^{\frac{3}{2}}  } \sum_{n \le \sqrt{x}} n g_{\rho} \left(n^2, x \right)=\frac{ \pi^{\frac{3}{2}}}{24 \Gamma \left (\rho + \frac{5}{2} \right )} x^{\rho + \frac{3}{2}} + O(x^{\rho+1}).
\end{align}
By Lemma \ref{suppl}, the second sum on the left hand side is $O(x^{\rho + 1 + \epsilon})$ (where we are taking $\alpha = 1 + \epsilon$ in that lemma and using that $L_f^-(s)$ is a multiple of $ \sum_{n=1}^{\infty} \frac{n}{n^{2s}}.$
\end{proof}

\subsubsection{Numerical results}
We tested the asymptotic given by Theorem \ref{applHur} for various $\rho$. In the range of convergence, the main term exhibits the expected asymptotic behavior, as illustrated in Figure \ref{fig:varyrho}. 

Furthermore, in the region of absolute convergence $\rho>3/2$ and in the region of conditional convergence, $\rho \in (1,3/2)$, we tested the asymptotic behavior for the first error term as predicted by Theorem \ref{th:SummationAsymptotic}. The ratio of $\sum_{n\leq x} H(n)(x-n)^{\rho} - \frac{ \pi^{\frac{3}{2}}\Gamma(\rho+1)}{24 \Gamma \left (\rho + \frac{5}{2} \right )}x^{\rho + 3/2}$ and the first error term $\frac{x^{\rho + 1}3(1+i)i^{3/2}}{16\Gamma(\rho + 2)}$ for $\rho = 1.5, 2, 5, 10$ are in Figure \ref{fig:error1}

\begin{figure}[ht!]
    \centering
    \includegraphics[width=12cm]{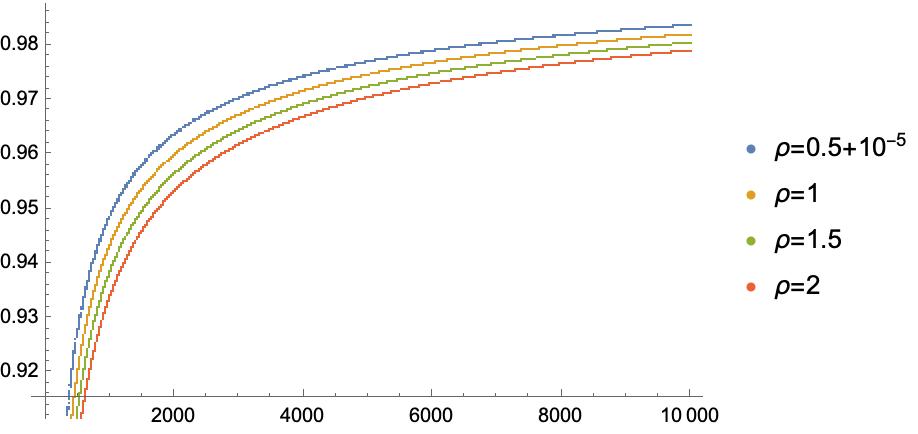}
    \caption{Ratio of $\sum_{n\leq x} H(n)(x-n)^{\rho} $  and the leading term $\frac{ \pi^{\frac{3}{2}}\Gamma(\rho+1)}{24 \Gamma \left (\rho + \frac{5}{2} \right )}x^{\rho + 3/2}$
for values of $\rho = 0.5 + 10^{-5},1,1.5,2$}
    \label{fig:varyrho}
\end{figure}

\begin{figure}
    \centering
    \includegraphics[width=11cm]{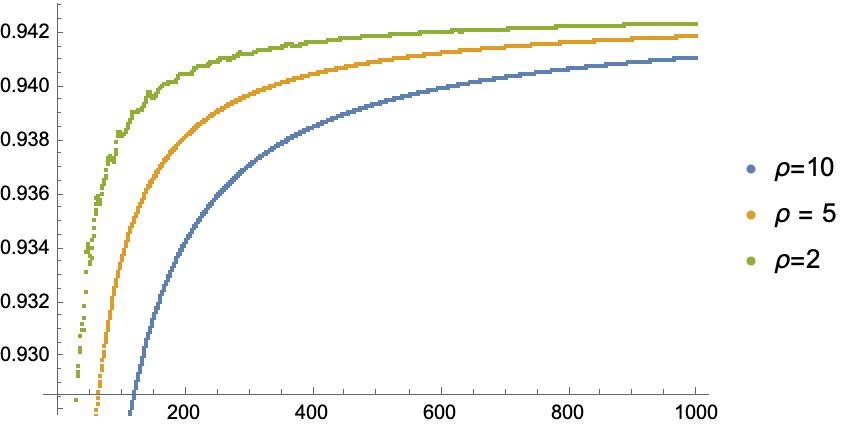}
    \caption{The ratio of $\sum_{n\leq x} H(n)(x-n)^{\rho} - \frac{ \pi^{\frac{3}{2}}\Gamma(\rho+1)}{24 \Gamma \left (\rho + \frac{5}{2} \right )}x^{\rho + 3/2}$ and the first error term $\frac{x^{\rho + 1}3(1+i)i^{3/2}}{16\Gamma(\rho + 2)}$}
    \label{fig:error1}
\end{figure}

We can extend these numerical computations to negative values of $\rho$. We observe more erratic behavior as $\rho$ moves away from the above range, but the asymptotic behavior of the main term appears to be similar. See Figure \ref{fig2}. 

\begin{figure}[ht!]
\centering
\begin{subfigure}{12cm}
\includegraphics[width=12cm,trim={1.5cm 0 0 0},clip]{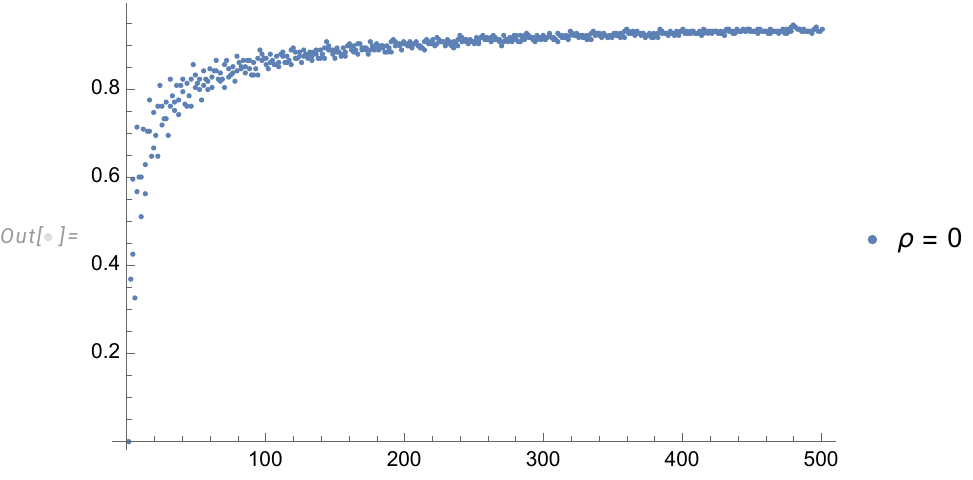}    
\end{subfigure}  
\begin{subfigure}{12cm}
\includegraphics[width=12cm,trim={1.5cm 0 0 0},clip]{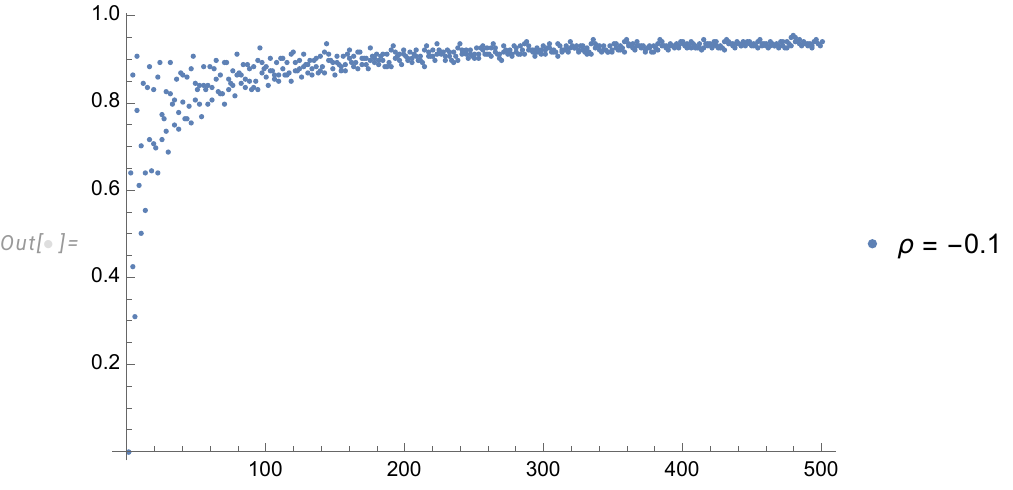}
\end{subfigure}
\begin{subfigure}{12cm}
    \includegraphics[width=12cm,trim={1.5cm 0 0 0},clip]{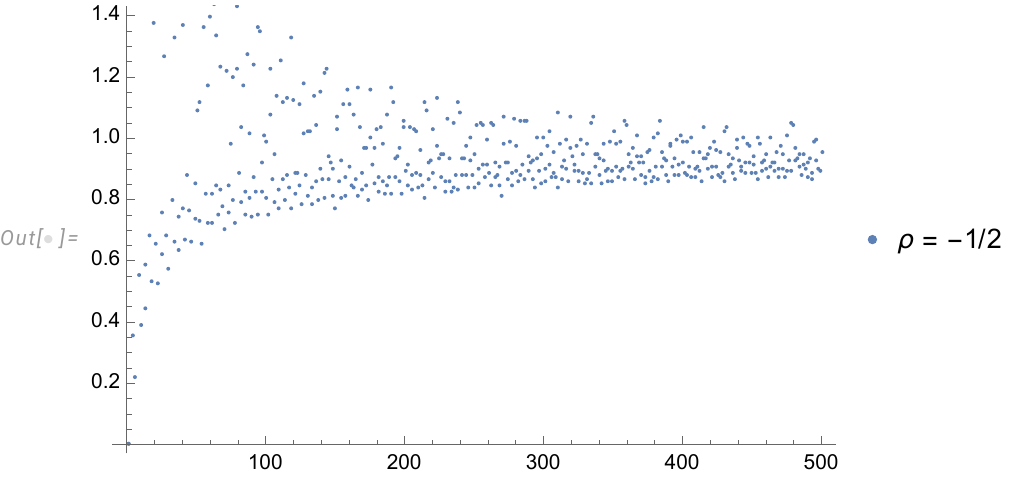}
\end{subfigure}
\caption{$\sum_{n\leq x} H(n)(x-n)^{\rho} $  and the potential leading term $\frac{ \pi^{\frac{3}{2}}\Gamma(\rho+1)}{24 \Gamma \left (\rho + \frac{5}{2} \right )}x^{\rho + 3/2}$
for values of $\rho = 0,-0.1,-0.5$}
\label{fig2}
\end{figure}

In Figure \ref{fig2} we notice that, although the asymptotic behavior for negative $\rho$ is generally similar to that for positive $\rho$, the pattern of oscillations is increasingly different from the case of $\rho>0$. This indicates that the range of summation formula for $H(n)$ that we managed to extend to $\rho>0$ using results from \cite{ChandNaras61} may be the largest possible. Equally, the behavior for negative $\rho$ exhibited in Fig. \ref{fig2} may be considered as evidence that an analogue of Theorem IV of \cite{ChandNaras61} exists for our Theorem \ref{thm:summationformula}. Theorem IV of \cite{ChandNaras61} replaces the sums $F_{\nu}$ with their Cesaro means (see \cite[13.3]{Titch}) of increasing order for smaller $\rho$. Extending their Theorem IV to our setting and bounding the resulting Cesaro means could provide a future approach to extending our summation formula and asymptotic to arbitrarily small $\rho$.

\subsection{Eisenstein series of negative half-integer weight.}\label{neg1/2}
We can also apply our theorem to non-holomorphic Cohen–Eisenstein series of {\it negative} half-integer weight. More precisely, we will apply it to the harmonic lift of a certain special value of the non-holomorphic Cohen–Eisenstein series. That lift was considered in \cite{Mizuno}, \cite{Wagner} and its coefficients involve Dirichlet $L$-values. The computations of the coefficients have been carried out in \cite{Sh75} and \cite{GH}, but we follow the presentation of \cite{IS}.

For odd $k \ge 5$, $\tau=x+iy \in \mathbb H$ and $\operatorname{Re}(s)> 2+\frac{k}{2}$ consider the non-holomorphic Eisenstein series:
$$
E(k, s, \tau) := y^{s/2} \sum_{\substack{\text{odd} \, \,  d \ge 1 \\ c \in \mathbb Z}} \left( \frac{4c}{d} \right) \varepsilon_d^{-k} (4c\tau + d)^{k/2} |4c\tau + d|^{-s}
$$
and set 
\begin{equation}\label{P} P(\tau):=E(\tau)-2^{1-\frac{k}{2}}\cos \left ( \frac{\pi k}{4} \right ) E\left (\frac{-1}{4 \tau}\right )(-2i \tau)^{\frac{k-4}{2}},
\end{equation}
where $E(\tau):=E(k-4, k-2, \tau).$ They are both weight $2-\frac{k}{2}$ harmonic Maass forms of polynomial growth for $\Gamma_0(4).$ To describe its Fourier expansion we introduce some notation. 
We first decompose any $n \in \mathbb{Z}$ as $n=t m^2$ where $t$ is square-free and let $\chi_n$ be the character given by 
$$\chi_n(a)=\left ( \frac{(-1)^{\frac{k+1}{2}}4t}{a} \right ) \qquad \text{for $(a, 4t)=1$.}$$
Further set
$$T_{n, k}:=\sum_{\substack{\text{odd} \, a, b>0 \\ab|m}}\mu(a) \chi_n(a) a^{\frac{1-k}{2}}b^{2-k} \qquad \text{and}$$
$$A_k(n):=\frac{1+i^{-k}}{2^{k}}+\sum_{j=2}^{\infty} \sum_{\nu=1}^2(1+(-1)^{\nu}i^{-k}) 2^{-\frac{kj}{2}} S_{\nu}(j, n) \quad \text{for} \, \, S_{\nu}(j, n):= \sum_{\ell=1}^{2^j}\left ( \frac{(-1)^{\nu}2^j}{\ell}\right )e^{\frac{2 \pi i n \ell}{2^j}}.$$

This is a finite sum computed explicitly in pgs 276-277 of \cite{IS}. In particular, $S_{\nu}(j, n)$ is $0$ if $j>\text{ord}_2(n)+3$ and, otherwise,  $|S_{\nu}(j, n)| \le 2^j.$ Hence $|A_k(n)| \le 1+4 \sum_{j \le \text{ord}_2(n)+2}2^{(1-\frac{k}{2})j}=O(1).$
Finally, we define
\begin{equation*}
    \tau_n \left (y, 1, \frac{k}{2}-1 \right ):=\int_{-\infty}^{\infty}e^{-2 \pi i n x}\frac{(x-iy)^{1-\frac{k}{2}}}{x+iy}dx= \begin{cases} (-2i)^{2-\frac{k}{2}} \pi y^{1-\frac{k}{2}}e^{-2 \pi n y} \qquad \text{if $n>0$}, \\
(-2i)^{2-\frac{k}{2}} \pi y^{1-\frac{k}{2}}e^{-2 \pi n y} \frac{\Gamma \left ( \frac{k}{2}-1, -4 \pi n y \right ) }{\Gamma \left ( \frac{k}{2}-1\right )} \quad \text{if $n<0$}, \\
(-2i)^{2-\frac{k}{2}}y^{1-\frac{k}{2}}\pi \qquad \text{if $n=0.$} 
\end{cases}
\end{equation*}
The final expressions in the last identity were derived 
with (13.18.2) and (13.18.5) of \cite{NIST:DLMF}.
Then, by \cite[Proposition 1]{Sh75}, we have
\begin{equation}  E\left (\frac{-1}{4 \tau}\right )(-2i \tau)^{\frac{k-4}{2}}=-\sum_{n \in \mathbb Z} \alpha(n) (2i)^{-\frac{k}{2}}y^{\frac{k}{2}-1}e^{2 \pi i n x}\tau_n \left (y, 1, \frac{k}{2}-1 \right ),
    \label{ISFourier}
\end{equation}
where
$$\alpha(n)=\frac{L \left (\chi_n, \frac{k-1}{2} \right )(1-\chi_n(2)2^{\frac{1-k}{2}})}{\zeta(k-1)(1-2^{1-k})}T_{n, k},$$
and, from pg 276 of \cite{IS},
$$P(\tau)=y^{\frac{k}{2}-1}+y^{\frac{k}{2}-1}\sum_{n \in \mathbb{Z}}\alpha(n)A_k(n) e^{2 \pi i n x} \tau_n \left (y, 1, \frac{k}{2}-1 \right ).$$

To apply our summation formula to $P$, we also need the Fourier expansion of $P|w_4.$ We start with the following.
\begin{lemma} With the definition of the action of $w_4$ given by \eqref{eq:fricke} (with $k-4$ in place of $k$), we have
$$P|w_4=-2^{1-\frac{k}{2}}\cos \left ( \frac{\pi k}{4}\right )P+\left (1-2^{2-k} \cos^2 \left ( \frac{\pi k}{4}\right ) \right ) E \left ( \frac{-1}{4 \tau}\right ) (-2i \tau)^{\frac{k-4}{2}}.$$
\end{lemma}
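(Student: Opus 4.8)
The plan is to prove the identity by a direct computation: substitute $\tau\mapsto -1/(4\tau)$ into the defining combination \eqref{P} and re-express the outcome as a combination of $P$ and of $\widetilde E(\tau):=E(-1/(4\tau))(-2i\tau)^{\frac{k-4}{2}}$. Set $c:=2^{1-k/2}\cos(\pi k/4)$, so that by \eqref{P} we have $P=E-c\,\widetilde E$. Plugging $P=E-c\,\widetilde E$ into the claimed right-hand side and using $c^2=2^{2-k}\cos^2(\pi k/4)$, one sees that the assertion is equivalent to the cleaner statement $P|w_4=\widetilde E-cE$. Thus it suffices to evaluate $E|w_4$ and $\widetilde E|w_4$ and combine them linearly.

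The computation of $E|w_4$ is a pure definition-chase. Applying \eqref{eq:fricke} (with the weight $2-\frac k2$ that these non-holomorphic Eisenstein series carry, in the normalization of the statement) produces a constant multiple of $(4\tau)^{-(2-k/2)}E(-1/(4\tau))$; since the exponent is $\frac{k-4}{2}$, and since the relevant arguments $4\tau$, $-2i\tau$, $i/(2\tau)$ all lie in half-planes on which the fractional power maps are single-valued, this rewrites -- after bookkeeping the powers of $4$ and the principal branches -- as a root-of-unity multiple of $\widetilde E(\tau)$. For $\widetilde E|w_4$ one applies \eqref{eq:fricke} a second time, using the elementary identities $-1/(4\cdot(-1/(4\tau)))=\tau$ and $-2i\cdot(-1/(4\tau))=i/(2\tau)$, and uses that $w_4^2$ acts on weight $2-\frac k2$ forms as a fixed scalar (the same reciprocal-root-of-unity phenomenon that produces the $i^k$ in Theorem~\ref{thm:ShankadharSingh}); this gives $\widetilde E|w_4$ as a root-of-unity multiple of $E(\tau)$, with the same root of unity as before. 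Putting the two together, $P|w_4=E|w_4-c\,\widetilde E|w_4$ equals this common root of unity times $\widetilde E-cE$, and the normalization fixed by the parenthetical in the statement (the half-integral-weight Fricke multiplier, the ``$k-4$ in place of $k$'') is precisely what makes that root of unity equal $1$. Finally, rewriting $\widetilde E-cE=-c(E-c\,\widetilde E)+(1-c^2)\widetilde E=-cP+(1-c^2)\widetilde E$ yields the lemma.

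The routine-but-delicate part, and the main obstacle, is exactly this branch-and-multiplier bookkeeping: keeping track of the principal branches of $(4\tau)^{-(2-k/2)}$, $(-2i\tau)^{(k-4)/2}$ and $(-2i\cdot(-1/(4\tau)))^{(k-4)/2}=(i/(2\tau))^{(k-4)/2}$, together with the precise Fricke automorphy factor in half-integral weight $2-\frac k2$. It is the normalizing constant $2^{1-k/2}\cos(\pi k/4)$ built into the definition of $P$ that forces $P|w_4$ back into the two-dimensional span of $P$ and $\widetilde E$ rather than out of it. Should this bookkeeping turn cumbersome, one can instead compare Fourier expansions: $\widetilde E$ has the expansion \eqref{ISFourier}, $P$ has the expansion recorded immediately afterward, both $P$ and $\widetilde E$ are weight $2-\frac k2$ harmonic Maass forms on $\Gamma_0(4)$ by construction (cf.\ Proposition~\ref{f|}), and hence it is enough to match finitely many leading Fourier coefficients of $P|w_4$ with those of $-cP+(1-c^2)\widetilde E$.
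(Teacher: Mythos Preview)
Your proposal is correct and follows essentially the same approach as the paper: both set $c=2^{1-k/2}\cos(\pi k/4)$, observe that $E|w_4=\widetilde E$ and $\widetilde E|w_4=E$, obtain $P|w_4=\widetilde E-cE$, and then rewrite this as $-cP+(1-c^2)\widetilde E$. The paper is considerably more terse, writing down $(P|w_4)(\tau)=E(-1/4\tau)(-2i\tau)^{(k-4)/2}-cE(\tau)$ directly without discussing the branch and multiplier bookkeeping you flag; your explicit attention to that point and the alternative Fourier-coefficient check are reasonable additions but not needed for the argument.
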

\begin{proof} Upon an application of \eqref{eq:fricke} on \eqref{P} we obtain
\begin{multline*}
    (P|w_4)(\tau)=E(-1/4\tau) (-2i\tau)^{(k-4)/2}-2^{1-k/2}\cos \left ( \frac{\pi k}{4}\right ) E(\tau)=\\
    -2^{1-\frac{k}{2}}\cos \left ( \frac{\pi k}{4}\right ) \left (P(\tau)+ \left ( 2^{1-k/2}\cos \left ( \frac{\pi k}{4}\right )-2^{k/2-1}\cos^{-1} \left ( \frac{\pi k}{4}\right )\right ) E(-1/4\tau)(-2i\tau)^{(k-4)/2} \right ).
\end{multline*}
From this we deduce the lemma.
\end{proof}
With these remarks, we can now describe the sequences $(a^{\pm}(n))$ and $(b^{\pm}(n))$ associated with $P$ in Theorem \ref{thm:summationformula}. For  $n>0$,
\begin{align}
    \label{apm}
&a^+(n)=(-2i)^{2-\frac{k}{2}} \pi \frac{L \left (\chi_n, \frac{k-1}{2} \right )(1-\chi_n(2)2^{\frac{1-k}{2}})}{\zeta(k-1)(1-2^{1-k})}T_{n, k}A_k(n) \\
 & a^-(n)=(-2i)^{2-\frac{k}{2}} \pi \frac{L \left (\chi_{-n}, \frac{k-1}{2} \right )(1-\chi_{-n}(2)2^{\frac{1-k}{2}})}{\zeta(k-1)(1-2^{1-k})\Gamma \left (\frac{k}{2}-1 \right )}T_{-n, k}A_k(-n)
 \nonumber
\\
&b^+(n)=\frac{\pi L \left (\chi_n, \frac{k-1}{2} \right )(1-\chi_n(2)2^{\frac{1-k}{2}})}{(-2i)^{\frac{k}{2}-2}\zeta(k-1)(1-2^{1-k})}T_{n, k} \left ( \frac{2^{2-k} \cos^2 \left ( \frac{\pi k}{4}\right )-1}{(2i)^{\frac{k}{2}}}-2^{1-\frac{k}{2}}\cos \left ( \frac{\pi k}{4}\right )A_k(n) \right ) \nonumber\\ 
 & b^-(n)=\frac{\pi L \left (\chi_{-n}, \frac{k-1}{2} \right )(1-\chi_{-n}(2)2^{\frac{1-k}{2}})T_{-n, k}}{(-2i)^{\frac{k}{2}-2}\zeta(k-1)(1-2^{1-k})\Gamma \left (\frac{k}{2}-1 \right )} \left ( \frac{2^{2-k} \cos^2 \left ( \frac{\pi k}{4}\right )-1}{(2i)^{\frac{k}{2}}}-2^{1-\frac{k}{2}}\cos \left ( \frac{\pi k}{4}\right )A_k(-n) \right ) \nonumber \\
& a^+(0)=2^{2-\frac{3k}{2}}i^{\frac{k}{2}-2}(1+i^{-k}) \pi \frac{\zeta(k-2)(1-2^{2-k})}{\zeta(k-1)(1-2^{1-k})}, \nonumber \\
& a^-(0)=1, \nonumber\\
&b^{+}(0)=\left ( 2^{2-k} \left (1-2^{2-k}\cos^2 \left ( \frac{\pi k}{4}\right ) \right )+2^{3-2k}\cos \left ( \frac{\pi k}{4}\right )(1+i^{-k}) i^{\frac{k}{2}} \right ) \pi \frac{\zeta(k-2)(1-2^{2-k})}{\zeta(k-1)(1-2^{1-k})} \nonumber \\
&b^-(0)=-2^{1-\frac{k}{2}}\cos \left ( \frac{\pi k}{4}\right ). \nonumber
\end{align}
Each $a^{\pm}(n)$, $b^{\pm}(n)$ is a product of $L(\chi_n, (k-1)/2)$ (bounded, since $k \ge 5$) with terms $O(A_k(n))$ which, as mentioned above, are bounded. Therefore, $\mu_P^{\pm}=\mu_{P|w_4}^{\pm}=\varepsilon$ and hence we can state the following application of Theorem \ref{thm:summationformula}:
\begin{theorem}
\label{ShimEis} Let $a^{\pm}(n), b^{\pm}(m)$ be given as in \eqref{apm}. Then, for $\rho>k-\frac{3}{2}$, 
\begin{align}
&\frac{1}{\Gamma(\rho+1)}\sum_{n\leq x} a^+(n)(x-n)^{\rho} +\frac{x^{\rho}}{2 \pi i } \sum_{n \le x} a^-(n) g_{\rho}\left(n, x \right) - Q_{\rho}(x) \\
&= i^{\frac{-k}{2}} x^{\frac{\rho}{2}+\frac{4-k}{4}} \left ( \frac{\sqrt{N}}{2 \pi}\right )^{\rho}\sum_{n \ge 1} \frac{b^+(n)}{n^{\frac{2\rho+4-k}{4}}}J_{\rho+\frac{4-k}{2}}\left(2\pi\sqrt{nx}\right) \nonumber 
\\
&+i^{\frac{-k}{2}}x^{\frac{\rho}{2}+\frac{1}{2}}\left ( \frac{\sqrt{N}}{2 \pi}\right )^{\rho+\frac{4-k}{2}-1}\sum_{n \ge 1}\frac{b^-(n)}{n^{\frac{\rho-k+3}{2}}}\int_{0}^{1/2}\frac{u^{\frac{\rho-k+1}{2}}}{(1-u)^{\frac{1+\rho}{2}}
}J_{\rho+1}\left(2\pi n \sqrt{\frac{x(1-u)}{u}}\right)du,
\end{align}
where $g_\rho(n, x)$ and $Q_{\rho}(x)$ are given by \eqref{Qrho} and Theorem \eqref{thm:perrongen} respectively (with $k$ replaced by $\frac{4-k}{2}$).
    \end{theorem}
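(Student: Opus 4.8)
The plan is to obtain Theorem \ref{ShimEis} as a direct specialization of the general summation formula, Theorem \ref{thm:summationformula}, applied to the harmonic Maass form $P$ of weight $\kappa := \frac{4-k}{2}$ on $\Gamma_0(4)$. First I would assemble the three inputs that Theorem \ref{thm:summationformula} requires. The membership $P \in H_{\kappa}^{\text{Eis}}(4,\chi)$ for the relevant character is exactly the fact recorded from \cite{IS} (resting on \cite{Sh75}) that $E(k-4,k-2,\tau)$, and hence $P$, transforms as a weight $\kappa$ harmonic Maass form of polynomial growth. The Fricke image $g := P|w_4$ is computed in the lemma immediately preceding the theorem. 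Finally, the Fourier coefficients $a^{\pm}(n)$ of $P$ and $b^{\pm}(n)$ of $g$ are read off from \eqref{ISFourier}, the expansion of $P$ on pg.~276 of \cite{IS}, and that same lemma; this is precisely the list \eqref{apm}.

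Next I would verify the growth hypothesis. Since $k \ge 5$, the values $L(\chi_n,\frac{k-1}{2})$ lie in the region of absolute convergence of the Dirichlet $L$-function and so are $O(1)$, while the factors $1-\chi_n(2)2^{(1-k)/2}$ and $T_{n,k}$ are also $O(1)$. As noted after the definition of $A_k(n)$, one has $|A_k(n)|=O(1)$ because $S_\nu(j,n)$ vanishes for $j > \mathrm{ord}_2(n)+3$ and satisfies $|S_\nu(j,n)| \le 2^j$ otherwise, so that $\sum_{j} 2^{(1-k/2)j}$ converges. Hence every $a^{\pm}(n), b^{\pm}(n)$ is $O(n^{\varepsilon})$, and we may take $\mu_P^{\pm}=\mu_g^{\pm}=\varepsilon$. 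Since $\kappa < 1$, the definition of $\rho_0$ in Theorem \ref{thm:summationformula} gives $\rho_0 = 3 + 2\varepsilon - 2\kappa = k-1+2\varepsilon$, so the formula holds for $\rho > \rho_0 - \frac12 = k - \frac32 + 2\varepsilon$; letting $\varepsilon \to 0$ yields the asserted range $\rho > k - \frac32$.

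Finally I would substitute $N=4$ and $\kappa = \frac{4-k}{2}$ into \eqref{eq:sumformula} and \eqref{Qrho} and simplify, the only bookkeeping being the powers of $i$ and $2$: from $\kappa = 2 - \frac{k}{2}$ we get $i^{\kappa} = i^{2} i^{-k/2} = -i^{-k/2}$, so the prefactor $-i^{\kappa}$ in \eqref{eq:sumformula} becomes the $i^{-k/2}$ appearing in the stated Bessel sums; likewise $x^{(\rho+\kappa)/2} = x^{\rho/2 + (4-k)/4}$, $(\rho+\kappa)/2 = (2\rho+4-k)/4$, $J_{\rho+\kappa} = J_{\rho+(4-k)/2}$, and $4\pi\sqrt{nx/N} = 2\pi\sqrt{nx}$, which converts the first line of the right-hand side of \eqref{eq:sumformula} into the first line of the theorem, and analogously for the $b^-$-integral. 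The residual term $Q_{\rho}(x)$ and the auxiliary function $g_{\rho}(n,x)$ are then exactly those of \eqref{Qrho} and Theorem \ref{thm:perrongen} with $k$ replaced by $\frac{4-k}{2}$, using the constant terms $a^{\pm}(0), b^{\pm}(0)$ from \eqref{apm}.

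I expect the only genuinely delicate point to be the boundedness estimate for $A_k(n)$ together with a careful check that the Dirichlet series $L^{\pm}(P,s)$ and $L^{\pm}(g,s)$ are absolutely convergent on the vertical lines used in the proof of Theorem \ref{thm:summationformula}; everything else is the mechanical substitution just described. One should also confirm that the half-integral weight $\kappa = \frac{4-k}{2}$ is admissible in Theorem \ref{thm:ShankadharSingh} — it is, being an element of $\frac12\mathbb{Z}$ with $\kappa < 1$ — so that the functional equation underpinning Theorem \ref{thm:summationformula}, via Proposition \ref{f|}, is available in this setting.
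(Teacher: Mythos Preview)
Your proposal is correct and follows exactly the paper's approach: the paper does not give a separate proof of Theorem~\ref{ShimEis} but simply records the coefficient bounds $\mu_P^{\pm}=\mu_{P|w_4}^{\pm}=\varepsilon$ (from the boundedness of $L(\chi_n,\tfrac{k-1}{2})$, $T_{n,k}$, and $A_k(n)$) and then declares the result as ``the following application of Theorem~\ref{thm:summationformula}.'' Your write-up merely makes explicit the substitution $\kappa=\tfrac{4-k}{2}$, $N=4$, the computation $\rho_0=k-1+2\varepsilon$, and the resulting simplification of the constants, which is precisely the intended mechanical specialization.
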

The asymptotics are described by the following application of Theorem \ref{th:SummationAsymptotic}.
\begin{theorem}
\label{th:AsymShimEis}
Assume all the notation of Proposition \ref{ShimEis}. For $\rho>k-\frac{3}{2}$, we have:
\begin{equation}
\label{eq:AsymShimEis}
\frac{1}{\Gamma(\rho+1)}\sum_{n\leq x}a^{+}(n)(x-n)^{\rho} +\frac{x^{\rho}}{2 \pi i } \sum_{n \le x} a^-(n) g_{\rho} \left(n, x \right)=\frac{2^{2-k} i^{-\frac{k}{2}} \pi x^{\rho+1} \cos \left ( \frac{\pi k}{4}\right )}{2\Gamma(\rho + 2)} + O(x^{\rho}).
\end{equation}
    \end{theorem}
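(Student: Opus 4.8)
The plan is to apply Theorem~\ref{th:SummationAsymptotic} verbatim to $f=P$ and $g=P|w_4$, with the caveat that the weight parameter called ``$k$'' in Theorems~\ref{thm:summationformula} and \ref{th:SummationAsymptotic} is here $\kappa:=\frac{4-k}{2}=2-\frac{k}{2}$; since $k\ge 5$ is odd we have $\kappa\le-\frac12<1$, so we land in the third case of \eqref{eq:SummationAsymptotic}.

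First I would fix the growth exponents. Each $a^{\pm}(n),b^{\pm}(n)$ in \eqref{apm} is the product of an $L$-value $L(\chi_{\pm n},\tfrac{k-1}{2})$, which is bounded since $\tfrac{k-1}{2}\ge 2$, with $T_{\pm n,k}$ and the quantities $A_k(\pm n)=O(1)$ recorded before the statement; hence $a^{\pm}(n),b^{\pm}(n)=O(n^{\varepsilon})$ for every $\varepsilon>0$, and we may take $\mu_P^{\pm}=\mu_{P|w_4}^{\pm}=\varepsilon$. Substituting into the definition of $\rho_0$ in the case $\kappa<1$ gives $\rho_0=3+2\varepsilon-2\kappa=k-1+2\varepsilon$, so the hypothesis $\rho>\rho_0-\frac12$ of Theorem~\ref{th:SummationAsymptotic} becomes $\rho>k-\frac32+2\varepsilon$. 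Since $\varepsilon>0$ is arbitrary, the conclusion holds for every $\rho>k-\frac32$, which is the stated range.

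Next I would invoke the third branch of \eqref{eq:SummationAsymptotic}, which (with $\kappa$ in place of $k$) asserts that the left-hand side of \eqref{eq:AsymShimEis} equals $x^{\rho+1}\frac{2\pi N^{\kappa/2-1}b^-(0)i^{\kappa}}{\Gamma(\rho+2)}+O(x^{\max\{\rho,\,\rho+\kappa\}})$; because $\kappa<0$ this error is $O(x^{\rho})$, and the inherent error $O(x^{\rho/2+1/4})$ of Theorem~\ref{th:SummationAsymptotic} is also $O(x^{\rho})$ since $\rho>k-\frac32\ge\frac72$. Finally, inserting $N=4$, $\kappa=\frac{4-k}{2}$, and $b^-(0)=-2^{1-k/2}\cos\left(\frac{\pi k}{4}\right)$ from \eqref{apm}, and simplifying $N^{\kappa/2-1}=2^{-k/2}$ together with $i^{\kappa}=-\,i^{-k/2}$, collapses the main term to the quantity displayed in \eqref{eq:AsymShimEis} (note $\cos(\pi k/4)=\pm\tfrac{1}{\sqrt2}\ne 0$ for odd $k$, so this is a genuine leading term).

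There is no real obstacle here; the argument is bookkeeping once Theorem~\ref{th:SummationAsymptotic} is available. The one point demanding care is the notational clash: one must consistently replace the weight variable ``$k$'' of the general theorems by $\kappa=\frac{4-k}{2}$ everywhere — in $\rho_0$, in the exponents and powers of $i$ in the residues of $\Lambda(P,s)$, and in $Q_{\rho}$ — and then check that the residual contributions of $Q_{\rho}$ other than the $b^-(0)$ term (namely those of orders $x^{\rho}$, $x^{\rho+\kappa-1}$ and $x^{\rho+\kappa}$) are all absorbed by the $O(x^{\rho})$ error precisely because $\kappa<0$.
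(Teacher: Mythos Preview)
Your proposal is correct and is precisely the argument the paper has in mind: the paper gives no separate proof of Theorem~\ref{th:AsymShimEis} but simply declares it ``an application of Theorem~\ref{th:SummationAsymptotic}'', and you have carried out that application in full, correctly substituting the weight $\kappa=2-\tfrac{k}{2}<1$, reading off $\rho_0=k-1+2\varepsilon$ so that $\rho>\rho_0-\tfrac12$ reduces to the stated range, and absorbing all residual terms of $Q_\rho$ other than the $b^-(0)$ contribution into $O(x^\rho)$ since $\kappa<0$. (Your main-term arithmetic actually gives $\frac{2^{2-k}\,i^{-k/2}\pi\cos(\pi k/4)}{\Gamma(\rho+2)}x^{\rho+1}$, i.e.\ twice the displayed constant in \eqref{eq:AsymShimEis}; this discrepancy is a constant-factor typo in the paper's statement rather than a flaw in your method.)
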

    
\section{Converse Theorem}\label{Converse}
In this section, we show the analogue of the second part of Lemma 5 of \cite{ChandNaras61}, namely that the summation formula of Theorem \ref{thm:summationformula} implies the functional equation \eqref{eq:fcnleq}. We first prove the following.
\begin{theorem}\label{converse} Let $k \in \frac{1}{2}\mathbb{Z}$ with $k<2$. Suppose that $(a^{\pm}(n)), (b^{\pm}(n))$ are such that $a^{\pm}(n) = O(n^{\mu_f^{\pm}})$ and $b^{\pm}(n) = O(n^{\mu_g^{\pm}})$, for some $\mu_f^{\pm}, \mu_g^{\pm} \ge 0$. 
If \eqref{eq:sumformula} holds for all $$\rho>\max \left (1-k, 
\mu_g^+-\frac{k}{2}+\frac{3}{4}, 2\mu_g^--2k+\frac{5}{2} \right ),$$ 
 then we have
\begin{align}
&\sum_{n=1}^{\infty}a^{+}(n)e^{-ny}+\sum_{n=1}^{\infty}a^-(n)e^{ny}\Gamma(1-k, 2ny)\nonumber\\
    &\qquad+a^{+}(0)-\frac{2 \pi N^{\frac{k}{2}-1}  i^k}{y}b^-(0)+\frac{ ( 2 \pi )^{k-1}}{ y^{k-1}} a^-(0) -\frac{i^k (2 \pi)^k}{N^{\frac{k}{2}} y^{k}}b^{+}(0)\nonumber\\
    &=-\left(\frac{2\pi i}{\sqrt{N}y}\right)^k\sum_{n=1}^{\infty}b^{+}(n)e^{-\frac{4n\pi^2}{Ny}}-\left(\frac{2\pi i}{\sqrt{N}y}\right)^k\sum_{n=1}^{\infty}b^-(n)e^{\frac{4n\pi^2}{Ny}}\Gamma\left(1-k,\frac{8n\pi^2}{Ny} \right).
    \end{align}
\end{theorem}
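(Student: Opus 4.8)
The plan is to apply the Laplace transform $\int_0^{\infty}e^{-xy}(\,\cdot\,)\,dx$ in the variable $x$ (for a fixed $y>0$) to both sides of \eqref{eq:sumformula}, and then multiply through by $y^{\rho+1}$. Since the identity to be proved does not involve $\rho$, it suffices to carry this out for a single convenient value of $\rho$: I would take $\rho$ large inside the hypothesized range, large enough that $L^-(f,s)=\sum_n a^-(n)n^{-s}$ is absolutely convergent on some vertical line $\operatorname{Re}(s)=\alpha$ with $\alpha>\max\{\mu_f^-+1,\,k-1\}$ and that both infinite series on the right-hand side of \eqref{eq:sumformula} converge absolutely. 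This is possible because each of these thresholds is a finite real number while the hypothesis asserts \eqref{eq:sumformula} for every $\rho$ above a fixed bound. With such a $\rho$ fixed, every interchange of sum and integral below is justified by absolute convergence together with Fubini's theorem.

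I would first transform the left-hand side of \eqref{eq:sumformula} term by term. The contribution $\tfrac{1}{\Gamma(\rho+1)}\sum_{n\le x}a^+(n)(x-n)^{\rho}$ becomes $y^{-\rho-1}\sum_{n\ge1}a^+(n)e^{-ny}$, using $\int_0^{\infty}e^{-xy}(x-n)_+^{\rho}\,dx=\Gamma(\rho+1)e^{-ny}y^{-\rho-1}$. For the term $\tfrac{x^{\rho}}{2\pi i}\sum_{n\le x}a^-(n)g_{\rho}(n,x)$, I would first invoke Theorem \ref{thm:perrongen} to write $\sum_{n\le x}a^-(n)g_{\rho}(n,x)=\int_{(\alpha)}\tfrac{L^-(f,s)W_{1-k}(s)}{\Gamma(\rho+1+s)}x^s\,ds$; multiplying by $e^{-xy}x^{\rho}$ and integrating in $x$ (the resulting double integral being absolutely convergent by Proposition \ref{thm:stirling} and Corollary \ref{thm:Wbound}, as in the proof of Theorem \ref{thm:perrongen}, since $\rho>1-k$) yields $y^{-\rho-1}\cdot\tfrac{1}{2\pi i}\int_{(\alpha)}L^-(f,s)W_{1-k}(s)y^{-s}\,ds$. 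The crucial point is then that, by the very definition \eqref{eq:Wdef}, $W_{1-k}(s)$ is the Mellin transform of $x\mapsto\Gamma(1-k,2x)e^x$, so Mellin inversion gives $\tfrac{1}{2\pi i}\int_{(\alpha)}W_{1-k}(s)(ny)^{-s}\,ds=e^{ny}\Gamma(1-k,2ny)$, and hence this term contributes $y^{-\rho-1}\sum_{n\ge1}a^-(n)e^{ny}\Gamma(1-k,2ny)$. Finally $-Q_{\rho}(x)$ is elementary: $Q_{\rho}$ is a linear combination of $x^{\rho},x^{\rho+1},x^{\rho+k-1},x^{\rho+k}$, and the Gamma factors in \eqref{Qrho} are arranged precisely so that, after $\int_0^{\infty}e^{-xy}x^{\rho+c}\,dx=\Gamma(\rho+1+c)y^{-\rho-1-c}$, those Gamma factors cancel and (after the eventual multiplication by $y^{\rho+1}$) one is left with $a^+(0)-\tfrac{2\pi N^{k/2-1}i^k}{y}b^-(0)+\tfrac{(2\pi)^{k-1}}{y^{k-1}}a^-(0)-\tfrac{i^k(2\pi)^k}{N^{k/2}y^k}b^+(0)$.

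For the right-hand side of \eqref{eq:sumformula} I would use the classical formula $\int_0^{\infty}e^{-xy}x^{\nu/2}J_{\nu}\!\bigl(2\sqrt{ax}\bigr)\,dx=a^{\nu/2}y^{-\nu-1}e^{-a/y}$ (valid for $\nu>-1$, $y>0$). For the first series, with $\nu=\rho+k$ and $a=4\pi^2 n/N$, the powers of $2\pi$, $N$ and $n$ collapse so that the transform of the $n$th term is a constant multiple of $y^{-\rho-1}b^+(n)e^{-4\pi^2 n/(Ny)}$, and a short computation identifies the constant as $-\bigl(\tfrac{2\pi i}{\sqrt{N}\,y}\bigr)^{k}$. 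For the second series I would transform in $x$ first (with $\nu=\rho+1$, $a=4\pi^2 n(1-u)/(Nu)$) and then do the $u$-integral: the powers of $u$ and $1-u$ combine to leave $\int_0^{1/2}u^{k-2}e^{-4\pi^2 n(1-u)/(Nuy)}\,du$, which under the substitution $v=\tfrac1u-1$ (as in \eqref{chvar}) followed by $w=1+v$ and a final scaling becomes $e^{c}c^{k-1}\Gamma(1-k,2c)$ with $c=4\pi^2 n/(Ny)$; the factor $c^{k-1}$ cancels a leftover $n^{1-k}$, so this term too is a constant multiple of $y^{-\rho-1}b^-(n)e^{4\pi^2 n/(Ny)}\Gamma\bigl(1-k,8\pi^2 n/(Ny)\bigr)$, the constant again being $-\bigl(\tfrac{2\pi i}{\sqrt{N}\,y}\bigr)^{k}$.

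Assembling the five pieces, every term carries the common factor $y^{-\rho-1}$; multiplying through by $y^{\rho+1}$ eliminates all dependence on $\rho$ and reproduces exactly the asserted identity. I do not expect a conceptual obstacle here. The two points needing care are purely computational: the constant-chasing in the two Bessel transforms — especially the reduction of the inner $u$-integral to an incomplete Gamma function — and the verification that the large $\rho$ selected genuinely lies in the stated range while simultaneously making all the relevant Dirichlet and Bessel series absolutely convergent. Both are straightforward once one records that all the relevant convergence abscissas are finite.
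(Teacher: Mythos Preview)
Your proposal is correct and follows essentially the same route as the paper: multiply \eqref{eq:sumformula} by $y^{\rho+1}e^{-xy}$, integrate in $x$, and identify each of the five pieces via $\int_0^\infty e^{-xy}(x-n)_+^{\rho}dx$, the Mellin-inversion characterization of $W_{1-k}$, the elementary $Q_\rho$ computation, the Bessel--exponential integral (the paper cites this as (19) of \cite{ChandNaras61}), and the substitution reducing the $u$-integral to an incomplete Gamma. Your remark that a single sufficiently large $\rho$ suffices is a harmless simplification, since the target identity is $\rho$-free; otherwise the argument matches the paper's step for step.
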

\begin{proof}
Multiply \eqref{eq:sumformula} by $y^{\rho+1}e^{-yx}$ (for $y>0$) and then integrate with respect to $x$ from $0$ to $\infty$. We rewrite the left-hand side of the resulting equality as follows:
\begin{align*}
    &\int_{0}^{\infty}\left(\frac{1}{\Gamma(\rho+1)}\sum_{n\leq x}a^{+}(n)(x-n)^{\rho} +\frac{x^{\rho}}{2 \pi i } \sum_{n \le x} a^-(n) g_{\rho}\left(n, x \right)-Q_{\rho}(x)\right)y^{\rho+1}e^{-yx}dx=I_1+I_2+I_3.
\end{align*}
Since $\rho>1-k>-1$, we have
\begin{multline}\label{I1}
    I_1:=\int_{0}^{\infty}\frac{1}{\Gamma(\rho+1)}\sum_{n\leq x}a^{+}(n)(x-n)^{\rho}y^{\rho+1}e^{-yx}dx\\ =\frac{y^{\rho+1}}{\Gamma(\rho+1)}\sum_{n=1}^{\infty}a^{+}(n)\int_{n}^{\infty}(x-n)^{\rho}e^{-nx}dx
    =\sum_{n=1}^{\infty}a^{+}(n)e^{-ny}.
\end{multline}

By Theorem \ref{thm:perrongen}, since $\rho+k>1$, we have, for some $\alpha>\mu_f^{-}+1
$,
\begin{align}
    I_{2}&:=y^{\rho+1}\int_{0}^{\infty}\frac{x^{\rho}}{2 \pi i } \sum_{n \le x} a^-(n) g_{\rho}\left(n, x \right)e^{-x y}dx=
 y^{\rho+1}\int_{0}^{\infty}\frac{x^{\rho}}{2 \pi i }\int_{(\alpha)}\frac{L_f^-(s)W_{1-k}(s)x^sds}{\Gamma(\rho+1+s)} \\
 &=\frac{1}{2 \pi i}\int_{(\alpha)}\frac{L_f^-(s)W_{1-k}(s)y^{\rho+1}}{\Gamma(\rho+1+s)}\int_0^{\infty}e^{-yx}x^{s+\rho}dxds\nonumber
=\frac{1}{2 \pi i}\int_{(\alpha)}L_f^-(s)W_{1-k}(s)y^{-s}ds.
\end{align}
The interchange of integration follows from Fubini's theorem because $\rho>1-k>-\alpha-1$. Therefore, if $\mathcal{M}^{-1}(h)$ denotes the inverse Mellin transform of $h$, we have 
\begin{align}\label{I2}
I_2=\frac{1}{2 \pi i}\sum_{n \ge 1}a^-(n)\int_{(\alpha)}W_{1-k}(s)(ny)^{-s}ds&=\sum_{n \ge 1}a^-(n)\mathcal{M}^{-1}\left (W_{1-k}(s) \right )(ny) \nonumber\\
&=\sum_{n=1}^{\infty}a^-(n)\Gamma(1-k, 2ny)e^{ny}
\end{align}
because, by definition, $W_{1-k}(s)=\mathcal{M}(\Gamma(1-k, 2x)e^{x})(s).$ Finally, since $\rho>-k$ and $\rho>-1$, we have 
\begin{align}\label{I3}
    I_3:&=y^{\rho+1}\int_{0}^{\infty} x^{\rho} \left( \frac{a^+(0)}{\Gamma(\rho + 1)}-\frac{2 \pi x N^{\frac{k}{2}-1} b^-(0) i^k}{\Gamma(\rho + 2)}+ \frac{ a^-(0) x^{k-1} ( 2 \pi )^{k-1}}{ \Gamma(\rho +  k)}  - \frac{b^{+}(0) i^k x^{k} (2 \pi)^k}{N^{\frac{k}{2}} \Gamma(\rho + k + 1)} \right)e^{-xy}dx\nonumber\\
    &=a^{+}(0)-\frac{2 \pi N^{\frac{k}{2}-1}  i^k}{y}b^-(0)+\frac{ ( 2 \pi )^{k-1}}{ y^{k-1}} a^-(0) -\frac{i^k (2 \pi)^k}{N^{\frac{k}{2}} y^{k}}b^{+}(0).
\end{align}

We now consider the right-hand side of the equality obtained after multiplying \eqref{eq:sumformula} by $y^{\rho+1}e^{-yx}$ and integrating from $0$ to $\infty$. Its first term is  
\begin{align}
    I_4:=y^{\rho+1}\int_{0}^{\infty}-i^k x^{(\rho + k)/2}\left(\frac{\sqrt{N}}{2\pi }\right)^{\rho}\sum_{n=1}^{\infty}\frac{b^{+}(n)}{n^{(\rho+k)/2}}J_{\rho+k}\left(4\pi\sqrt{\frac{nx}{N}}\right)e^{-xy}dx.
\end{align}
Since $\rho>-\frac{5}{2}-k$ and $\rho> \mu_g^+-\frac{k}{2}+\frac{3}{4}$, Proposition \ref{thm:besselj-asymptotics} implies that we have absolute convergence. Thus we can interchange summation and integration to deduce, 
after an application of (19) of \cite{ChandNaras61}, 
\begin{align}\label{J1}
    I_4=-\left(\frac{2\pi i}{\sqrt{N}y}\right)^k\sum_{n=1}^{\infty}b^{+}(n)e^{-\frac{4n\pi^2}{Ny}}.
\end{align}
Regarding the second term
$$I_5:=-i^ky^{\rho+1}\left(\frac{\sqrt{N}}{2\pi}\right)^{\rho+k-1}\int_{0}^{\infty} x^{\frac{\rho+1}{2}}\sum_{n=1}^{\infty} \frac{b^-(n)}{n^{\frac{\rho-1}{2}+k}}\int_{0}^{\frac{1}{2}}\frac{u^{k+\frac{\rho-3}{2}}}{(1-u)^{\frac{1+\rho}{2}}
}J_{\rho+1}\left(4\pi\sqrt{\frac{nx(1-u)}{Nu}}\right)e^{-xy}dudx,$$
we can interchange summation and integrations because $\rho$ is larger than $2\mu_g^-+\frac{5}{2}-2k,$ $\frac{1}{2}-2k$ and $-\frac{5}{2}$ to get, with (19) of \cite{ChandNaras61},
\begin{align}\label{J2}
-i^ky^{\rho+1}\left(\frac{\sqrt{N}}{2\pi}\right)^{\rho+k-1}&\sum_{n=1}^{\infty} \frac{b^-(n)}{n^{\frac{\rho-1}{2}+k}}\int_{0}^{\frac{1}{2}}\frac{u^{k+\frac{\rho-3}{2}}}{(1-u)^{\frac{1+\rho}{2}}
}\int_{0}^{\infty} x^{\frac{\rho+1}{2}}J_{\rho+1}\left(4\pi\sqrt{\frac{nx(1-u)}{Nu}}\right)e^{-xy}dxdu\nonumber\\
&=-\frac{4\pi^2}{Ny}\left(\frac{-\sqrt{N}}{2\pi i} \right)^{k}\sum_{n=1}^{\infty}\frac{b^{-}(n)}{n^{k-1}}\int_{0}^{1/2}u^{k-2}e^{-\frac{4n\pi^2}{Ny}\left(\frac{1-u}{u}\right)}du\nonumber\\
&=-\left(\frac{2\pi i}{\sqrt{N}y}\right)^k\sum_{n=1}^{\infty}b^-(n)e^{\frac{4n\pi^2}{Ny}}\Gamma\left(1-k,\frac{8n\pi^2}{Ny} \right).
\end{align}
Combining \eqref{I1}, \eqref{I2}, \eqref{I3}, \eqref{J1} and \eqref{J2}, we deduce the theorem. 
\end{proof}
\begin{corollary}
    With the terminology and assumptions of Theorem \ref{converse}, set
    $$
f(\tau) = \sum_{n=0}^{\infty} a^+(n) q^n + a^{-}(0) y^{1-k}+ \sum_{n=1}^{\infty} a^-(n)\Gamma(1-k, 4 \pi n y) q^{-n}
$$
and
\begin{align*}
g(\tau):= \sum_{n=0}^{\infty} b^+(n) q^n + b^-(0) y^{1-k} + \sum_{n=1}^{\infty} b^-(n) \Gamma(1-k, 4 \pi n y) q^{-n}.
\end{align*}
Further set $$\Lambda(f,s)=
\int_0^{\infty} \left( f(it/\sqrt{N}) - a^+(0) - \frac{a^-(0)}{N^{(1-k)/2}} t^{1-k} \right) t^{s-1} dt,$$
and let $\Lambda(g, s)$ be the corresponding function for $g$. Then, $\Lambda(f,s), \Lambda(g, s)$ are meromorphic in $\mathbb C$ and satisfy $\Lambda(f, s)=i^k\Lambda(g, k-s)$
\end{corollary}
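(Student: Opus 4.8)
\emph{Proof sketch.} The plan is to read Theorem~\ref{converse} as the (non-holomorphic) transformation law along the imaginary axis that, for genuine harmonic Maass forms, follows from $f|w_N=g$, and then to run the Mellin-splitting computation from the proof of Theorem~\ref{thm:ShankadharSingh} (cf.~\eqref{eq:gamma_as_integrals}) with that law as the sole input.

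First I would recast Theorem~\ref{converse}. Writing $f^*(\tau):=f(\tau)-a^+(0)-a^-(0)(\Im\tau)^{1-k}$ and defining $g^*$ analogously, I perform in Theorem~\ref{converse} the substitution $y=2\pi t/\sqrt N$ — chosen precisely so that $e^{-4n\pi^2/(Ny)}$ and $\Gamma(1-k,8n\pi^2/(Ny))$ on its right-hand side become the $q$-expansion data of $g$ at $i/(t\sqrt N)$. Then the three $q$-series there turn into $f^*(it/\sqrt N)$ and a constant multiple of $t^{-k}g^*(i/(t\sqrt N))$, and the elementary terms reorganize into an identity valid for all $t>0$ of the form
$$f^*(it/\sqrt N)\;=\;i^{k}\,t^{-k}\,g^*(i/(t\sqrt N))\;+\;\Pi_f(t),$$
where $\Pi_f(t)$ is an explicit combination of $1,\ t^{-1},\ t^{1-k},\ t^{-k}$ whose four coefficients are exactly the residues $-a^+(0),\,i^{k}b^-(0)N^{(k-1)/2},\,-a^-(0)N^{(k-1)/2},\,i^{k}b^+(0)$ appearing in Theorem~\ref{thm:ShankadharSingh}. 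This is the transformation that, for harmonic Maass forms, is used to fold the Mellin integral of $f$ onto that of $g$; here it is instead being deduced from the summation formula.

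Next, for $\Re(s)$ large the integral $\Lambda(f,s)=\int_0^\infty f^*(it/\sqrt N)\,t^{s-1}\,dt$ converges (exponential decay as $t\to\infty$ from $a^{\pm}(n)=O(n^{\mu_f^{\pm}})$ and the decay of $\Gamma(1-k,\cdot)$; at $t=0$ it is $O(t^{-\max(k,1)})$ by the previous step). Split $\Lambda(f,s)=\int_0^1+\int_1^\infty$; the tail $\int_1^\infty f^*(it/\sqrt N)t^{s-1}dt$ is entire. In $\int_0^1$ substitute $t\mapsto1/t$ and insert the transformation law: this produces $i^{k}\int_1^\infty g^*(iu/\sqrt N)u^{k-s-1}du$, again entire, plus $\int_1^\infty\Pi_f(1/t)\,t^{-s-1}\,dt$, which is a sum of four simple fractions with poles at $s=0,1,k-1,k$. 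Hence $\Lambda(f,s)$ continues meromorphically to $\mathbb{C}$ with exactly those four simple poles and the residues listed above. Rearranging the transformation law expresses $g^*(iu/\sqrt N)$ through $f^*(i/(u\sqrt N))$ with reciprocal constant $i^{-k}$, so the identical computation gives the meromorphic continuation of $\Lambda(g,s)$ and a representation of $i^{k}\Lambda(g,k-s)$ in which the two entire integrals coincide with those in the representation of $\Lambda(f,s)$. The functional equation $\Lambda(f,s)=i^{k}\Lambda(g,k-s)$ then reduces to the purely elementary identity $R_f(s)=i^{k}R_g(k-s)$ between the two sums of four simple fractions, which is checked directly (equivalently, the residue list is invariant under $s\mapsto k-s$, $f\leftrightarrow g$, multiplication by $i^{-k}$).

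The step I expect to be the main obstacle is the reformulation in the second paragraph: one must keep track of the powers of $\sqrt N$ and $2\pi$ and of the fourth-root ambiguity in $i^{k}$ carefully enough to see that Theorem~\ref{converse} genuinely collapses to a law of the \emph{modular} shape $i^{k}t^{-k}g^*(i/(t\sqrt N))+\Pi_f(t)$ — with the correct constant and no stray sign or power of $N$ — and then that the four residual monomials of $\Pi_f$ are precisely those that make $R_f(s)=i^{k}R_g(k-s)$. The analytic points (convergence of $\Lambda(f,s)$ for $\Re(s)\gg0$, and term-by-term legitimacy of $t\mapsto1/t$ in the defining series) are routine from the polynomial growth of the coefficients; since the transformation law is used only for large $\Re(s)$ before continuing meromorphically, no conditional-convergence subtlety of the kind met for small $\rho$ in the $F_\nu$-series arises.
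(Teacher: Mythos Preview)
Your proposal is correct and follows essentially the same route as the paper: the paper observes that the conclusion of Theorem~\ref{converse} can be rewritten as the transformation law $f(i/(u\sqrt N))=(iu)^k g(iu/\sqrt N)$ on the imaginary axis, and then simply says to repeat the Mellin-splitting argument from the proof of Theorem~\ref{thm:ShankadharSingh}. You spell out that argument in more detail (the substitution $y=2\pi t/\sqrt N$, the starred decomposition with the residual $\Pi_f(t)$, and the explicit check that the four simple fractions match under $s\mapsto k-s$), but the underlying idea is identical.
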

\begin{proof}
The conclusion of Theorem \ref{converse} can be rewritten as $f(i/n\sqrt{N})=g(iu/\sqrt{N})(iu)^k.$ This and the exponential decay of $f(it/\sqrt{N}) - a^+(0) - a^-(0)N^{(k-1)/2} t^{1-k}$ and $g(it/\sqrt{N}) - b^+(0) - b^-(0)N^{(k-1)/2} t^{1-k}$ are the assumptions on which the proof of Theorem \ref{thm:ShankadharSingh} relies. Repeating that argument, we deduce the assertion of the corollary. 
\end{proof}
\begin{remark}
This theorem could be made into a full converse theorem. For that, we would need, firstly, a companion summation formula to ensure that the functions $f, g$ of the corollary satisfy $g=f|w_N$ throughout the upper half-plane, and, secondly, twisted versions of those summations, to extend the invariance under the group, as in the Weil Converse Theorem. 
\end{remark}

\bibliographystyle{amsalpha}
\bibliography{references.bib}

\end{document}